\newcommand{\hl}[1]{{\mathbf{\textcolor{blue}{#1}}}}
\newcommand{\Rf}{\mathfrak{R}}
\newcommand{\ram}{\mathfrak{r}}
\newcommand{\ZZ}{\mathbb{Z}}
\newcommand{\KK}{\mathbb{K}}
\newcommand{\PP}{\mathbb{P}}
\newcommand{\NN}{\mathbb{N}}
\newcommand{\CO}{\mathcal{O}}
\newcommand{\val}{\mathfrak{v}}
\newcommand{\mcL}{\mathcal{L}}
\newcommand{\F}{\mathcal{F}}
\newcommand{\mfm}{\mathfrak{m}}
\newcommand{\mfS}{\mathfrak{S}}
\DeclareMathOperator{\codim}{codim}
\DeclareMathOperator{\proj}{Proj}
\DeclareMathOperator{\ord}{ord}
\newtheorem{thm}{Theorem}[section]
\newtheorem{prop}[thm]{Proposition}
\newtheorem{lemma}[thm]{Lemma}
\newtheorem{cor}[thm]{Corollary}
\theoremstyle{definition}
\newtheorem{defn}[thm]{Definition}
\newtheorem{rem}[thm]{Remark}
\newtheorem{ex}[thm]{Example}
\newtheorem{con}[thm]{Construction}
\tikzset{
  treenode/.style = {shape=rectangle, rounded corners,
                     draw, align=center,
                     top color=white, bottom color=blue!20},
  root/.style     = {treenode, font=\Large, bottom color=red!30},
  env/.style      = {treenode, font=\ttfamily\normalsize},
  dummy/.style    = {circle,draw}
}
\newcommand{\tb}{
$\overbrace{\begin{ytableau}
\ &\none &\none[\cdots]&\none &\ &\ &\\
\end{ytableau}}^n
$}
\newcommand{\tbc}{
$\overbrace{\begin{ytableau}
\ &\none &\none[\cdots]&\none &\ &\ &\\
\ &\none &\none[\cdots]&\none &\ &\ &\\
\end{ytableau}}^n
$}
\newcommand{\tbcd}{
$\overbrace{\begin{ytableau}
\ &\none &\none[\cdots]&\none &\ &\ &\\
\ &\none &\none[\cdots]&\none &\ &\ &\\
\ &\none &\none[\cdots]&\none &\ &\ &\\
\end{ytableau}}^n
$}
\newcommand{\tbd}{
$\overbrace{\begin{ytableau}
\ &\none &\none[\cdots]&\none &\ &\ &\\
\ &\none &\none[\cdots]&\none &\ &\ \\
\end{ytableau}}^n
$}
\newcommand{\tbce}{
$\overbrace{\begin{ytableau}
\ &\none &\none[\cdots]&\none &\ &\ &\\
\ &\none &\none[\cdots]&\none &\ &\ &\\
\ &\none &\none[\cdots]&\none &\ &\ \\
\end{ytableau}}^n
$}
\newcommand{\tbcf}{
$\overbrace{\begin{ytableau}
\ &\none &\none[\cdots]&\none &\ &\ &\\
\ &\none &\none[\cdots]&\none &\ &\ &\\
\ &\none &\none[\cdots]&\none &\  \\
\end{ytableau}}^n
$}
\title[Singular Curves of Low Degree]{Singular Curves of Low Degree and Multifiltrations from Osculating Spaces}
\author{Jaros{\l}aw Buczy{\'n}ski}
\address{Institute of Mathematics of the Polish Academy of Sciences, ul.~{\'S}niadeckich 8, \mbox{00-656}~Warsaw, Poland}
\email{jabu@mimuw.edu.pl}
\author{Nathan Ilten}
\address{Department of Mathematics, Simon Fraser University,
8888 University Drive, Burnaby BC V5A1S6, Canada}
\email{nilten@sfu.ca}
\author{Emanuele Ventura}
\address{Department of Mathematics, Texas A\&M University, College Station, TX~\mbox{77843-3368}, USA}
\email{eventura@math.tamu.edu, emanueleventura.sw@gmail.com}
\begin{document}
\begin{abstract}
In order to study projections of smooth curves, we introduce multifiltrations obtained by combining flags of osculating spaces.
We classify all configurations of singularities occurring for a projection of a smooth curve embedded by a complete linear system away from a projective linear space of dimension at most two. In particular, we determine all configurations of singularities of non-degenerate degree $d$ rational curves in $\PP^n$ when $d-n\leq 3$ and $d<2n$. Along the way, we describe the Schubert cycles giving rise to these projections. 

We also reprove a special case of the Castelnuovo bound using these multifiltrations: under the assumption $d<2n$, the arithmetic genus of any non-degenerate degree $d$ curve in $\PP^n$ is at most $d-n$.
\end{abstract}
\keywords{Projective curves, curve singularities, gap function, multifiltrations, osculating space, Schubert variety}
\subjclass[2010]{Primary: 14H20; Secondary: 14N15, 14H50, 14C20}
\maketitle

\section{Introduction}
\subsection{Motivation and Background}
How bad can the singularities of a curve $C$ of degree $d$ in $\PP^n$ be? The study of this question 
is classical. A first measure of this is given by the difference between the arithmetic genus $\rho_a$ and the geometric genus $\rho_g$ of $C$. This difference is zero if and only if the curve is smooth. Moreover, fixing the numerical invariants $d$, $n$, $\rho_a$, and $\rho_g$, one could hope to classify all the configurations of singularities that can occur.

Perhaps the simplest situation is that of plane curves. Here, the arithmetic genus of any degree $d$ plane curve $C$ is $\rho_a(C)=(d-1)(d-2)/2$. 
For degrees $d=3,4,5$, all possible configurations of singularities are classically known; see e.g.~\cite{namba}. To our knowledge, a classification for $d=6$ remains an open question, although there have been partial results in this direction \cite{conner,yang,saleem}.
Furthermore, complex planar rational curves which are homeomorphic to a two dimensional sphere (that is, they only admit singularities of cuspidal type, and no multibranched singularities) are recently shown to have at most four singular points \cite{koras_palka_at_most_4_cusps} and they are partially classified.

In higher dimensions, less is known. A common approach is to view a curve $C\subset \PP^n$ as the image of a smooth curve $X\subset \PP^m$ under a linear projection $\PP^m\dashrightarrow \PP^n$; this was used extensively by Veronese \cite{veronese}. 
More generally, any curve $C$ may be viewed as the image of a smooth curve $X$ under the map obtained from some linear series of a line bundle $\mcL$.
Important information concerning the result of such a map may be obtained by studying the intersection behaviour of the orthogonal complement of this linear series with the flags of osculating planes for the curve $X$ with respect to $\mcL$. This has been used for instance by Piene \cite{piene} to recover the generalized Pl\"ucker formulas, and by Eisenbud and Harris \cite{eh1,eh2} to study linear series on curves. 

A special case is when the curve $C$ under consideration is \emph{rational}.
Any non-degenerate rational degree $d$ curve in $\PP^n$ may be obtained from the rational normal curve $X_d$ of degree $d$ by a projection $\PP^{d}\dashrightarrow \PP^n$. Thus, one may study singularities of rational curves via projections of the rational normal curve. 

\subsection{Our Approach and Results}
Throughout the paper, we work over an algebraically closed field $\KK$. A {\it curve} is a projective one-dimensional integral scheme (irreducible and reduced, but not necessarily smooth). We usually consider a curve $C$ arising as the image of a smooth projective curve $X$ under a morphism coming from a linear series of a line bundle $\mcL$.

As noted above, the information encoded in the intersection behaviour of the orthogonal complement of the linear series with the osculating flags of $X$ is useful in understanding $C$. More precisely, it gives valuable information on the singularities of individual branches of $C$. However, it completely misses the way that these branches interact.

Our strategy is to look at the $\ZZ^r$-graded multifiltration induced by the osculating flags at the $r$ points of $X$ corresponding to the $r$ branches of a given singularity of $C$. This captures much more information, and in many cases it allows us to determine the type of a singularity. 

We apply these ideas to obtain two main results. The first is as follows:

\begin{thm}[See Corollary~\ref{thm:1new}]\label{thm:1}
Let $C\subset \PP^n$ be any non-degenerate degree $d$ curve with arithmetic genus $\rho_a$.
Assume that $d<2n$. Then $\rho_a\leq d-n$. 
\end{thm}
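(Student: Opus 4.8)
The plan is to run Castelnuovo's hyperplane-section argument and to observe that the hypothesis $d<2n$ makes it collapse almost entirely, so that the only substantive input is the statement that a general hyperplane section imposes independent conditions on \emph{quadrics}; the role of the osculating-flag multifiltrations is then to secure this input in arbitrary characteristic. First I would pass to a general hyperplane $H\subset\PP^n$ and set $\Gamma=C\cap H$, a reduced length-$d$ subscheme spanning $H\cong\PP^{n-1}$ (it spans because $C$ is non-degenerate). Writing $h_\Gamma(k)$ for the Hilbert function of $\Gamma$, the exact sequences $0\to\CO_C(k-1)\to\CO_C(k)\to\CO_\Gamma(k)\to 0$ yield $\rho_a=h^1(\CO_C)=\sum_{k\ge 1}(d-\Delta_k)$ with $\Delta_k=h^0(\CO_C(k))-h^0(\CO_C(k-1))$; since degree-$k$ forms on $\PP^n$ factor through $H^0(\CO_C(k))$ upon restriction to $\Gamma$, one has $\Delta_k\ge h_\Gamma(k)$, and hence the standard bound $\rho_a\le\sum_{k\ge 1}(d-h_\Gamma(k))$.

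Next I would exploit $d<2n$. Non-degeneracy already gives $h_\Gamma(1)=n$. If I can show that $\Gamma$ imposes independent conditions on quadrics, i.e. $h_\Gamma(2)=d$, then $h_\Gamma(k)=d$ for every $k\ge 2$ (multiply a quadric separating one point of $\Gamma$ from the rest by forms of degree $k-2$), the sum collapses to its $k=1$ term, and $\rho_a\le d-n$, as claimed. Thus everything reduces to the single statement $h_\Gamma(2)=d$. For $d\le 2n-1$ this in turn follows once $\Gamma$ is in \emph{linearly general position} (any $n$ of its points are linearly independent): given $p\in\Gamma$, split the remaining $\le 2(n-1)$ points into two sets of size $\le n-1$, each of which spans a hyperplane avoiding $p$, and take the product of the two defining linear forms to get a quadric through $\Gamma\setminus\{p\}$ but not through $p$.

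The main obstacle is precisely establishing linearly general position of $\Gamma$. In characteristic $0$ this is the classical Uniform Position Principle, but it can fail in positive characteristic, whereas the theorem is asserted over an arbitrary algebraically closed field $\KK$. This is exactly where I expect the $\ZZ^r$-graded osculating multifiltrations to do the work: rather than appealing to monodromy, one would read the relevant independence directly off the flags of osculating spaces of the underlying smooth curve $X$ along the fibres of the projection, certifying in a characteristic-free way that no subset of at most $n$ points of $\Gamma$ can become linearly dependent, and hence that no quadric condition is redundant. Carrying out this osculating-space replacement for the Uniform Position Principle is the crux of the argument, and is where I anticipate the bulk of the technical effort.
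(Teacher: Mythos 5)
Your reduction is the classical Castelnuovo argument, and it is set up correctly as far as it goes: the cohomology of $0\to\CO_C(k-1)\to\CO_C(k)\to\CO_\Gamma(k)\to 0$ does give $\rho_a\leq\sum_{k\geq 1}(d-h_\Gamma(k))$, and if $h_\Gamma(2)=d$ then the sum collapses to $d-n$. But the proposal stops exactly at the point where the mathematical content lies. You correctly identify that everything hinges on showing that the $d\leq 2n-1$ points of a general hyperplane section impose independent conditions on quadrics, and that in arbitrary characteristic this cannot be obtained from the monodromy-based Uniform Position Principle; you then assert that the osculating multifiltrations ``would'' certify linearly general position, without giving any argument. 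That is the entire theorem deferred, not proved: over a field of positive characteristic, linear general position of general hyperplane sections is a genuinely delicate statement (it is the subject of Rathmann's work, with exceptions in low characteristic), and there is the further unaddressed issue that a general hyperplane section of an integral curve need not even be reduced in positive characteristic (strange curves), which your setup assumes. So as written the proof has a genuine gap at its crux.

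For comparison, the paper does not pass to hyperplane sections at all, and in particular never needs any general-position statement. It works directly at the singular points: writing $C$ as the image of its normalization $X$ under a linear system $M=L^{\perp}$ with $\dim L=d-n-\rho_g$ (after using Clifford's theorem to rule out $h^1(\mcL)>0$), it considers the gap function $\lambda_R(\alpha)=\dim S/(R+\langle t_1^{\alpha_1},\dots\rangle)$ of the local rings at the singular points inside their normalizations. The multifiltration $\F^\alpha$ spanned by osculating subspaces at the preimages of the singular points satisfies $\dim\F^\alpha=|\alpha|$ for $|\alpha|\leq d+1-2\rho_g$, which forces $\lambda_R(\alpha)\leq\dim(L\cap\F^\alpha)\leq\dim L$ for all such small $\alpha$; the combinatorial Key Lemma (a pigeonhole argument on lattice paths in the value semigroup) then propagates the bound $\lambda_R\leq\dim L$ to all $\alpha$, giving $\sum_Q\delta(Q)=\rho_a-\rho_g\leq\dim L$. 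The hypothesis $d<2n$ enters only through the inequality $2\dim L<d-2\rho_g$, which is what makes the Key Lemma applicable. If you want to complete your route instead, you must actually prove the characteristic-free general position (or at least the independence of conditions on quadrics) statement, which is a different and substantial task.
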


The bound on $\rho_a$ is as strong as possible: for any $d<2n$, there is a non-degenerate rational curve $C$ with arithmetic genus $d-n$ (Example~\ref{ex:1}). Furthermore, if $d\geq 2n$ the statement fails (Example~\ref{ex:2}).
After we proved Theorem~\ref{thm:1}, we 
realized that the statement is just a special case of Castelnuovo's bound on the genus of a curve, see e.g.~\cite{harris}. Although Castelnuovo's bound is best known in the case of smooth curves, it actually gives an upper bound on the \emph{arithmetic} genus of any non-degenerate integral singular curve, as a careful reading of loc.~cit.~shows.
We remark that our approach to proving Theorem~\ref{thm:1} is different from the approach of \cite{harris}.

\begin{table}
	\begin{tabular}{|l |l |l|l|l|}
		\hline
\rule{0pt}{3.5ex} Case ID & Singularity type & Schubert Variety &  Codimension  \\	
\hline
\rule{0pt}{3.5ex}{\bf 1.1} & Cusp & \tb & $n-1$  \\
\rule{0pt}{3.5ex}{\bf 1.2} & Node & \tb & $n-2$  \\
\hline
\rule{0pt}{3.5ex}{\bf 2.1.a} & $(3,4,5)$-cusp & \tbc & $2n-1$  \\
\rule{0pt}{3.5ex}{\bf 2.1.b}/{\bf 3.1.d} & $(2,5)$/$(2,7)$-cusp & \tbd & $2n-2$  \\
\rule{0pt}{3.5ex}{\bf 2.2.a}/{\bf 3.2.f} & \begin{tabular}{@{}l}Tacnode/Node with\\third order contact\end{tabular} & \tbd & $2n-3$  \\
\rule{0pt}{3.5ex}{\bf 2.2.b} & \begin{tabular}{@{}l}Cusp with\\smooth branch\end{tabular}  & \tbc & $2n-2$  \\
\rule{0pt}{3.5ex}{\bf 2.3} & \begin{tabular}{@{}l}Ordinary\\triple point\end{tabular}  & \tbc & $2n-3$  \\
&&&\\
\hline
\end{tabular}\\
\vspace{.5cm}
\caption{Singularities via Projections}\label{table:strat1}
\end{table}
\begin{table}
	\begin{tabular}{|l |l |l|l|l|}
		\hline
\rule{0pt}{3.5ex} Case ID & Singularity type & Schubert Variety &  Codimension  \\	
\hline
\rule{0pt}{3.5ex}{\bf 3.1.a} & $(4,5,6,7)$-cusp & \tbcd & $3n-1$  \\
\rule{0pt}{3.5ex}{\bf 3.1.b} & $(3,5,7)$-cusp & \tbce & $3n-2$  \\
\rule{0pt}{3.5ex}{\bf 3.1.c} & $(3,4)$-cusp & \tbcf & $3n-3$  \\
\rule{0pt}{3.5ex}{\bf 3.2.a} & \begin{tabular}{@{}l}$(3,4,5)$-cusp with\\smooth branch\end{tabular}  & \tbcd & $3n-2$  \\
\rule{0pt}{3.5ex}{\bf 3.2.b} & \begin{tabular}{@{}l}Rhamphoid cusp\\with smooth branch\end{tabular}  & \tbce & $3n-3$  \\
\rule{0pt}{3.5ex}{\bf 3.2.c} & \begin{tabular}{@{}l}Two independent\\cusps\end{tabular}  & \tbcd & $3n-2$  \\
\rule{0pt}{3.5ex}{\bf 3.2.d} & \begin{tabular}{@{}l}Cusp with collinear\\smooth branch\end{tabular}  & \tbce & $3n-3$  \\
\rule{0pt}{3.5ex}{\bf 3.2.e} & \begin{tabular}{@{}l}Cusp with coplanar\\smooth branch\end{tabular}  & \tbcf & $3n-4$  \\
\rule{0pt}{3.5ex}{\bf 3.3.a} & \begin{tabular}{@{}l}Cusp with $2$\\smooth branches\end{tabular}  & \tbcd & $3n-3$  \\
\rule{0pt}{3.5ex}{\bf 3.3.b} & \begin{tabular}{@{}l}Tacnode with extra\\smooth branch\end{tabular}  & \tbce & $3n-4$  \\
\rule{0pt}{3.5ex}{\bf 3.3.c} & \begin{tabular}{@{}l}Planar\\triple point\end{tabular}  & \tbcf & $3n-5$  \\
\rule{0pt}{3.5ex}{\bf 3.3.d} & \begin{tabular}{@{}l}Ordinary\\quadruple point\end{tabular}  & \tbcd & $3n-4$  \\
&&&\\
\hline
\end{tabular}\\
\vspace{.5cm}
\caption{Singularities via Projections (cont'd)}\label{table:strat2}
\end{table}

Inspired by classification results for plane curves, we use our approach via multifiltrations to classify singularities arising by projecting smooth curves 
away from low-dimensional linear spaces. We summarize our results from \S\ref{sec:project} in the following.
Let $X$ be a smooth projective curve of geometric genus $\rho_g$ with a very ample line bundle $\mcL$ of degree $d$. 
Thus $X$ may be viewed as embedded in $\PP(V)$, where $V=H^0(\mcL)^*$.

\begin{thm}[See \S\ref{sec:project}]\label{thm:2}
Let $\ell\in\NN$ with $2\ell<d-2\rho_g$. Set $n+1=\dim V-\ell$ and assume that $n>2$.
\begin{enumerate}
	\item\label{item:bir} Consider the open subvariety $U\subset G(\ell,V)$ consisting of those linear spaces $L$ such that the projection of $X$ with center $\PP(L)$ is birational and induced by a basepoint free linear system. Then $U$ consists exactly of those $L$ for which $\PP(L)$ does not meet $X$.
		\newcounter{myco}
		\setcounter{myco}{\value{enumi}}
\end{enumerate}
Assume now furthermore that $\ell\leq 3$:
\begin{enumerate}
		\setcounter{enumi}{\value{myco}}
	\item\label{item:mult} For $L\in U$, the singularities of the corresponding projection of $X\subset\PP(V)$ are determined by the intersection behaviour of $L$ with appropriate multifiltrations in $V=H^0(X,\mcL)$, with two exceptions recorded in Remark~\ref{rem:caveat}.
	
	\item\label{item:sing} The 21 types of singularities which can occur are listed in Tables~\ref{table:strat1} and~\ref{table:strat2}. The notation {\bf i.j.k} used there indicates a singularity with singularity degree {\bf i}, and {\bf j} many branches.
	\item\label{item:flags} For each singularity type, there is a locally closed subvariety of $U\subset G(\ell,V)$ for which the corresponding projections are exactly those admitting this singularity. This subvariety is a non-empty open subset of a {\bf j}-parameter family of Schubert varieties of type corresponding to the partition listed in the table, and is obtained by varying the underlying flag.
	\item\label{item:codim} The codimension of these subvarieties is listed in Tables~\ref{table:strat1} and~\ref{table:strat2}. For any tuple of singularity types, the corresponding locally closed subvariety of $U\subset G(\ell,V)$ parametrizing projections with exactly these singularities is non-empty if and only if the sum of the singularity degrees is at most $\ell$. 
In cases where it is non-empty, its codimension is the sum of the codimensions for each singularity type.
\end{enumerate}
\end{thm}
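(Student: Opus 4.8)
The plan is to derive all five parts from the multifiltration machinery of \S\ref{sec:project}, beginning with part~\ref{item:bir}, which is essentially numerical. Since $\ell\geq 0$ and $2\ell<d-2\rho_g$, we have $d>2\rho_g$, so $\mcL$ is nonspecial and $\dim V=h^0(\mcL)=d-\rho_g+1$; hence $n=\dim V-\ell-1=d-\rho_g-\ell$, and the hypothesis $2\ell<d-2\rho_g$ rearranges to $d<2n$. I would then observe that a basepoint of the projected linear system is exactly a point of $X$ lying on the center $\PP(L)$, so the system is basepoint free precisely when $\PP(L)\cap X=\emptyset$; in that case the projection is a morphism whose image $C$ is non-degenerate in $\PP^n$ because $X\subset\PP(V)$ is non-degenerate. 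If this morphism had degree $k\geq 2$ onto $C$, then $\deg C=d/k\leq d/2<n$, contradicting that a non-degenerate curve in $\PP^n$ has degree at least $n$. Thus the morphism is birational, and $U=\{L:\PP(L)\cap X=\emptyset\}$.

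For parts~\ref{item:mult} and~\ref{item:sing}, fix $L\in U$ and a point $q\in C$, and let $p_1,\dots,p_r\in X$ be its preimages, corresponding to the $r$ branches of $C$ at $q$. The local analytic type of the singularity is encoded by the value semigroup of each branch together with the relative position of the branches. The first is read off from how $L$ meets the osculating flag $T^{(0)}_{p_i}\subset T^{(1)}_{p_i}\subset\cdots$ coming from $\mcL$ at each $p_i$, which fixes the gap sequence of that branch; the interaction between branches is captured by the finer $\ZZ^r$-graded multifiltration obtained by intersecting the flags at all $p_i$ simultaneously. The content of part~\ref{item:mult} is that, outside the two configurations of Remark~\ref{rem:caveat}, this intersection data determines the isomorphism type of the singularity. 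Because the singularity degree (the local drop in genus) is additive over the singular points and bounded by $\ell\leq 3$, only finitely many combinatorial intersection patterns can arise, and the bookkeeping of these patterns is what yields exactly the $21$ types of Tables~\ref{table:strat1} and~\ref{table:strat2}.

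For parts~\ref{item:flags} and~\ref{item:codim}, I would note that each intersection pattern is a list of conditions of the form ``$L$ meets a specified osculating space in at least a given dimension,'' that is, a Schubert condition relative to the flag determined by $p_1,\dots,p_r$. Collecting these conditions defines a Schubert variety in $G(\ell,V)$ whose partition is the Young diagram recorded in the tables, and letting the $r$ points vary sweeps out the asserted $\mathbf{j}$-parameter family; intersecting with $U$ and deleting the loci where a more degenerate singularity appears produces the non-empty locally closed subvariety. The codimension of a single Schubert condition is computed directly from its partition, recovering the table entries. The global statement — that the locus for a prescribed tuple of types is non-empty exactly when the sum of singularity degrees is at most $\ell$, with codimension the sum of the individual codimensions — reduces to an independence statement: the osculating spaces used at distinct points $p_i$ (and the nested ones at a single point) span their direct sum inside $V$. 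This holds because the relevant effective divisor on $X$ has degree at most $2\ell<d-2\rho_g$ and so imposes independent conditions on $H^0(\mcL)$; it makes the Schubert conditions transverse, so codimensions add, and it supplies the general-position construction giving non-emptiness when the degree sum is $\leq\ell$, while the constraint $\dim L=\ell$ forces emptiness otherwise.

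The principal difficulty will be the determination result of part~\ref{item:mult} together with the exhaustive enumeration behind part~\ref{item:sing}. Translating multifiltration data into a named analytic singularity requires, for each pattern, writing down a local parametrization of the branches and comparing the resulting value semigroup and branch-interaction invariants against the standard models; the delicate cases are those distinguished only by second-order position data, such as ``cusp with collinear smooth branch'' versus ``cusp with coplanar smooth branch,'' and precisely the two configurations of Remark~\ref{rem:caveat} where the multifiltration fails to pin down the type. Establishing the independence lemma in the sharp range $2\ell<d-2\rho_g$, so that it simultaneously delivers transversality (additive codimension) and non-emptiness, is the other technical heart of the argument.
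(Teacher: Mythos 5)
Your argument for part~(\ref{item:bir}) is correct and is genuinely different from the paper's: you deduce $d<2n$ from $2\ell<d-2\rho_g$ (using that $\mcL$ is nonspecial) and then rule out a degree-$k\geq 2$ map onto the image by the fact that a non-degenerate integral curve in $\PP^n$ has degree at least $n$. This is cleaner than the paper's route (Lemmas~\ref{lemma:bir} and~\ref{lemma:sec}), which instead counts secant and tangent lines meeting $\PP(L)$ via Lemma~\ref{lemma:bound}; note, though, that the paper's finiteness statement about secants and tangents is reused later to set up the ramification points, so you would still need some version of it (or the observation that a birational morphism of curves is an isomorphism away from finitely many points).

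For parts~(\ref{item:mult})--(\ref{item:codim}), however, your proposal restates what must be proved rather than proving it, and two genuine ideas are missing. First, you never explain why the singularity at $q$ is determined by \emph{finitely many} intersection numbers $\dim L\cap\F^\alpha$, nor why the classification into $21$ types is exhaustive. The paper's engine here is the Key Lemma (Lemma~\ref{lemma:key} and its variant Lemma~\ref{lemma:help}): a gap function of degree $\leq\gamma$ is controlled by its values on $|\alpha|\leq 2\gamma+2$, which is what makes the enumeration in Proposition~\ref{prop:classifylambda} finite and lets the conditions in Table~\ref{table:vs} live in the range where $\dim L\cap\F^\alpha$ computes the gap function (Corollary~\ref{cor:l1}). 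Without such a regularity statement your ``bookkeeping of finitely many combinatorial patterns'' has no justification. Second, you conflate the intersection data of $L$ with the osculating flags --- which only sees the gap function of the \emph{vector space} $(1/s)M$ --- with the invariants of the local ring $R_i$ it generates. Bridging that difference is Proposition~\ref{prop:linear}, it is where the two exceptional cases of Remark~\ref{rem:caveat} actually arise, and it is not addressed by your appeal to ``value semigroups plus relative position of branches.'' Finally, in part~(\ref{item:codim}) the delicate point is not transversality (your independence observation via $2\ell<d-2\rho_g$ is indeed Lemma~\ref{lemma:bound} and suffices for that) but non-emptiness \emph{after} deleting the boundary Schubert loci and the loci $\Delta_i$ of extra secants; the paper's Lemma~\ref{Schubertloci} shows these removed sets are proper closed subsets of strictly smaller dimension (using that the relevant flags need only be specified up to dimension $2\delta_i+2$ and a Pieri-rule computation), and your proposal asserts this without an argument.
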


\noindent In particular, applying these results to projections of the rational normal curve, we obtain a complete classification of the configurations of singularities occurring for non-degenerate rational curves of degree $d$ whenever $d-n \leq 3$ and $d < 2n$.
\begin{rem}
The inequalities  $d-n \leq 3$ and $d < 2n$ for rational curves explain the title of this article. More generally, 
the condition $2\ell < d - 2\rho_g$ gives $d<2n-2 h^1(\mathcal L)$, recalling that $\ell + n + 1 = h^0(\mathcal L)$ and applying Riemann-Roch to 
$\mcL$. Similarly, the condition $\ell\leq 3$ gives $d-n\leq 3+\rho_g - h^1(\mcL)$. 
\end{rem}
\begin{rem}
	While there is a lack of agreed-upon terminology for singularity types, we hope that our descriptions in Tables~\ref{table:strat1} and~\ref{table:strat2} are suggestive. Precise descriptions may be found in \S\ref{ssec:sing}. We use the notation $(a_1,a_2,\ldots,a_k)$-cusp to describe any unibranched singularity whose valuation semigroup is generated by the elements $a_i\in\NN$. For example, a standard cusp is a $(2,3)$-cusp.

We use the conventions of e.g.~\cite[Chapter 4]{3264} for indexing Schubert varieties by partitions.
\end{rem}

\begin{ex}[Rational quintics in $\PP^3$]
Any rational quintic $C$ in $\PP^3$ is a basepoint-free projection of $X_5\subset \PP^5$, the rational normal curve of degree five.
Thus, we are in the situation of Theorem~\ref{thm:2} with $\ell=2$, $d=5$, $n=3$.
We obtain that the possible configurations of singularities that $C$ can have are either exactly one of the singularities of Table~\ref{table:strat1}, or two nodes, two cusps, or a node and a cusp. Note that since $\ell=2$, the case {\bf 2.1.b}/{\bf 3.1.d} can only be a $(2,5)$-cusp, and the case {\bf 2.2.a}/{\bf 3.2.f} can only be a tacnode.

The dimension of the Grassmannian parametrizing the projections is $\ell\cdot(n+1)=8$, and the dimensions of the strata corresponding to these configurations range from $8$ to $3$.
In each case, the results of Theorem~\ref{mainthmproject} can be used to construct a parametrization of a curve with given singularity configuration. For example, for a cusp with smooth branch ({\bf 2.2.b}), we know that $\PP(L)$ intersects a tangent line of $X_5$, as well as a secant line meeting this tangent line (in $X_5)$. After acting by $\textnormal{PGL}(2,\mathbb C)$ on $X_5\subset \PP^5$, we can assume that $\PP(L)$ is the span of 
\begin{align*}
	(c:1:0:0:0:0)\\ 
	(1:0:0:0:0:1)
\end{align*}
	for some $c\in\KK$. We obtain that $C$ is parametrized by
	\begin{align*}
		x^5+cxy^4-y^5,\quad x^2y^3,\quad x^3y^2,\quad\textrm{and}\ x^4y.
	\end{align*}

Similarly, up to the $\textnormal{PGL}(2,\mathbb C)$ action and the choice of coordinates on $\PP^3$, 
  there are exactly two rational space quintics with a singularity isomorphic to the $(3,4,5)$-cusp ({\bf 2.1.a}) --- their parametrizations are:
  \begin{align*}
	x^5, x^2y^3, xy^4, y^5;\qquad \textrm{or}\quad
	x^5+x^3y^2, x^2y^3, xy^4, y^5.
\end{align*}
\end{ex}
\begin{rem}
	The families of flags appearing in statement (\ref{item:flags}) of Theorem~\ref{thm:2} are in general \emph{not} families of osculating flags, but are rather obtained from our more complicated multifiltrations.
\end{rem}
\subsection{Related Work and Organization}
In recent work, Cotterill,~Feital,~and Martins have used valuation semigroups to study properties of singular rational curves, see \cite{ethan1,ethan2}. While we take a slightly different perspective, our approach is closely related to theirs. In particular, \cite[Theorem 2.1]{ethan1} is connected to our Theorem \ref{thm:2}, although neither result implies the other.

Additonally, several other techniques have been used to study rational curves in projective space. One such technique involves studying the syzygies among the functions parametrizing a rational curve \cite{syzygies}. Furthermore, there is significant literature dedicated to studying the splitting types of the normal and restricted tangent bundles on rational curves, see e.g.~\cite{kollar_book_rational_curves,clemens,alzati,coskun,kebekus_families_of_singular_rational_curves}.
Finally, the \emph{simple singularities} that can occur in curves have been studied in e.g.~\cite{af,stevens}.

We now describe the organization of the remainder of this paper.
In \S\ref{sec:gap}, we introduce \emph{gap functions}, which are a combinatorial tool for measuring to what extent a complete ring differs from its normalization. Our key lemma (Lemma~\ref{lemma:key}) gives a kind of bound on the ``regularity'' of a gap function and features prominently in the rest of our analysis.
We introduce our second main tool, the \emph{multifiltration} arising from a collection of osculating flags in \S\ref{sec:osc}, and relate it to our gap functions. We then use the relation between multifiltrations and gap functions  to prove Theorem~\ref{thm:1} in \S\ref{sec:bound}.

In \S\ref{sec:sing}, we offer a detailed study of small degree gap functions. This analysis is employed in \S\ref{sec:project} to classify all possible configurations of singularities arising from projections, proving Theorem~\ref{thm:2}.

\subsection*{Acknowledgements}
We are thankful to the IMPAN ``Rational Curves'' working group for helpful discussions, 
   especially to Weronika Buczy{\'n}ska.
   Jan Christophersen, John Christian Ottem, Ragni Piene, and Frank Sottile all provided useful comments. The anonymous referee helped us simplify our proof of Corollary \ref{thm:1new}.
E.~Ventura would additionally like to thank Edoardo Ballico, Riccardo Re, and Francesco Russo for useful discussions.

J.~Buczy{\'n}ski is supported by the National Science Center, Poland, project ``Complex contact manifolds and geometry of secants'', 2017/26/E/ST1/00231.
E.~Ventura would like to thank Simon Fraser University, where part of this project was conducted, for the hospitality.
N.~Ilten was partially supported by NSERC. 
All authors were partially supported by the grant 346300 for IMPAN from the Simons Foundation and the matching 2015-2019 Polish MNiSW fund.
Finally, the paper is also a part of the activities of AGATES research group.

\section{Gap Functions}\label{sec:gap}
\subsection{Preliminaries}\label{sec:prelim}
For some $r\in \NN$, consider the ring
\[
	S=\prod_{i=1}^r \KK[[t_i]].
\]
\begin{defn}\label{defn:gap_function}
	For any $\KK$-vector space $R\subset S$, the \emph{gap function} of $R$ in $S$ is the map
\[\lambda_R:\ZZ_{\geq 0}^r\to \ZZ_{\geq 0}\]
with
\[
	\lambda_R(\alpha)=\dim S/\left(R+\langle t_1^{\alpha_1},\ldots,t_r^{\alpha_r}\rangle\right),
\]
where $\langle \bullet \rangle$ denotes the ideal in $S$ generated by $\bullet$, 
    while the quotient $/$ is of vector spaces.
Note that $t_i^0$ is not the multiplicative unit of $S$, but rather the $r$-tuple with $1$ at the $i$th position and zero elsewhere.
We will simply write $\lambda=\lambda_R$ whenever it is clear what $R$ is.
\end{defn}

\begin{ex}
   Let $C$ be a curve and $Q \in C$ a closed point. 
   Consider the local ring $\CO_{C,Q}$, let $R$ be its completion,
     and $S\simeq \prod_{i=1}^r \KK[[t_i]]$ the normalization of $R$. Here $r$ is the number of branches of the singularity at $Q$.
   Thus $\lambda_R$ is an invariant of the singularity $(C,Q)$ and in the cases considered in this article 
     (see Proposition~\ref{prop:sing}) the gap function  is sufficient to determine the singularity type.
   That is, $\lambda_R$ determines $R \subset S$ up to an automorphism of $S$.
\end{ex}
\noindent In \S\ref{sec:rsing} we discuss further examples of interesting $R \subset S$ 
   for which the gap function $\lambda_R$ is relevant to our investigations.
This includes the case when $R\subset S$ is a finite dimensional $\KK$-vector subspace determined by a linear system.

We introduce some further useful pieces of notation.
The ring $S$ is equipped with $r$ discrete valuations $\val_i:S\to \ZZ\cup\{\infty\}$ obtained by projecting $S$ to its $i$th factor and taking the standard discrete valuation on $\KK[[t_i]]$ given by the order of vanishing in $t_i$.
Composing these with the inclusion of $R$ in $S$, we obtain a map
\[
	\val:R\to (\ZZ\cup\{\infty\})^r
\]
sending $f\in R$ to $(\val_1(f),\ldots,\val_r(f))$.
We will denote the image of this map by $\Sigma=\val(R)$. If $R$ itself is a ring, then $\Sigma$ is a semigroup.
We call 
\[
	\delta(\lambda)=\sup_{\alpha\in\ZZ_{\geq 0}^r} \lambda(\alpha)
\]
the \emph{degree} of the gap function $\lambda$.
The relationship between $\Sigma$ and $R$ has been studied by a number of authors, see e.g.~\cite{barucci,carvalho,ethan1}.

Given an element $\alpha\in \ZZ_{\geq 0}^r$ and an index $1\leq i\leq r$, $\alpha_i$ denotes the $i$-th coordinate of 
   $\alpha$.
We set \[
|\alpha|=\sum \alpha_i.
\]
The element $e_i\in \ZZ^r$ denotes the $i$-th vector of the standard basis. 
\begin{defn}
	Given some element $\alpha\in \ZZ_{\geq 0}^r$ and an index $1\leq i \leq r$, we will say that \emph{$\alpha[i]$ belongs to} or \emph{is contained in} $\Sigma$ if there exists an element $\alpha'\in\Sigma$ such that $\alpha_j\leq \alpha_j'$ for all $1\leq j \leq r$, and $\alpha_i=\alpha_i'$. We will also refer to $\alpha[i]$ as an \emph{element} of $\Sigma$.
\end{defn}

\begin{rem}\label{rem:sigma}
It is straightforward to check that $\lambda(\alpha+e_i)-1\leq \lambda(\alpha)\leq \lambda(\alpha+e_i)$, and $\lambda(\alpha)=\lambda(\alpha+e_i)$ if and only if $\alpha[i]$ belongs to $\Sigma$.
In particular, $\alpha\in\Sigma$ if and only if $\lambda(\alpha)=\lambda(\alpha+e_i)$ for all $i$.
\end{rem}

We call a gap function $\lambda=\lambda_R:\ZZ_{\geq 0}^r\to \ZZ_{\geq 0}$ \emph{standard} if $\lambda(e_i)=0$ for all $i$, and $\lambda(e_1+\ldots+e_r)=r-1$.
Notice that the condition $\lambda(e_i)=0$ is satisfied if $R$ contains a unit of $S$. For a standard gap function $\lambda=\lambda_R$, its values are determined by its restriction to $\NN^r$. Indeed, it is straightforward to check that for any $\alpha\in\ZZ_{\geq 0}^r$ with $\alpha_i=0$, $\alpha\neq 0$, we have $\lambda(\alpha)=\lambda(\alpha+e_i)-1$.

\subsection{The Key Lemma}\label{sec:key}

The following lemma is the key result which will often allow us to calculate the singularity degree of a singular point on a curve $C$:
\begin{lemma}[Key Lemma]\label{lemma:key}
Fix $\gamma\in \ZZ_{\geq 0}$. Assume that $R \subset S$ is a subalgebra and let $\lambda=\lambda_R$.
Suppose that for any $\alpha\in\ZZ_{\geq 0}^r$ satisfying $|\alpha|\leq 2\gamma+2$, we have $\lambda(\alpha)\leq \gamma$. Then
\[
\lambda(\alpha)\leq \gamma
\]
for \emph{all} $\alpha\in\ZZ_{\geq 0}^r$, that is, the degree of $\lambda$ is at most $\gamma$.
\end{lemma}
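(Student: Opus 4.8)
The plan is to argue by minimal counterexample, reducing the statement to a clean assertion about subalgebras of a finite-dimensional algebra. Suppose the conclusion fails, so $\lambda(\alpha)\geq \gamma+1$ for some $\alpha$. By Remark~\ref{rem:sigma} the function $\lambda$ is non-decreasing in each coordinate and changes by at most $1$ under $\alpha\mapsto\alpha+e_i$. Hence the set $\{\alpha:\lambda(\alpha)\geq\gamma+1\}$ is nonempty and upward closed in the coordinatewise partial order, so it has a minimal element $\beta$; since $\lambda(0)=0$ and $\lambda(\beta-e_i)\leq\gamma$ for each $i$ with $\beta_i\geq 1$, necessarily $\lambda(\beta)=\gamma+1$. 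The entire lemma then reduces to proving $|\beta|\leq 2\gamma+2$, as this contradicts the hypothesis: it exhibits a point with $|\beta|\leq 2\gamma+2$ where $\lambda(\beta)=\gamma+1>\gamma$.

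To use that $R$ is a subalgebra, I rewrite everything inside the finite-dimensional quotient $A:=S/\langle t_1^{\beta_1},\dots,t_r^{\beta_r}\rangle\cong\prod_{i=1}^r\KK[t_i]/(t_i^{\beta_i})$, which has dimension $|\beta|$. Since the ideal is an ideal of $S$, the image $W$ of $R$ is a unital $\KK$-subalgebra of $A$, and by definition $\lambda(\beta)=\dim_\KK A-\dim_\KK W$, i.e. $\gamma+1=\codim_A W$. Writing $u_i$ for the top monomial $t_i^{\beta_i-1}$ in the $i$-th factor, the criterion of Remark~\ref{rem:sigma} shows that $\lambda(\beta)=\lambda(\beta-e_i)+1$ is equivalent to $u_i\notin W$: an element of $R$ mapping to $u_i$ is exactly a witness that $(\beta-e_i)[i]\in\Sigma$. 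Thus minimality of $\beta$ translates into the statement that \emph{none of the socle generators $u_1,\dots,u_r$ lies in $W$}. The target $|\beta|\leq 2\gamma+2$ becomes $\dim_\KK A\leq 2\codim_A W$, that is, $\dim_\KK W\leq\tfrac12\dim_\KK A$.

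It therefore remains to prove the self-contained statement: if $W$ is a unital subalgebra of $A=\prod_i\KK[t_i]/(t_i^{\beta_i})$ containing none of the top elements $u_i$, then $\dim_\KK W\leq\tfrac12\dim_\KK A$. I would establish this by a reflection/pairing argument driven by multiplicative closure. In the one-branch case $r=1$ this is precisely the classical bound that a numerical semigroup with $g$ gaps has Frobenius number at most $2g-1$: pairing an order $k$ with its complement $(\beta_1-1)-k$ and using $u_1=t^{\beta_1-1}\notin W$ forces, for each order $k$ realized by $W$, the complementary order to be \emph{un}realized (else the product of the two witnesses would realize $u_1$, as its order is $\beta_1-1<\beta_1$). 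This injects realized orders into unrealized ones, giving $\dim W\leq\tfrac12\dim A$. For general $W$ I would set up the analogous pairing of monomials $t^v\leftrightarrow t^{(\beta-\mathbf 1)-v}$ and again invoke closure of $W$ under multiplication.

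The main obstacle is exactly the multi-branch case $r>1$. The algebra $A$ is Gorenstein only when $r=1$; for $r>1$ its socle $\bigoplus_i\KK\,u_i$ has dimension $r$, so there is no single perfect duality to reflect against, and the value ``semigroup'' $\Sigma=\val(R)$ naturally lives in $(\ZZ\cup\{\infty\})^r$, receiving genuine contributions from elements supported on proper subsets of the branches. Handling these partial-support values---equivalently, the fact that membership of $\alpha[i]$ in $\Sigma$ refines plain membership of $\alpha$---is the delicate point. I would organize this either by inducting on $r$ through the projection $S\to\prod_{j\neq i}\KK[[t_j]]$ (splitting off one branch and controlling the correction term coming from elements vanishing on that branch, which depends on the remaining coordinates and only stabilizes in the limit), or by running the reflection branch-by-branch along a coordinate-ordered path and applying the $r=1$ bound to the per-branch subsemigroups $\Sigma_i=\{\val_i(f):f\in R,\ \val_j(f)\geq\beta_j\text{ for }j<i\}$, which are closed under addition by multiplicativity of $\val$. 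The bookkeeping needed to keep these per-branch estimates mutually compatible is where the real work lies.
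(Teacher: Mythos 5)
Your reduction is correct and cleanly done: passing to a minimal counterexample $\beta$ (which exists since the bad set is upward closed in $\ZZ_{\geq 0}^r$), noting $\lambda(\beta)=\gamma+1$, and translating minimality into the statement that the image $W$ of $R$ in $A=S/\langle t_1^{\beta_1},\dots,t_r^{\beta_r}\rangle$ is a unital subalgebra containing none of the top elements $u_i=t_i^{\beta_i-1}$, with target $\dim_\KK W\leq\tfrac12\dim_\KK A$ --- all of this checks out, and your $r=1$ reflection argument is complete and correct. But the proof stops exactly where the lemma's content begins. For $r>1$ you offer two candidate strategies and explicitly defer the ``bookkeeping''; neither is carried out, and the second one as stated cannot work directly: the hypothesis $u_i\notin W$ constrains only those $f\in R$ with $\val_j(f)\geq\beta_j$ for \emph{all} $j\neq i$, whereas your per-branch sets $\Sigma_i$ impose vanishing only on the branches $j<i$, so for $i<r$ the reflection obstruction $\beta_i-1\notin\Sigma_i$ simply need not hold. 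Already for $W=\KK[(t_1,t_2)]\subset\KK[t_1]/(t_1^b)\times\KK[t_2]/(t_2^b)$ one has $\Sigma_1=\{0,\dots,b-1\}$ in full, and the bound $\dim W=b=\tfrac12\dim A$ is achieved only because $\Sigma_2$ is then forced to be empty --- an interaction between branches that a branch-by-branch application of the $r=1$ bound does not see.

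That interaction is precisely what the paper's proof is engineered to capture: Construction~\ref{con} builds \emph{two} lattice paths from $0$ to $\alpha$ that traverse the odd middle coordinates in opposite orders, producing the two lists \eqref{eqn:s1} and \eqref{eqn:s2} of $2|\alpha|$ potential elements of $\Sigma$, which pair up so that each pair, if realized, sums (by multiplicativity of $R$) to a witness that $\lambda(\alpha)\leq\gamma$; a count shows at least $2(|\alpha|-\gamma-1)$ of them are realized, which exceeds the number $|\alpha|$ of pairs exactly when $|\alpha|>2\gamma+2$, so the pigeonhole principle closes the argument. Your minimal-counterexample framing is essentially equivalent to the paper's induction on $|\alpha|$, and your algebra $A$ is a perfectly good place to run such an argument, but some version of this two-path pigeonhole (or another mechanism genuinely coupling the branches) still has to be supplied; as it stands the proposal is an accurate reduction plus a proof of the $r=1$ case only.
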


We are going to repeatedly use the following observation throughout this section:

\begin{rem}\label{heartproof}
	Let $\alpha\in \ZZ_{\geq 0}^r$, $\alpha\neq 0$. Then $\lambda(\alpha)\leq \gamma$ if and only if there exists some  $i$ such that $\alpha-e_i\in \ZZ_{\geq 0}^r$, and either 
\begin{enumerate}
\item[(i)] $\lambda(\alpha-e_i)<\gamma$, or
\item[(ii)] $\lambda(\alpha-e_i)\leq \gamma$ and $\alpha[i]$ belongs to $\Sigma$. 
\end{enumerate}
Equivalently, $\lambda(\alpha)\leq \gamma$ if and only if for \emph{all} $i$ such that $\alpha-e_i\in \ZZ_{\geq 0}^r$, one of these two conditions hold.
This follows directly from Remark~\ref{rem:sigma}.
\end{rem}

The following construction will be used in the proof of Lemma \ref{lemma:key}
\begin{con}\label{con}
	Consider $b^1\in\ZZ_{\geq 0}^r$ and $c^0\in b^1+\ZZ_{\geq 0}^r$.
We will inductively construct two paths of lattice points connecting $b^1$ and $c^0$, with each step differing by a standard basis element of $\ZZ^r$. 
	Without loss of generality, we will reorder the coordinates of $\ZZ^r$ so that $(c^0-b^1)_i$ is odd if and only if $1\leq i \leq p$ for some $p\leq r$.

	To begin, we inductively set
\[
b^{i+1} = b^i + e_{j_i},\qquad c^i = c^{i-1}-e_{j_i}
\]
where $j_i$ is the smallest index such that $(c^{i-1}-b^i)_{j_i}\geq 2$, i.e.~the $j_i$-th coordinate of the lattice point $c^{i-1}-b^i$ is at least $2$. This procedure continues until the latter condition is violated for all coordinates. 

Let $\xi$ be the maximum integer such that $b^{\xi}$ is defined. We thus have sequences
\begin{align*}
&b^1,b^2,\ldots,b^{\xi-1},b^\xi\\
c^0,&c^1,\ldots,\ \ c^{\xi-1}
\end{align*}
with subsequent entries differing by a standard basis vector. By construction, we have that $c^{\xi-1}-b^\xi$ has $1$ as its first $p$ coordinates, and $0$ as its remaining coordinates.

We now set
\begin{align*}
&f^0=b^\xi,f^1=f^0+e_1,\ldots,f^i=f^{i-1}+e_i,\ldots,f^p=f^{p-1}+e_p=c^{\xi-1}\\
&g^0=b^\xi,g^1=g^0+e_p,\ldots,g^i=g^{i-1}+e_{p-i+1},\ldots,g^p=g^{p-1}+e_1=c^{\xi-1}.
\end{align*}
We thus obtain two sequences, whose subsequent elements are always differing by a standard basis vector:
\begin{align*}
(b^1,b^2,\ldots,b^\xi=f^0,f^1,\ldots,f^p=c^{\xi-1},c^{\xi-2},\ldots,c^0)\\
(b^1,b^2,\ldots,b^\xi=g^0,g^1,\ldots,g^p=c^{\xi-1},c^{\xi-2},\ldots,c^0)\\
\end{align*}
By comparing the value of $\lambda$ on two neighboring elements in these sequences, we obtain a number of potential elements belonging to $\Sigma$:
the first sequence leads to the elements 
\begin{equation}\label{eqn:s1}
b^1[j_1], b^2[j_2], \ldots, b^{\xi-1}[j_{\xi-1}],f^0[1],f^1[2],\ldots,f^{p-1}[p],c^{\xi-1}[j_{\xi-1}],\ldots,c^1[j_1]
\end{equation}
potentially belonging to $\Sigma$, while the second sequence leads to the potential elements
\begin{equation}\label{eqn:s2}
b^1[j_1], b^2[j_2], \ldots, b^{\xi-1}[j_{\xi-1}],g^0[p],g^1[p-1],\ldots,g^{p-1}[1],c^{\xi-1}[j_{\xi-1}],\ldots,c^1[j_1].
\end{equation}
\end{con}
In conjunction with the following, the reader may wish to consider Example~\ref{ex:key},  which illustrates our use of the construction above.

\begin{proof}[Proof of Lemma \ref{lemma:key}]
	We prove the lemma by induction on $|\alpha|$. Consider any $\alpha\in\ZZ_{\geq 0}^r$ such that $|\alpha|>2\gamma+2$ and for all $\alpha'$ with $|\alpha'|<|\alpha|$, $\lambda(\alpha')\leq \gamma$.
This implies that $\lambda(\alpha)\leq \gamma+1$. We will show that $\lambda(\alpha)\leq \gamma$.

	We will apply Construction \ref{con} to the situation $b^1=0$, $c^0=\alpha$, and consider the potential elements \eqref{eqn:s1} and \eqref{eqn:s2} of $\Sigma$.
We now notice by adding semigroup elements that
\begin{align}\label{addingsemigroupelements1}
b^i[j_i],c^i [j_i]\in \Sigma \implies(\alpha-e_{j_i})[j_i]\in \Sigma \qquad 1\leq i \leq \xi-1;
\end{align}
\begin{align}\label{addingsemigroupelements2}
f^i[i+1],g^{p-1-i}[i+1]\in\Sigma\implies (\alpha-e_{i+1})[i+1]\in \Sigma \qquad 0 \leq i \leq p-1.
\end{align}
By Remark~\ref{heartproof} and the assumption on $\alpha$, having one of these elements on the right hand side belong to $\Sigma$ implies that $\lambda(\alpha)\leq \gamma$.

We now count how many of the elements of \eqref{eqn:s1} and \eqref{eqn:s2} must actually belong to $\Sigma$. 
Since $\lambda(c^0)\leq \gamma+1$ by assumption and $\lambda(b^1)=0$, $\lambda$ increases at most $\gamma+1$ times, so at most $\gamma+1$ of the elements of \eqref{eqn:s1} (and similarly \eqref{eqn:s2}) do not belong to $\Sigma$.
In other words, $|\alpha|-(\gamma+1)$ of the elements of \eqref{eqn:s1}, as well as $|\alpha|-(\gamma+1)$ of the elements of \eqref{eqn:s2}, must belong to $\Sigma$.
Thus, the multiset consisting of \eqref{eqn:s1} and \eqref{eqn:s2} has $2|\alpha|$ elements, and at least $2(|\alpha|-(\gamma+1))$ elements in $\Sigma$.
There are $|\alpha|$ pairs of elements, which, if belonging to $\Sigma$, imply $\lambda(\alpha)\leq \gamma$ (we are counting $b^i[j_i],c^i [j_i]$ twice).
But 
\[
2(|\alpha|-(\gamma+1))>|\alpha|>0
\]
since $|\alpha|>2\gamma+2$. So by the pigeonhole principle, we conclude that one such pair belongs to $\Sigma$, and $\lambda(\alpha)\leq \gamma$.

The claim of the lemma follows by induction and the hypothesis on $\lambda$.
\end{proof}

\begin{ex}\label{ex:key}
	We consider the situation $r=2$, $\gamma=3$ from Lemma~\ref{lemma:key}, and assume that the hypotheses of the lemma are fulfilled. The point $\alpha=(5,4)$ has $|\alpha|=9>2\gamma+2$. We will show that nonetheless $\lambda(\alpha)\leq 3$.

	For this, we set
	\begin{align*}
b^1=(0,0)\qquad b^2=(1,0)\qquad b^3=(2,0)\qquad b^4=(2,1)\qquad b^5=(2,2) \\
c^0=(5,4)\qquad c^1=(4,4) \qquad c^2=(3,4)\qquad c^3=(3,3)\qquad c^4=(3,2).
	\end{align*}
	See Figure~\ref{fig:key}.
Along the path $b^1,b^2,b^3,b^4,b^5,c^4,c^3,c^2,c^1,c^0$ $\lambda$ starts with value $0$, ends with value at most $\gamma+1=4$, and is non-decreasing.	
Thus, in the nine steps in this path, $\lambda$ is constant at least five times. This means that five of 
\begin{align*}
	(0,0)[1],\quad (1,0)[1],\quad	(2,0)[2],\quad (2,1)[2],\quad (2,2)[1],\\ \quad (3,2)[2], \quad (3,3)[2], \quad (3,4)[1], \quad (4,4)[1]
\end{align*}
must belong to $\Sigma$, and hence one of the pairs
\begin{align*}	
\{(0,0)[1],
(4,4)[1]\},\quad \{(1,0)[1],(3,4)[1]\}, \quad \{(2,0)[2],(3,3)[2]\}\\
\{(2,1)[2],(3,2)[2]\}, \quad \{(2,2)[1],(2,2)[1]\}\\
\end{align*}
must also be contained in $\Sigma$. But this implies by adding semigroup elements that either $(4,4)[1]$ or $(5,3)[2]$ is in $\Sigma$, which by Remark~\ref{heartproof}
shows that $\lambda(\alpha)\leq 3$.

We note here that this situation is slightly simpler than the general situation in the proof of Lemma~\ref{lemma:key}: since $r=2$, the two different paths appearing in the proof end up equal to one another, which is why we only see a single path in this example.
\end{ex}

\begin{figure}
	\begin{tikzpicture}
		\draw[very thin,color=gray] (0,0) grid (5,4);
		\draw[very thick] (0,0) -- (2,0) -- (2,2) -- (3,2) -- (3,4) -- (5,4);
		\draw[fill,color=red] (0,0) circle [radius=.1];
		\draw[fill,color=blue] (1,0) circle [radius=.1];
		\draw[fill,color=green] (2,0) circle [radius=.1];
		\draw[fill,color=yellow] (2,1) circle [radius=.1];
		\draw[fill,color =orange] (2,2) circle [radius=.1];
		\draw[fill,color = yellow] (3,2) circle [radius=.1];
		\draw[fill,color=green] (3,3) circle [radius=.1];
		\draw[fill,color=blue] (3,4) circle [radius=.1];
		\draw[fill] (5,4) circle [radius=.1];
		\draw[color =orange,very thick ] (4,4) circle [radius=.12];
		\draw[fill,color=blue] (4,4) circle [radius=.1];
		\draw[fill,color =red,very thick ] (4,4) circle [radius=.05];
		\draw[color =green,very thick ] (5,3) circle [radius=.1];
		\draw[color =yellow,very thick ] (5,3) circle [radius=.05];
		\draw (0,0) node [above left] {$b^1$};
		\draw (1,0) node [below right] {$b^2$};
		\draw (2,0) node [below right] {$b^3$};
		\draw (2,1) node [above left] {$b^4$};
		\draw (2,2) node [above left] {$b^5$};
		\draw (3,2) node [below right] {$c^4$};
		\draw (3,3) node [above left] {$c^3$};
		\draw (3,4) node [above left] {$c^2$};
		\draw (4,4) node [below right] {$c^1$};
		\draw (5,4) node [below right] {$c^0$};
		\draw (5,4) node [above right] {$\alpha$};
		\draw (4,4) node [above ] {${b^5+b^5}$};
		\draw (4,4.3) node [above ] {${b^2+c^2}$};
		\draw (4,4.6) node [above ] {${b^1+c^1}$};
		\draw (5,3) node [below right] {$b^3+c^3$};
		\draw (5,2.6) node [below right] {$b^4+c^4$};
	\end{tikzpicture}

\caption{The path from Example~\ref{ex:key}\label{fig:key}}
\end{figure}
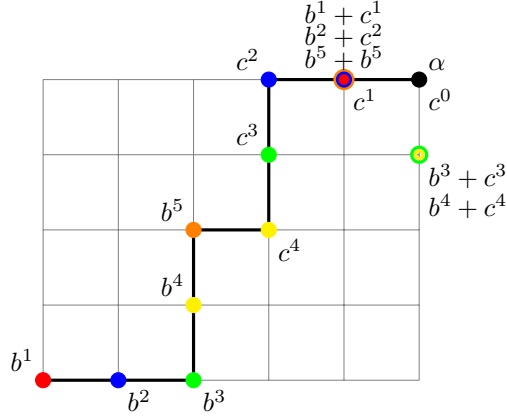

When we additionally assume that $\lambda=\lambda_R$ is a standard gap function, we have a slightly modified result, which we will use in \S\ref{sec:sing}.

\begin{lemma}\label{lemma:help}
	Let $R\subset S$ be a subalgebra and assume that $\lambda=\lambda_R$ is a standard gap function.
	Fix $\gamma\geq r-1$.
Consider any $\alpha\in \NN^r$ with $\ell$ coordinates $\alpha_i$ equal to one, such that $|\alpha|>2\gamma+2-\ell$. Assume that for all $\alpha'\in \NN^r$ with $\alpha'\neq \alpha$ and $(\alpha-\alpha')_i\geq 0$ for all $i$, we have $\lambda(\alpha')\leq \gamma$. Then $\lambda(\alpha)\leq \gamma$. 
\end{lemma}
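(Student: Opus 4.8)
The plan is to re-run the path-and-pigeonhole argument from the proof of Lemma~\ref{lemma:key}, applying Construction~\ref{con} to $b^1=0$ and $c^0=\alpha$, but to squeeze an extra $\ell$ worth of savings out of the standardness hypothesis. First I would dispose of the trivial arithmetic: since $\gamma\ge r-1$, the bound $|\alpha|>2\gamma+2-\ell$ forces $\ell<r$, so $\alpha$ has at least one coordinate $\alpha_j\ge 2$. For such $j$ the point $\alpha-e_j$ lies in $\NN^r$, is $\le\alpha$, and is $\ne\alpha$, so $\lambda(\alpha-e_j)\le\gamma$ by hypothesis, whence $\lambda(\alpha)\le\gamma+1$ by Remark~\ref{rem:sigma}. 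The same hypothesis furnishes $\lambda(\alpha-e_j)\le\gamma$ for \emph{every} $j$ with $\alpha_j\ge 2$, which is exactly the input needed to invoke Remark~\ref{heartproof} once a suitable element of $\Sigma$ is produced.

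Next I would run Construction~\ref{con}, producing the two sequences of potential elements \eqref{eqn:s1} and \eqref{eqn:s2}. As in Lemma~\ref{lemma:key}, along each sequence $\lambda$ rises from $\lambda(0)=0$ to $\lambda(\alpha)\le\gamma+1$ in unit steps, so each sequence contains at most $\gamma+1$ potential elements failing to lie in $\Sigma$. The new ingredient is standardness: for each coordinate $k$ with $\alpha_k=1$, the step raising $t_k$ from $0$ to $1$ occurs at a nonzero lattice point, so $\lambda(\beta)=\lambda(\beta+e_k)-1$ forces a strict increase there. Concretely, such $k$ is odd, hence among the first $p$ coordinates, and $(b^\xi)_k=(\alpha_k-1)/2=0$, so this step sits in the $f$-part of \eqref{eqn:s1} and the $g$-part of \eqref{eqn:s2}, namely at $f^{k-1}[k]$ and $g^{p-k}[k]$. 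Thus the $\ell$ potential elements attached to the unit coordinates are forced \emph{outside} $\Sigma$, and by \eqref{addingsemigroupelements2} they pair with one another into $\ell$ \emph{useless} pairs: their would-be conclusion $(\alpha-e_k)[k]$ can never belong to $\Sigma$, since $\alpha_k=1$ already gives $\lambda(\alpha-e_k)=\lambda(\alpha)-1<\lambda(\alpha)$.

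I would then discard these $\ell$ pairs and count on what remains. The surviving $|\alpha|-\ell$ pairs all have a usable conclusion $(\alpha-e_j)[j]$ with $\alpha_j\ge 2$. After deleting the $\ell$ forced non-$\Sigma$ elements, each sequence now has at most $(\gamma+1)-\ell$ remaining elements outside $\Sigma$, hence at least $(|\alpha|-\ell)-(\gamma+1-\ell)=|\alpha|-\gamma-1$ remaining elements inside $\Sigma$; summing over both sequences gives at least $2(|\alpha|-\gamma-1)$ in-$\Sigma$ slots distributed among the $|\alpha|-\ell$ surviving pairs. The hypothesis $|\alpha|>2\gamma+2-\ell$ is exactly the inequality $2(|\alpha|-\gamma-1)>|\alpha|-\ell$, so by the pigeonhole principle some surviving pair lies wholly in $\Sigma$. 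Applying \eqref{addingsemigroupelements1} or \eqref{addingsemigroupelements2} yields $(\alpha-e_j)[j]\in\Sigma$ for some $j$ with $\alpha_j\ge 2$, and Remark~\ref{heartproof} then delivers $\lambda(\alpha)\le\gamma$.

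The delicate point, and where I would be most careful, is the bookkeeping that makes the savings come out to \emph{exactly} $\ell$. One must check that the forced non-$\Sigma$ elements from the unit coordinates are genuinely the ones sitting in the discardable $f$- and $g$-blocks (rather than in the shared $b,c$-portion counted twice), that they are distinct slots, and that removing them lowers both the slot count and the pigeonhole threshold by $\ell$ so that the final inequality is tight against the hypothesis. It is equally worth re-verifying the conventions $\lambda(0)=0$ and the standardness identity $\lambda(\beta)=\lambda(\beta+e_k)-1$ for $\beta_k=0$, $\beta\ne 0$, since these are precisely what guarantee the unit-coordinate steps are strict increases rather than being silently absorbed into the count.
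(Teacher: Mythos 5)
Your proof is correct and follows essentially the same route as the paper: both run Construction~\ref{con} to produce the two sequences \eqref{eqn:s1} and \eqref{eqn:s2} and then apply the pigeonhole count, with standardness supplying exactly the extra savings of $\ell$. The only difference is bookkeeping --- the paper shortens the path, taking $b^1=(1,\ldots,1)$ and $c^0=\alpha-\sum_{i=1}^q e_i$ so that $\lambda(b^1)=r-1$ and $\lambda(c^0)\leq\gamma$ by hypothesis, whereas you keep the full path from $0$ to $\alpha$ and instead discard the $\ell$ slots at the unit coordinates where standardness forces a strict increase; both reduce to the same inequality $2(|\alpha|-\gamma-1)>|\alpha|-\ell$.
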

\begin{proof}
Without loss of generality, we will reorder the coordinates of $\alpha$ so that $\alpha_1,\dotsc,\alpha_p$ are odd and larger than $1$, $\alpha_{p+1},\ldots,\alpha_{q}$ are even, and $\alpha_{q+1},\ldots,\alpha_r$ all equal $1$. 
Notice that $\ell=r-q<r$.

Similar to the proof of Lemma \ref{lemma:key}, we will apply Construction \ref{con}, this time to
\[
b^1 = (1,\ldots,1)\qquad c^0 = \alpha-\sum_{i=1}^q e_i.\]
Our reordering of the coordinates above agrees with the reordering in Construction \ref{con}. The construction leads to potential elements of $\Sigma$ in \eqref{eqn:s1} and \eqref{eqn:s2}.

As before, by adding semigroup elements, we derive \eqref{addingsemigroupelements1} and \eqref{addingsemigroupelements2}.
By Remark~\ref{heartproof}, having one of these elements on the right hand side belong to $\Sigma$ implies $\lambda(\alpha)\leq \gamma$.

We now count how many of the elements of \eqref{eqn:s1} and \eqref{eqn:s2} must actually belong to $\Sigma$. 
Since $\lambda(c^0)\leq \gamma$ by assumption,
$\lambda$ increases at most $\gamma-(r-1)$ times along the path from $b^1$ to $c^0$. Remark~\ref{heartproof} then implies that
$(|c^0|-r)-(\gamma-(r-1))=|\alpha|-q-(\gamma+1)$ of the elements of \eqref{eqn:s1}, as well as $|\alpha|-q-(\gamma+1)$ of the elements of \eqref{eqn:s2}, must belong to $\Sigma$.
Thus, from the multiset consisting of \eqref{eqn:s1} and \eqref{eqn:s2},
we count at least $2\left(|\alpha|-q-(\gamma+1)\right)$ elements in $\Sigma$.
This multiset partitions into $|\alpha|-r-q$ pairs of elements, each of which, if belonging to $\Sigma$, implies $\lambda(\alpha)\leq \gamma$.
But 
\[
2(|\alpha|-q-(\gamma+1))>|\alpha|-r-q>0
\]
since $|\alpha|>2\gamma+2-(r-q)$, and $\gamma\geq r-1$. So by the pigeonhole principle, one such pair belongs to $\Sigma$, and we are done.
\end{proof}

\subsection{Convergence}\label{sec:convergence}
The ring $S$, being a product of power series rings, is complete with respect to the topology induced by the ideal generated by $t_1,\ldots,t_r$; see e.g. \cite[Chapter 7]{eisenbudCommAlg}. We say that $R\subset S$ is a {\it complete subalgebra} of $S$ if $R$ is a subalgebra of $S$ which is complete in the same topology as $S$. In this situation, we show the following. 

\begin{lemma}\label{lemma:convergence}
Assume that $R$ is a complete subalgebra of $S$, that $\lambda=\lambda_R$ is a standard gap function, and that $\dim S/R$ is finite.
Fix some $1\leq i \leq r$. Consider $\alpha\in \NN^r$ such that for any $\alpha'\in \NN^r$ with $\alpha'_i=\alpha_i$ and $\alpha'_j\ge \alpha_j$ for all $j\ne i$,
\[
\lambda(\alpha'+e_i)=\lambda(\alpha').\]
Then for some unit $u\in \KK[[t_i]]$,  $u\cdot t_i^{\alpha_i}\in R$.
\end{lemma}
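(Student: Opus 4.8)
The plan is to combine completeness and finiteness to pin down a power of the maximal ideal inside $R$, and then to feed the hypothesis on $\lambda$ through Remark~\ref{rem:sigma}. Write $\mfm=(t_1,\dots,t_r)$ for the maximal ideal of $S$, so that $\mfm^n=\prod_{j=1}^r t_j^{\,n}\KK[[t_j]]$ and these ideals form a neighborhood basis of $0$ for the topology on $S$. First I would prove the auxiliary claim that $\mfm^{n_0}\subseteq R$ for some $n_0$, and then use it to cancel all coordinates of a suitable element of $R$ except the $i$-th one.

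To establish this claim, I would look at the images of the descending chain $\mfm^1\supseteq\mfm^2\supseteq\cdots$ under the projection $S\to S/R$. Since $\dim_\KK S/R$ is finite, these images form a descending chain of subspaces of a finite-dimensional space, so they stabilize: there is an $n_0$ with $\im(\mfm^n\to S/R)=\im(\mfm^{n_0}\to S/R)$ for all $n\ge n_0$. The key step is to show this stable image vanishes. Given $x\in\mfm^{n_0}$, stabilization lets me write, for every $n\ge n_0$, $x=r_n+y_n$ with $r_n\in R$ and $y_n\in\mfm^n$; as $y_n\to 0$ we get $r_n=x-y_n\to x$, and completeness of $R$ forces $x\in R$. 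Hence $\mfm^{n_0}\subseteq R$. This auxiliary claim is exactly where completeness is used, and I expect it to be the main obstacle; the remaining steps are bookkeeping with valuations and the semigroup $\Sigma$.

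With the claim in hand, I would set $N=\max\{\,n_0,\ \alpha_j\ (j\ne i)\,\}$ and apply the hypothesis to $\alpha'\in\NN^r$ defined by $\alpha'_i=\alpha_i$ and $\alpha'_j=N$ for $j\ne i$. Since $\alpha'_j\ge\alpha_j$ for $j\ne i$, the hypothesis gives $\lambda(\alpha'+e_i)=\lambda(\alpha')$, which by Remark~\ref{rem:sigma} means $\alpha'[i]\in\Sigma=\val(R)$. Thus there is $g\in R$ with $\val_i(g)=\alpha_i$ and $\val_j(g)\ge N\ge n_0$ for all $j\ne i$. Finally I would split $g=(g_1,\dots,g_r)$ as $g=g^{(i)}+g'$, where $g^{(i)}$ keeps only the $i$-th coordinate $g_i$ (zero elsewhere) and $g'$ kills the $i$-th coordinate. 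Every nonzero coordinate of $g'$ has valuation at least $n_0$, so $g'\in\mfm^{n_0}\subseteq R$, whence $g^{(i)}=g-g'\in R$. Since $\val_i(g_i)=\alpha_i$, we have $g_i=u\,t_i^{\alpha_i}$ for a unit $u\in\KK[[t_i]]$, and $g^{(i)}=u\cdot t_i^{\alpha_i}$ in the notation of Definition~\ref{defn:gap_function}, giving the desired element of $R$.
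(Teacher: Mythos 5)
Your proof is correct, but it takes a genuinely different route from the paper's. The paper picks a point $b$ where $\lambda$ attains its maximum $\gamma$, so that $b'[j]\in\Sigma$ for every $b'\in b+\ZZ_{\geq 0}^r$ and every $j$; it then starts from an element $s_0$ supplied by the hypothesis and runs an explicit iterative correction, subtracting constant multiples of suitably chosen elements of $R$ to drive $\val_j$ to infinity for each $j\neq i$, with completeness used to pass to the limit of the resulting Cauchy sequence. You instead isolate the auxiliary claim that $\mathfrak{m}^{n_0}\subseteq R$ for some $n_0$ (where $\mathfrak{m}=\langle t_1,\dots,t_r\rangle$ is the Jacobson radical of $S$ --- note $S$ is not local for $r>1$, so ``maximal ideal'' is a slight misnomer), proved by stabilization of the descending chain of images of $\mathfrak{m}^n$ in the finite-dimensional space $S/R$ together with closedness of the complete subspace $R$; after that, a single application of the hypothesis via Remark~\ref{rem:sigma} produces $g\in R$ with $\val_i(g)=\alpha_i$ and $\val_j(g)\geq n_0$ for $j\neq i$, and subtracting the off-$i$ part of $g$ (which lies in $\mathfrak{m}^{n_0}\subseteq R$) finishes in one step. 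Your version is arguably cleaner: it localizes the use of completeness in a single reusable statement (essentially, that the conductor of $R$ in $S$ contains a power of $\mathfrak{m}$) and replaces the coordinate-by-coordinate approximation with a direct truncation. What it gives up is any effective control on $n_0$; the paper's hands-on iteration is what later lets Lemma~\ref{lemma:d2} extract the explicit bound $t_i^k\in R$ for $k\geq 2\delta_{S/R}$, whereas your $n_0$ comes from an abstract stabilization argument. As a proof of the stated lemma, everything checks out: the chain $(\mathfrak{m}^n+R)/R$ does stabilize, the Cauchy sequence $r_n=x-y_n$ does converge to $x$ inside the complete (hence closed) subspace $R$, and $\mathfrak{m}^{n}=\prod_j t_j^{n}\KK[[t_j]]$ so your $g'$ indeed lies in $\mathfrak{m}^{n_0}$.
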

\begin{proof}

Since $\dim S/R$ is finite, the function $\lambda$ has a maximum value $\gamma$. Let $b\in \NN^r$ be such that $\lambda(b)=\gamma$. Then for any element $b'\in b+\ZZ_{\geq 0}^r$, we know by Remark~\ref{rem:sigma} that for any $j$, $b'[j]$ is in $\Sigma$. 

We inductively construct a sequence of elements in $R$ converging (in $S$, and thus in $R$) to $t_i^{\alpha_i}$ times a unit. 
Choose $\alpha'\in \NN^r$ with $\alpha'\geq \alpha$, $\alpha'_i=\alpha_i$, and $\alpha'_j\geq b_j$ for all $j\neq i$. By our hypothesis on $\alpha$, we know that $\alpha'[i]\in \Sigma$. Thus, there is an element $s_0\in R$ with $\val_i(s_0)=\alpha_i$ and $\val_j(s_0)\geq \alpha'_j\geq b_j$ for $j\neq i$.

Suppose given $m\in \NN^r$ with $m_i\geq \alpha_i$ (for our fixed $i$), we have constructed an element $s$ of $R$ with $\val_i(s)=\alpha_i$ and $\val_j(s)\geq \max\{m_j,b_j\}$ for all $j\neq i$.
Fix some $k\neq i$. By our choice of $b$, and Remark~\ref{rem:sigma}, there exists $f\in R$ with $\val_k(f)=\val_k(s)$, $\val_i(f)>m_i$, and with $\val_j(f)>\val_j(s)$ for $j\neq i,k$. 

Furthermore, there exists a constant $c\in \KK$ such that 
\[s'=s+cf\]
has $\val_k(s')>\val_k(s)$.
Starting with $s=s_0$ and iteratively replacing $s$ by $s'$ as we vary $k$ leads to a convergent sequence. Its limit $t$ has $\val_i(t)=\alpha_i$, and $\val_j(t)=\infty$ for $j\neq i$; the claim of the lemma follows. 
\end{proof}

\begin{lemma}\label{lemma:d2}
Assume that $R$ is a complete subalgebra of $S$, that $\dim S/R$ is finite, and that $\lambda=\lambda_R$ is a standard gap function. Set 
\[
\delta_{S/R} =\dim S/R.
\]
Then there exists $\alpha\in \NN^r$ with $|\alpha|\leq 2\delta_{S/R} $
such that $\lambda(\alpha)= \delta_{S/R} $.
In particular, $\delta(\lambda)=\delta_{S/R}$. Furthermore, $t_i^{k}\in R$ for all $k\geq 2\delta_{S/R}$.
\end{lemma}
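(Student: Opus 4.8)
The plan is to establish the three assertions in order, reducing at once to the case $\delta_{S/R}\geq 1$ (if $\delta_{S/R}=0$ then $R=S$ and everything is trivial). The inequality $\delta(\lambda)\leq\delta_{S/R}$ is immediate: for every $\alpha$ the inclusion $R\subseteq R+\langle t_1^{\alpha_1},\ldots,t_r^{\alpha_r}\rangle$ induces a surjection $S/R\twoheadrightarrow S/(R+\langle t_1^{\alpha_1},\ldots,t_r^{\alpha_r}\rangle)$, whence $\lambda(\alpha)\leq\dim S/R=\delta_{S/R}$. To see the bound is attained I would invoke completeness. Writing $\mathfrak{m}=\langle t_1,\ldots,t_r\rangle$, the subspaces $(R+\mathfrak{m}^m)/R$ of the finite-dimensional space $S/R$ form a decreasing chain, hence stabilize for $m\geq m_0$; this gives $\mathfrak{m}^{m_0}\subseteq R+\mathfrak{m}^m$ for all $m$, so $\mathfrak{m}^{m_0}\subseteq\bigcap_m(R+\mathfrak{m}^m)=R$, the final equality because the complete subalgebra $R$ is closed in $S$. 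Since $\mathfrak{m}^{m_0}=\langle t_1^{m_0},\ldots,t_r^{m_0}\rangle$, we obtain $\lambda((m_0,\ldots,m_0))=\delta_{S/R}$, so $\delta(\lambda)=\delta_{S/R}$.

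To locate such a point of small weight I would feed $\gamma=\delta_{S/R}-1$ into the Key Lemma (Lemma~\ref{lemma:key}) contrapositively. Were $\lambda(\alpha)\leq\delta_{S/R}-1$ to hold for all $\alpha$ with $|\alpha|\leq 2\gamma+2=2\delta_{S/R}$, the lemma would force $\delta(\lambda)\leq\delta_{S/R}-1$, contradicting the previous paragraph. Hence there exists $\alpha^{*}$ with $|\alpha^{*}|\leq 2\delta_{S/R}$ and $\lambda(\alpha^{*})\geq\delta_{S/R}$, so in fact $\lambda(\alpha^{*})=\delta_{S/R}$. As $\lambda$ is non-decreasing in each coordinate (Remark~\ref{rem:sigma}) and bounded above by $\delta_{S/R}$, it is constant equal to $\delta_{S/R}$ on the whole orthant $\alpha^{*}+\ZZ_{\geq 0}^r$. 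This proves the first two assertions.

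For the last assertion, fix an index $i$ and an integer $k\geq 2\delta_{S/R}$. The crucial point is that $\alpha^{*}_i\leq|\alpha^{*}|\leq 2\delta_{S/R}\leq k$. Define $\eta\in\NN^r$ by $\eta_i=k$ and $\eta_j=\max(\alpha^{*}_j,1)$ for $j\neq i$; then any $\alpha'$ with $\alpha'_i=k$ and $\alpha'_j\geq\eta_j$ satisfies $\alpha'\geq\alpha^{*}$, so $\lambda(\alpha')=\lambda(\alpha'+e_i)=\delta_{S/R}$. Lemma~\ref{lemma:convergence} applied at $\eta$ then yields a unit $u_k\in\KK[[t_i]]$ with $u_k t_i^{k}\in R$. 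Performing this for every $k\geq 2\delta_{S/R}$ produces a family $\{u_m t_i^{m}\}_{m\geq 2\delta_{S/R}}\subseteq R$, each element having leading term $t_i^{m}$. I would then solve recursively for scalars $c_m$ so that the $(t_i)$-adically convergent series $\sum_{m\geq k}c_m u_m t_i^{m}$ cancels all higher-order terms; its value is $t_i^{k}$, and since $R$ is complete this limit lies in $R$.

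The main obstacle is the weight bound in the second paragraph. The conductor-style argument of the first paragraph only exhibits the value $\delta_{S/R}$ at the a priori large point $(m_0,\ldots,m_0)$, whereas the sharp estimate $|\alpha^{*}|\leq 2\delta_{S/R}$ is exactly what the Key Lemma delivers; moreover this bound is precisely what the final step needs, since the application of Lemma~\ref{lemma:convergence} requires $\alpha^{*}_i\leq k$. Verifying that the recursively defined series genuinely converges and reconstructs $t_i^{k}$ is routine once completeness is in hand.
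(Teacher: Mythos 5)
Your proof is correct and follows the same overall skeleton as the paper's: first establish that the maximum of $\lambda$ equals $\dim S/R$, then derive the weight bound $|\alpha|\leq 2\delta_{S/R}$ by feeding $\gamma=\delta_{S/R}-1$ into a Key Lemma contrapositively, and finally obtain $t_i^k\in R$ from Lemma~\ref{lemma:convergence} together with completeness. Two of your choices genuinely diverge from the paper. First, the paper proves $\delta(\lambda)=\delta_{S/R}$ by applying Lemma~\ref{lemma:convergence} at an arbitrary maximizer of $\lambda$ to conclude that $t_i^k\in R$ for all large $k$, and only then reads off $\lambda(\alpha)=\dim S/R$; your conductor-style argument (stabilization of the chain $(R+\mfm^m)/R$ inside the finite-dimensional space $S/R$, plus closedness of the complete subalgebra $R$, yielding $\mfm^{m_0}\subseteq R$) is more elementary and decouples this step from the convergence machinery. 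Second, you invoke Lemma~\ref{lemma:key} where the paper invokes Lemma~\ref{lemma:help}; this is arguably cleaner, since Lemma~\ref{lemma:help} carries the hypothesis $\gamma\geq r-1$, i.e.\ $\delta_{S/R}\geq r$, which the paper's remark that $\delta_{S/R}\geq r-1$ does not quite supply (the borderline case $\delta_{S/R}=r-1$ must be handled separately via $\alpha=(1,\ldots,1)$), whereas Lemma~\ref{lemma:key} has no such hypothesis. The only price of your choice is that Lemma~\ref{lemma:key} delivers a point $\alpha^{*}\in\ZZ_{\geq 0}^r$ while the statement asks for $\alpha\in\NN^r$; you should add the one-line observation that a point where $\lambda$ attains its maximum $\delta_{S/R}\geq 1$ cannot have a vanishing coordinate, because for a standard gap function $\lambda(\alpha)=\lambda(\alpha+e_i)-1$ whenever $\alpha_i=0$ and $\alpha\neq 0$, which would force a value exceeding the maximum. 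With that remark inserted your argument is complete; the concluding recursive cancellation producing $t_i^k$ is exactly the paper's step of ``multiplying by appropriate constants.''
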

\begin{proof}
Since $\dim S/R$ is finite, $\lambda$ has a maximum $\gamma$. So there exists some $\alpha\in \NN^r$ with $\lambda(\alpha)=\gamma$. Then $\alpha$, or any translate by $\ZZ^r_{\geq 0}$, satisfies the requirements for  Lemma~\ref{lemma:convergence}.

We then conclude that for $k\geq \alpha_i$, $u_k\cdot t_i^k\in R$ for some unit $u_k$ of $S$. Now, by multiplying by appropriate constants, we may construct sequences converging to $t_i^k$ for any $k\geq \alpha_i$.
This in turn implies that $\lambda(\alpha)=\delta_{S/R}$,
since 
\[
	S/R=S/(R+\langle t_1^{\alpha_1},\ldots,t_r^{\alpha_r}\rangle).
\]

We show we can choose $\alpha$ as in the statement. The assumption on $\lambda$ implies $\delta_{S/R}\geq r-1$.
By contradiction, suppose that no such $\alpha$ exists. Then $\lambda(\alpha)\leq \delta_{S/R}-1$ for all $\alpha\in \NN^r$ satisfying $|\alpha|\leq 2\delta_{S/R}$. Thus, taking $\gamma=\delta_{S/R}-1$ in Key Lemma~\ref{lemma:help}, we obtain $\lambda(\alpha)\leq \delta_{S/R}-1$ for all $\alpha$. However, this is impossible in view of the first part of this proof. The final claim of the lemma now also follows from the first part of this proof.
\end{proof}

\section{Curves, Osculating Flags, and Multifiltrations}\label{sec:osc}
\subsection{Constructing the Multifiltrations}
Let $X$ be a smooth projective curve of geometric genus $\rho_g$, and $\mcL$ a line bundle on $X$ of degree $d\geq 1$. Set $W=H^0(X,\mcL)$.
For any $0\leq i\leq d+1$, and any point $P\in X$, we let $W^i(P)\subset W$ be the vector space of sections of $\mcL$ which vanish to order at least $i$ at $P$. In other words, we may think of $W^i(P)$ as the subspace 
\[
W^i(P)=H^0(X,\mcL(-i\cdot P))\subseteq H^0(X,\mcL)=W.
\]
Dually, we set $V=W^*$ and take 
\[V^i(P)=(W^i(P))^\perp\subseteq V.\]
We thus obtain filtrations
\begin{align*}
	W=W^0(P)\supseteq W^{1}(P)\supseteq\ldots\supseteq W^{d+1}(P)= 0\\
	0=V^0(P)\subseteq V^{1}(P)\subseteq\ldots\subseteq V^{d+1}(P)= V
\end{align*}
of $W$ and $V$. 
For $i\leq 0$ or $i\geq d+1$, we set $V^i(P)=0$ and $V^i(P)=V$, respectively.

It is straightforward to see that
\[
\dim W^i(P)\leq \dim W^{i-1}(P)  \leq \dim W^i(P)+1
\]
for any $1\leq i \leq d$. By the theorem of Riemann-Roch, 
\[
\dim W\geq d+1-\rho_g
\]
so there are at most $\rho_g$ indices $1\leq i \leq d+1$ for which $\dim W^i(P)=\dim W^{i-1}(P)$. 
Dually,  
\[
\dim V^i(P)\leq \dim V^{i+1}(P)  \leq \dim V^i(P)+1
\]
for any $0\leq i \leq d$, and there are at most $\rho_g$ indices $0\leq i \leq d$ for which $\dim V^i(P)=\dim V^{i+1}(P)$. In particular,
\begin{equation}\label{eqn:dim1}
i-\rho_g \leq \dim V^i(P)\leq i
\qquad 0\leq i \leq d+1. 
\end{equation}
\begin{rem}[Ramification indices and osculating spaces]\label{rem:osc}
Given $P\in X$, one may consider the set
\[
\Rf_P=	\{\ord_P(s)\ |\ s\in W\}=\{i\ |W^i\neq W^{i+1}\}.
\]
This determines a non-decreasing sequence $0\leq\ram_0(P)\leq \ram_1(P)\leq\ldots\leq \ram_{\dim W-1}(P)$ by the formula
$\Rf_P=\{i+\ram_i(P)\}.$ The $\ram_i(P)$ are called the ramification indices of $\mcL$ at $P$, and $\dim V^{i+\ram_i(P)}(P)=i$. The image of $V^{i+\ram_i(P)}(P)$ in $\PP(V)$ is often called the osculating $(i-1)$st plane of the image of $X$ under the complete linear system $W\subset H^0(X,\mcL)$.
See e.g.~\cite{eh1} for more details.  
\end{rem}
\begin{ex}
	When $X=\PP^1$, \eqref{eqn:dim1} implies that $\dim V^i(P)=i$ for $0\leq i \leq d+1$.
\end{ex}

Consider now $r$ distinct points $P_1,\ldots,P_r\in X$. These points determine a $\ZZ^r$-graded multifiltration $\F^{\bullet}(P_1,\ldots,P_r)$ of $V$, where for $\alpha=(\alpha_1,\ldots,\alpha_r)\in\ZZ^r$ we set
\[
	\F^{\alpha}=\F^{\alpha}(P_1,\ldots,P_r)=\langle V^{\alpha_1}(P_1),\ldots,V^{\alpha_r}(P_r)\rangle.
\]
Recall that for $\alpha\in \ZZ^r$, we set $|\alpha|=\sum_{i=1}^r \alpha_i$.
\begin{lemma}\label{lemma:bound}
	For $\alpha\in \ZZ_{\geq 0}^r$ with $|\alpha|\leq d+1$, we have
\[
	|\alpha|-\rho_g\leq \dim \F^{\alpha}\leq |\alpha|.
\]
Furthermore, if $|\alpha|\leq d+1-2\rho_g$, then 
\[
	\dim \F^{\alpha}= |\alpha|.
\]

\end{lemma}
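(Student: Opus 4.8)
The plan is to bound $\dim \F^{\alpha}$ by relating the multifiltration to the individual osculating filtrations and their intersections. First I would establish the upper bound $\dim \F^\alpha \leq |\alpha|$: since $\F^\alpha = \langle V^{\alpha_1}(P_1),\ldots,V^{\alpha_r}(P_r)\rangle$ is a sum of subspaces, by subadditivity of dimension under sums we have $\dim \F^\alpha \leq \sum_i \dim V^{\alpha_i}(P_i) \leq \sum_i \alpha_i = |\alpha|$, using the upper bound in \eqref{eqn:dim1}. This part is essentially immediate.

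The lower bound is more substantial and is where the hypothesis $|\alpha|\leq d+1$ enters. My approach is to pass to the dual picture, where sums of the $V^{\alpha_i}(P_i)$ correspond to intersections of the $W^{\alpha_i}(P_i) = H^0(X,\mcL(-\alpha_i P_i))$. Concretely, $(\F^\alpha)^\perp = \bigcap_i W^{\alpha_i}(P_i) = H^0\bigl(X, \mcL(-\sum_i \alpha_i P_i)\bigr) = W^{\sum}$, the sections vanishing to order $\alpha_i$ at each $P_i$, i.e. sections of a line bundle $\mcL'$ of degree $d - |\alpha|$. Thus $\dim \F^\alpha = \dim W - \dim H^0(X,\mcL')$. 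By Riemann--Roch, $\dim H^0(X,\mcL') = (d-|\alpha|) + 1 - \rho_g + h^1(\mcL')$, and $\dim W = h^0(\mcL) = d+1-\rho_g + h^1(\mcL)$. Subtracting gives
\[
\dim \F^\alpha = |\alpha| + h^1(\mcL) - h^1(\mcL').
\]
Since $h^1(\mcL) \geq 0$ and $h^1(\mcL') \leq \rho_g$ (as $h^1$ of any line bundle is at most the genus), this yields $\dim \F^\alpha \geq |\alpha| - \rho_g$, giving the lower bound. The condition $|\alpha|\leq d+1$ ensures $\deg \mcL' = d-|\alpha| \geq -1$ so the degree computation behaves as expected and the relevant $h^0$ is the one computed.

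For the final sharper claim, when $|\alpha|\leq d+1-2\rho_g$ I want to show $h^1(\mcL) = h^1(\mcL')$, so that the correction term vanishes and $\dim\F^\alpha = |\alpha|$. By Serre duality $h^1(\mcL') = h^0(K_X \otimes (\mcL')^{-1})$, where $K_X \otimes (\mcL')^{-1}$ has degree $2\rho_g - 2 - (d-|\alpha|)$. When $|\alpha| \leq d+1 - 2\rho_g$ this degree is at most $-1 < 0$, forcing $h^1(\mcL')=0$; and since $h^1(\mcL) \geq 0$ we would then need to control $h^1(\mcL)$ directly. The hard part will be handling $h^1(\mcL)$ cleanly: the clean statement $\dim\F^\alpha = |\alpha|$ suggests working with the stronger hypothesis to force \emph{both} $h^1$ terms to vanish, or alternatively avoiding Riemann--Roch entirely and arguing geometrically via general position of the osculating spaces. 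I expect the main obstacle to be justifying that the sum of the $V^{\alpha_i}(P_i)$ is direct (dimensions add) under the degree constraint; the slickest route is likely the direct vanishing argument on $\mcL'$ combined with the observation that the degree bound $|\alpha| \leq d+1-2\rho_g$ is exactly what is needed to kill the cohomological obstruction, after which the dimension count follows formally.
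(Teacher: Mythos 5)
Your argument is essentially the paper's: both dualize $\F^\alpha$ to $\bigl(\bigcap_i W^{\alpha_i}(P_i)\bigr)^\perp$ with $\bigcap_i W^{\alpha_i}(P_i)=H^0\bigl(X,\mcL(-\sum\alpha_iP_i)\bigr)$ and run a dimension count, the paper using the inequalities $h^0(\mcL')\le d-|\alpha|+1$ and $\dim W\ge d+1-\rho_g$ where you phrase the same content as $\dim\F^\alpha=|\alpha|+h^1(\mcL)-h^1(\mcL')$. Two small points. First, your justification of $h^1(\mcL')\le\rho_g$ by ``$h^1$ of any line bundle is at most the genus'' is false in general (e.g.\ $h^1(\mathcal O_{\PP^1}(-n))=n-1$ exceeds $\rho_g=0$); it holds here only because $\deg\mcL'=d-|\alpha|\ge-1$, whence $h^0(\mcL')\le\deg\mcL'+1$ and Riemann--Roch give $h^1(\mcL')\le\rho_g$ --- which is in substance exactly the inequality the paper uses directly. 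Second, the step you leave open as ``the hard part,'' controlling $h^1(\mcL)$, is immediate: since $|\alpha|\ge0$, the hypothesis $|\alpha|\le d+1-2\rho_g$ forces $d\ge2\rho_g-1$, so $\deg\bigl(K_X\otimes\mcL^{-1}\bigr)\le-1$ and $h^1(\mcL)=0$ by Serre duality, exactly as for $\mcL'$. With that one line added your proof is complete and coincides with the paper's.
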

\begin{proof}
	The upper bound on $\dim \F^{\alpha}$ is immediate from the upper bound of \eqref{eqn:dim1}. For the lower bound, we proceed as follows.
	Observe that
\[
	\F^{\alpha}=\left(\bigcap_{i=1}^r W^{\alpha_i}(P_i)\right)^\perp=H^0\left(X,\mcL\left(-\sum \alpha_i P_i\right)\right)^\perp.
\]
Since
\[	\dim H^0\left(X,\mcL\left(-\sum \alpha_i P_i\right)\right)\leq d+1-|\alpha|\]
and $\dim W\geq d+1-\rho_g$, we obtain
\begin{align*}
\dim \F^\alpha=\dim W-
	\dim H^0\left(X,\mcL\left(-\sum \alpha_i P_i\right)\right)\\
\geq (d+1-\rho_g)-(d+1-|\alpha|)\\=|\alpha|-\rho_g.
\end{align*}
This gives the first inequality.

Assuming additionally that $|\alpha|\leq d+1-2\rho_g$, we obtain that
\[	\dim H^0\left(X,\mcL\left(-\sum \alpha_i P_i\right)\right)= d+1-|\alpha|-\rho_g\]
and the desired equality follows.
\end{proof}

\subsection{Relation to Singularities}\label{sec:rsing}
Fix some proper subspace $L$ of $V$ of codimension $n+1\geq 3$. Dually, we have a linear system $M:=L^\perp\subseteq W=H^0(X,\mcL)$ giving rise to a (non-constant) morphism 
\[
\phi \colon X\to \PP^n
\]
whose image is $C\subset \PP^n$. We will make the strong assumption that $\phi\colon X \to C$ is birational.
Furthermore,
we will assume that the linear system $M$ is basepoint free. This will be the case if e.g.~$\mcL\cong \phi^*(\CO_{\PP^n}(1))$, which in particular implies that $\deg \mcL=d=\deg C$.
We are interested in understanding the singularities of $C$.

Fix  points $Q_1,\ldots,Q_m$ of $C$, and let $\phi^{-1}(Q_i)$ consist of $P_{i1},\ldots,P_{ir_i}\in X$. Here, $r_i$ is the number of branches of the curve $C$ at $Q_i$. 
Let $R_i=\widehat\CO_{C,Q_i}$ be the completion of the local ring of the curve $C$ at $Q_i$. The ring $R_i$ sits inside its normalization, which coincides with the product $S_i$ of the completions of the local rings of $X$ at $P_{ij}$:
\[
	R_i=\widehat\CO_{C,Q_i}\hookrightarrow S=\prod_{j=1}^{r_i} \widehat\CO_{X,P_{ij}}\cong \prod_{j=1}^{r_i}\KK[ [t_{ij}]].
\]
The \emph{singularity degree} of $C$ at $Q_i$ is
\[
\delta(Q_i):=\dim S_i/R_i.
\]
Note that $\delta(Q_i)=0$ if and only if $Q_i$ is a smooth point of $C$. 
Letting $\rho_a$ denote the arithmetic genus of $C$, we then have
\[
	\rho_a=\rho_g+\sum_{Q\in C} \delta(Q),
\]
see e.g.~\cite[Exercise III.1.8]{hartshorne} and \cite[Lemma 3]{hironaka}.

Let $s\in M\subset H^0(X,\mcL)$ be a section which does not vanish at any $P_{ij}$ for $i=1,\ldots,m$, $j=1,\ldots,r_i$. This exists since the linear system $M$ is basepoint free by our assumptions. 
Then the $\KK$-vector space
\[
	R':=\frac{1}{s}\cdot M
\]
sits inside $\CO_{C,Q_i}$ for each $i$, and hence inside of $R_i$ and $S_i$.
We may thus view $R'=(1/s)M$ as a subspace of 
\[
	S=\prod_{i=1}^m S_i=\prod_{i=1}^m\prod_{j=1}^{r_i} \KK[[t_{ij}]].
\]
For \[\alpha=(\alpha_{ij})_{\substack{1\leq i \leq m\\1\leq j \leq r_i}}\in \ZZ_{\geq 0}^{r_1}\times\ldots\times\ZZ_{\geq 0}^{r_m}\] we set
\[
	\mu(\alpha)=\dim R'/\left(R'\cap\big\langle t_{ij}^{\alpha_{ij}}\ |\ 1\leq i \leq m,\ 1\leq j\leq r_i\big\rangle\right).
\]
Here 
$\langle t_{ij}^{\alpha_{ij}}\rangle$ denotes the ideal of $\prod S_i$ generated by $t_{ij}^{\alpha_{ij}}$ 
  (viewed as an element of the subring $\KK[[t_{ij}]]$) for $1\leq i \leq m$, $1\leq j \leq r_i$.
Note as in Definition~\ref{defn:gap_function} that the element $t_{ij}^0$ is not the multiplicative unit of 
	$\prod_{i=1}^m S_i$ (unless $m=r_1=1$).
We are viewing $R'=(1/s)M$ as a sub-vector space of $\prod S_i$, and the quotient is taken in the category of $\KK$-vector spaces.

We now consider the multifiltration $\F^{\bullet}=\F^{\bullet}(P_{11},\ldots,P_{mr_m})$ of $V$.
\begin{lemma}\label{lemma:mu}
	For $\alpha\in\ZZ_{\geq 0}^{r_1}\times\ldots\times\ZZ_{\geq 0}^{r_m}$ we have
	\[
\mu(\alpha)=\dim\F^\alpha-\dim (L\cap \F^\alpha).
	\]
\end{lemma}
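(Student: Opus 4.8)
The plan is to pass everything to the dual vector space $W=H^0(X,\mcL)$ via orthogonal complements, reducing the statement to a purely linear-algebraic identity. Write $W_\alpha := \bigcap_{i,j} W^{\alpha_{ij}}(P_{ij}) = H^0(X,\mcL(-\sum_{i,j}\alpha_{ij}P_{ij}))$. As already observed in the proof of Lemma~\ref{lemma:bound}, $\F^\alpha = W_\alpha^\perp$. Since $M=L^\perp$ and $W$ is finite dimensional, reflexivity of the annihilator gives $L=M^\perp$, and hence
\[
L\cap\F^\alpha = M^\perp\cap W_\alpha^\perp=(M+W_\alpha)^\perp,
\]
using the identity $U_1^\perp\cap U_2^\perp=(U_1+U_2)^\perp$.

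Next I would compute the right-hand side of the asserted formula. Using $\dim U^\perp=\dim W-\dim U$ together with the previous identity,
\[
\dim\F^\alpha-\dim(L\cap\F^\alpha)=\dim(M+W_\alpha)-\dim W_\alpha=\dim M-\dim(M\cap W_\alpha),
\]
where the last equality is the standard inclusion--exclusion for subspaces. Thus it suffices to prove that $\mu(\alpha)=\dim M-\dim(M\cap W_\alpha)$.

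For this, note that multiplication by $1/s$ is a $\KK$-linear isomorphism $M\xrightarrow{\sim} R'$, because $s$ is a unit of $S$ (it vanishes at none of the $P_{ij}$). In particular $\dim R'=\dim M$, so that
\[
\mu(\alpha)=\dim R'-\dim\big(R'\cap\langle t_{ij}^{\alpha_{ij}}\rangle\big)=\dim M-\dim\big(R'\cap\langle t_{ij}^{\alpha_{ij}}\rangle\big).
\]
The heart of the argument is to identify the last intersection. The ideal $\langle t_{ij}^{\alpha_{ij}}\rangle$ equals the product $\prod_{i,j}(t_{ij}^{\alpha_{ij}})$ of the individual coordinate ideals, so an element $f=(f_{ij})\in S$ lies in it precisely when $\val_{ij}(f_{ij})\geq\alpha_{ij}$ for every $i,j$. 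Since $s$ does not vanish at $P_{ij}$, we have $\ord_{P_{ij}}(m/s)=\ord_{P_{ij}}(m)$ for every $m\in M$; hence $m/s\in\langle t_{ij}^{\alpha_{ij}}\rangle$ if and only if $\ord_{P_{ij}}(m)\geq\alpha_{ij}$ for all $i,j$, that is, if and only if $m\in W_\alpha$. Therefore the isomorphism $m\mapsto m/s$ carries $M\cap W_\alpha$ onto $R'\cap\langle t_{ij}^{\alpha_{ij}}\rangle$, these spaces have equal dimension, and the desired formula follows.

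The only genuinely delicate point is the bookkeeping around the nonstandard ideal $\langle t_{ij}^{\alpha_{ij}}\rangle$: because $t_{ij}^0$ is not the unit of $S$ but the idempotent supported on the $(i,j)$-factor (as flagged in Definition~\ref{defn:gap_function}), one must verify that this ideal is exactly the product of the coordinate ideals and translate the membership condition into orders of vanishing of honest sections of $\mcL$. Everything else is formal manipulation of annihilators, valid because $W$ is finite dimensional.
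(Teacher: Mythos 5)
Your proof is correct and follows essentially the same route as the paper's: both reduce the identity, via standard annihilator/dimension bookkeeping, to the key observation that multiplication by $1/s$ carries $M\cap(\F^\alpha)^\perp = M\cap W_\alpha$ isomorphically onto $\frac{1}{s}M\cap\langle t_{ij}^{\alpha_{ij}}\rangle$, because $\ord_{P_{ij}}(m)\geq\alpha_{ij}$ is equivalent to divisibility of $m/s$ by $t_{ij}^{\alpha_{ij}}$. The only cosmetic difference is that you dualize on the $W$-side from the start (computing $(M+W_\alpha)^\perp$) whereas the paper computes $(L\cap\F^\alpha)^\perp = M+(\F^\alpha)^\perp$; these are the same manipulation.
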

\begin{proof}
We compute
	\begin{align*}
\dim(L\cap \F^\alpha)
&=\dim V-\dim ( (L\cap \F^\alpha)^\perp)\\
&=\dim V-\dim (M+(\F^\alpha)^\perp)\\
&=\dim V-\dim ((\F^\alpha)^\perp)- \dim (M)
+\dim (M\cap(\F^\alpha)^\perp)\\
&=\dim (\F^\alpha)- \dim (M)
+\dim (M\cap(\F^\alpha)^\perp).
\end{align*}
On the other hand,
\[
	\mu(\alpha)=\dim M-\dim \left(\frac{1}{s}M\cap \langle t_{ij}^{\alpha_{ij}}\rangle\right).
\]
But 
\[
	\left(\frac{1}{s}M\cap \langle t_{ij}^{\alpha_{ij}}\rangle\right)=
\frac{1}{s}\left(M\cap(\F^\alpha)^\perp\right).\]
Indeed, the condition that a section $f\in W$ of $\mcL$ vanish to order $\alpha_{ij}$ at $P_{ij}$ is exactly the condition that the image of $f/s$ in $\KK[t_{ij}]$ is divisible by $t_{ij}^{\alpha_{ij}}$. 
The claim now follows.
\end{proof}

For $i=1,\ldots,m$ we have the gap functions
$\lambda_i=\lambda_{R_i}:\ZZ_{\geq 0}^{r_i}\to \ZZ_{\geq 0}$ of $R_i$ in $S_i$:
\[
	\lambda_i(\alpha)=\dim S_i/\left(R_i+\langle t_{i1}^{\alpha_1},\ldots,t_{ir_i}^{\alpha_{r_i}}\rangle\right).
\]
Likewise, we have the gap function 
\[
	\lambda'=\lambda_{R'}:\ZZ_{\geq 0}^{r_1}\times\ldots\times\ZZ_{\geq 0}^{r_m}\to \ZZ_{\geq 0}
\]
of $R'=(1/s)M$ in $S$:
\[
	\lambda'(\alpha)=\dim \left(\prod_{i=1}^m S_i\right)/\left(\frac{1}{s}M+\langle t_{ij}^{\alpha_{r_{ij}}}\rangle\right).
\]
Let $R$ be the subring of $S$ generated by $R'$; then we also have the gap function 
\[\lambda_R:\ZZ_{\geq 0}^{r_1}\times\ldots\times\ZZ_{\geq 0}^{r_m}\to\ZZ
\]
	 of $R$ in $S$.
We remark that the functions $\lambda_i$ are standard gap functions.

These gap functions connect to $\mu$ by the following:
\begin{lemma}\label{lemma:lambda}
	For all $\alpha=(\alpha^1,\ldots,\alpha^m)\in \ZZ_{\geq 0}^{r_1}\times\ldots\times\ZZ_{\geq 0}^{r_m}$,
\[
	\sum_{i=1}^m\lambda_i(\alpha^i)\leq\lambda_R(\alpha)\leq \lambda'(\alpha)= |\alpha|-\mu(\alpha).
\]
\end{lemma}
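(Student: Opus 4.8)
The plan is to establish the chain from right to left, since the two rightmost relations are quick linear algebra and the leftmost inequality carries the real content. Throughout I write $I=\langle t_{ij}^{\alpha_{ij}}\rangle$ for the monomial ideal appearing in all the gap functions, and I record two elementary facts about it. First, since each generator $t_{ij}^{\alpha_{ij}}$ lives in a single factor of $S=\prod_{i,j}\KK[[t_{ij}]]$, we have $S/I\cong\prod_{i,j}\KK[[t_{ij}]]/(t_{ij}^{\alpha_{ij}})$, so that $\dim S/I=\sum_{i,j}\alpha_{ij}=|\alpha|$. Second, $I$ decomposes as a product $I=\prod_{i=1}^m I_i$, where $I_i\subset S_i$ is the ideal generated by $t_{i1}^{\alpha^i_1},\ldots,t_{ir_i}^{\alpha^i_{r_i}}$.

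For the equality $\lambda'(\alpha)=|\alpha|-\mu(\alpha)$, I would consider the quotient map $\pi\colon S\to S/I$ and restrict it to $R'=(1/s)M$. Its kernel is $R'\cap I$ and its image is $(R'+I)/I$, so $\dim (R'+I)/I=\dim R'-\dim(R'\cap I)=\mu(\alpha)$, the last equality being exactly the definition of $\mu$. Since $\lambda'(\alpha)=\dim S/(R'+I)=\dim(S/I)-\dim(R'+I)/I$, combining with $\dim S/I=|\alpha|$ yields the claim. The middle inequality $\lambda_R(\alpha)\leq\lambda'(\alpha)$ is then immediate: as $R$ is by definition the subring of $S$ generated by $R'$, we have $R'\subseteq R$, hence $R'+I\subseteq R+I$, so $S/(R+I)$ is a quotient of $S/(R'+I)$ and comparing dimensions gives the inequality.

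The heart of the matter is the left inequality $\sum_i\lambda_i(\alpha^i)\leq\lambda_R(\alpha)$, and the crux is the inclusion $R\subseteq\prod_{i=1}^m R_i$. I would argue this as follows. Each projection $\pi_i\colon S\to S_i$ is a ring homomorphism carrying $R'$ into $R_i$ --- this is precisely the statement that $R'=(1/s)M$ sits inside $\CO_{C,Q_i}\subseteq R_i$ for every $i$. Since $R$ is generated as a ring by $R'$, its image $\pi_i(R)$ is the subring of $S_i$ generated by $\pi_i(R')\subseteq R_i$, and therefore $\pi_i(R)\subseteq R_i$; as $R\subseteq\prod_i\pi_i(R)$, we conclude $R\subseteq\prod_i R_i$. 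Combining this with the decomposition $I=\prod_i I_i$ gives $R+I\subseteq\prod_i(R_i+I_i)$, so $S/\prod_i(R_i+I_i)$ is a quotient of $S/(R+I)$. Finally $S/\prod_i(R_i+I_i)\cong\prod_i S_i/(R_i+I_i)$ has dimension $\sum_i\lambda_i(\alpha^i)$, and comparing with $\lambda_R(\alpha)=\dim S/(R+I)$ completes the proof.

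I expect the only genuinely nontrivial step to be the verification that $R\subseteq\prod_i R_i$; once this is in place, all three relations are bookkeeping about codimensions of nested subspaces of $S$, using only that a surjection between finite-dimensional quotients cannot increase dimension. It is worth being careful that the monomial ideal genuinely splits as $\prod_i I_i$ and that $S/I$ has the claimed dimension $|\alpha|$, as these underpin both the equality and the final dimension count.
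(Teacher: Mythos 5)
Your proposal is correct and follows essentially the same route as the paper: the equality and the middle inequality are the same codimension bookkeeping, and your key inclusion $R\subseteq\prod_i R_i$ (via $\pi_i(R')\subseteq R_i$ and the fact that $R_i$ is a ring) is exactly the paper's observation that the projection of the image of $R$ in $A=S/I$ to each factor $A_i=S_i/I_i$ lands in the image of $R_i$. The only difference is presentational — you spell out the decomposition $I=\prod_i I_i$ and the surjection $S/(R+I)\twoheadrightarrow\prod_i S_i/(R_i+I_i)$ slightly more explicitly than the paper does.
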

\begin{proof}
	We begin with the leftmost inequality. Let $\alpha^i=(\alpha_{i1},\ldots,\alpha_{ir_i})$.
Then
\begin{align*}
	\sum_{i=1}^m\lambda_i(\alpha^i)
	=\dim \prod_{i=1}^m\left( S_i/\left(R_i+\langle t_{i1}^{\alpha_{i1}},\ldots,t_{ir_i}^{\alpha_{ir_i}}\rangle\right)\right)
\\	\leq \dim \left ( \prod_{i=1}^m S_i \right) /\left(R+\langle t_{ij}^{\alpha_{ij}}\rangle\right)=\lambda_R(\alpha).
	\end{align*}
Indeed, this follows from that fact that the (finite dimensional) vector space
\[
	A:=\prod_{i,j} \KK[t_{ij}]/\langle t_{ij}^{\alpha_{ij}}\rangle
\]
decomposes as the direct sum of
\[
	A_i:=\prod_{j} \KK[t_{ij}]/\langle t_{ij}^{\alpha_{ij}}\rangle
\]
and the image of $R_i$ in $A_i$ contains the projection of the image of $R$ in $A$ to each $A_i$.

The middle inequality is immediate from $R'\subseteq R$.

For the equality on  the right side, we have
\begin{align*}
\lambda'(\alpha)&=\left (\dim \prod_{i=1}^m S_i \right) /\left(\frac{1}{s} M+\langle t_{ij}^{\alpha_{ij}}\rangle\right)\\
&=\dim\left( \prod_{i=1}^m S_i \right) /\langle t_{ij}^{\alpha_{ij}}\rangle
	-
	\dim \left(\frac{1}{s}M\right)/\left(\frac{1}{s}M\cap \langle t_{ij}^{\alpha_{ij}}\rangle\right)\\
	&=|\alpha|-\mu(\alpha).
\end{align*}
\end{proof}

\begin{cor}\label{cor:l1}
	Consider $\alpha\in\ZZ_{\geq 0}^{r_1}\times\ldots\times\ZZ_{\geq 0}^{r_m}$. If $|\alpha|\leq d+1$, we have
\begin{equation*}
	\sum_{i=1}^m \lambda_i(\alpha^i)\leq \lambda_R(\alpha)\leq \lambda'(\alpha)\leq \dim(L\cap \F^\alpha)+\rho_g.
\end{equation*}
Furthermore, if $|\alpha|\leq d+1-2\rho_g$, we have
\begin{equation*}
	\sum_{i=1}^m \lambda_i(\alpha^i)\leq \lambda_R(\alpha)\leq \lambda'(\alpha)=\dim(L\cap \F^\alpha).
\end{equation*}
\end{cor}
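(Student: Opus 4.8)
The plan is to directly combine Lemma~\ref{lemma:lambda} with the dimension bounds on $\F^\alpha$ established in Lemma~\ref{lemma:bound}. Lemma~\ref{lemma:lambda} already supplies the chain of inequalities
\[
\sum_{i=1}^m \lambda_i(\alpha^i)\leq \lambda_R(\alpha)\leq \lambda'(\alpha)=|\alpha|-\mu(\alpha),
\]
so the entire task reduces to controlling the quantity $|\alpha|-\mu(\alpha)$ in terms of $\dim(L\cap\F^\alpha)$ and $\rho_g$. Here I would immediately invoke Lemma~\ref{lemma:mu}, which tells us that $\mu(\alpha)=\dim\F^\alpha-\dim(L\cap\F^\alpha)$, so that
\[
\lambda'(\alpha)=|\alpha|-\mu(\alpha)=|\alpha|-\dim\F^\alpha+\dim(L\cap\F^\alpha).
\]
Thus everything hinges on bounding the \emph{defect} $|\alpha|-\dim\F^\alpha$.

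First I would handle the general case $|\alpha|\leq d+1$. By the lower bound in Lemma~\ref{lemma:bound} we have $\dim\F^\alpha\geq|\alpha|-\rho_g$, which rearranges to $|\alpha|-\dim\F^\alpha\leq\rho_g$. Substituting into the expression for $\lambda'(\alpha)$ above gives
\[
\lambda'(\alpha)=|\alpha|-\dim\F^\alpha+\dim(L\cap\F^\alpha)\leq\rho_g+\dim(L\cap\F^\alpha),
\]
which is precisely the first displayed inequality of the corollary once prepended with the two inequalities from Lemma~\ref{lemma:lambda}.

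For the sharper statement, I would assume the stronger hypothesis $|\alpha|\leq d+1-2\rho_g$. Under this assumption the second part of Lemma~\ref{lemma:bound} upgrades the lower bound to an \emph{equality} $\dim\F^\alpha=|\alpha|$, so the defect $|\alpha|-\dim\F^\alpha$ vanishes identically. Feeding this into the identity for $\lambda'(\alpha)$ collapses it to $\lambda'(\alpha)=\dim(L\cap\F^\alpha)$, giving the second displayed chain with equality on the right end.

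I do not anticipate a genuine obstacle here: the corollary is essentially a bookkeeping combination of three previously established lemmas, and the only subtlety is confirming that the hypothesis $|\alpha|\leq d+1$ (respectively $|\alpha|\leq d+1-2\rho_g$) matches exactly the range in which Lemma~\ref{lemma:bound} applies. The mild care needed is to verify that we may legitimately apply Lemma~\ref{lemma:bound}, whose statement is phrased for $\alpha\in\ZZ_{\geq 0}^r$ with $r=r_1+\dots+r_m$ after identifying the product $\ZZ_{\geq 0}^{r_1}\times\dots\times\ZZ_{\geq 0}^{r_m}$ with $\ZZ_{\geq 0}^r$; this identification is compatible with the definition of $\F^\alpha$ and of $|\alpha|$, so no issue arises.
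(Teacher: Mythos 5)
Your proposal is correct and follows exactly the paper's own argument: combine Lemma~\ref{lemma:lambda} and Lemma~\ref{lemma:mu} to write $\lambda'(\alpha)=\dim(L\cap\F^\alpha)+(|\alpha|-\dim\F^\alpha)$, then bound the defect $|\alpha|-\dim\F^\alpha$ using the two parts of Lemma~\ref{lemma:bound}. No discrepancies to report.
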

\begin{proof}
By Lemmas~\ref{lemma:mu} and~\ref{lemma:lambda}, we obtain
\[
\lambda'(\alpha)= \dim(L\cap \F^\alpha)+(|\alpha|-\dim \F^{\alpha}).
\]
The claims now follow from Lemma~\ref{lemma:bound}.
\end{proof}

We now relate our functions $\lambda_i$ back to the singularity degrees $\delta(Q_i)$:
\begin{lemma}\label{lemma:delta}
	For any $\alpha\in \ZZ_{\geq 0}^{r_i}$,
$\lambda_i(\alpha)\leq \delta(Q_i)$.
On the other hand, there exists an $\alpha\in \ZZ_{\geq 0}^{r_i}$ for which $\lambda_i(\alpha)=\delta(Q_i)$. 
\end{lemma}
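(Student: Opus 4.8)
The plan is to treat the two assertions separately, obtaining the inequality by a direct dimension count and the equality by invoking Lemma~\ref{lemma:d2}. For the inequality $\lambda_i(\alpha)\leq\delta(Q_i)$, I would observe that the ideal $J=\langle t_{i1}^{\alpha_1},\ldots,t_{ir_i}^{\alpha_{r_i}}\rangle$ only enlarges the subspace we quotient by: since $R_i+J\supseteq R_i$, the identity on $S_i$ descends to a surjection
\[
S_i/R_i\twoheadrightarrow S_i/(R_i+J).
\]
Hence $\lambda_i(\alpha)=\dim S_i/(R_i+J)\leq\dim S_i/R_i=\delta(Q_i)$, with no further input required.

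For the existence of an $\alpha$ realizing equality, the strategy is to verify that $R_i\subset S_i$ satisfies the hypotheses of Lemma~\ref{lemma:d2} and simply read off its conclusion. Three things must be checked. First, $\dim S_i/R_i=\delta(Q_i)$ is finite, which holds because $C$ is a curve and $S_i$ is the (finite) normalization of $R_i$. Second, $R_i$ is a complete subalgebra of $S_i$: since $R_i=\widehat{\CO}_{C,Q_i}$ is a complete local ring and $S_i$ is a finite $R_i$-module, the $\mfm_{S_i}$-adic and $\mfm_{R_i}$-adic topologies on $S_i$ are cofinal, so the topology $R_i$ inherits from $S_i$ is its own complete $\mfm_{R_i}$-adic topology. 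Third, $\lambda_i$ is a standard gap function. Granting these, Lemma~\ref{lemma:d2} produces $\alpha\in\NN^{r_i}$ with $\lambda_i(\alpha)=\delta_{S_i/R_i}=\delta(Q_i)$, which is exactly the second claim.

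The only point requiring genuine verification is the standardness of $\lambda_i$, which I expect to be the main (and only mildly subtle) step. The condition $\lambda_i(e_j)=0$ is immediate, because $1\in R_i$ is a unit of $S_i$, forcing $R_i+\langle t_{ij},\,t_{ik}^0\ (k\neq j)\rangle=S_i$. For $\lambda_i(e_1+\ldots+e_{r_i})=r_i-1$, I would identify $S_i/\mfm_{S_i}\cong\KK^{r_i}$ and note that, since $R_i$ is local with residue field $\KK$ and every $f\in R_i$ takes the same value at $Q_i$ along each of the $r_i$ branches, the image of $R_i$ in $\KK^{r_i}$ is the diagonal copy of $\KK$. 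Thus $S_i/(R_i+\mfm_{S_i})\cong\KK^{r_i}/\KK$ has dimension $r_i-1$, and $\lambda_i$ is standard. With this branchwise computation in hand, everything else is bookkeeping or a direct appeal to the results of \S\ref{sec:convergence}.
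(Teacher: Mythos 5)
Your proof is correct and follows essentially the same route as the paper: the inequality is the immediate surjection $S_i/R_i\twoheadrightarrow S_i/(R_i+J)$, and the equality is obtained by applying Lemma~\ref{lemma:d2}. The paper simply cites that lemma without spelling out the hypotheses, whereas you verify completeness, finiteness of $\dim S_i/R_i$, and standardness of $\lambda_i$ (the last of which the paper asserts separately in \S\ref{sec:rsing}); your verifications are all sound.
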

\begin{proof}
	The first claim is immediate. The second claim follows from
	Lemma~\ref{lemma:d2} in \S\ref{sec:convergence}.
\end{proof}

\section{Bounding the Genus}\label{sec:bound}
\begin{thm}\label{thm:b1}
Let $X$ be a smooth projective curve of geometric genus $\rho_g$, $\mcL$ a degree $d$ line bundle, and $\phi:X\to C$ a birational morphism induced by a basepoint free linear system $M \subset H^0(X,\mcL)$. Let $L=M^\perp\subset H^0(X,\mcL)^*$.
Assume that $2\dim L< d-2\rho_g$. 
Then \[\rho_a-\rho_g\leq \dim L,\]
where $\rho_a$ is the arithmetic genus of $C$.
\end{thm}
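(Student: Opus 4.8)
The plan is to apply the gap-function machinery of \S\ref{sec:rsing} to the finite set of singular points of $C$, and then to bootstrap a bound that is a priori valid only for small $|\alpha|$ into a global bound using the Key Lemma. First I would set $\ell=\dim L$ and take $Q_1,\ldots,Q_m$ to be \emph{all} the (finitely many) singular points of $C$, together with the associated data $R_i\subset S_i$, the subspace $R'=(1/s)M\subset S=\prod_i S_i$, and the subalgebra $R\subset S$ generated by $R'$, exactly as in \S\ref{sec:rsing}. Since $\rho_a-\rho_g=\sum_{Q\in C}\delta(Q)=\sum_{i=1}^m\delta(Q_i)$, it suffices to prove $\sum_{i=1}^m\delta(Q_i)\leq \ell$.

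The heart of the argument is to show that the gap function $\lambda_R$ of the subalgebra $R$ satisfies $\lambda_R(\alpha)\leq \ell$ for \emph{all} $\alpha$. The hypothesis $2\dim L<d-2\rho_g$ translates, since all quantities are integers, into the inequality $2\ell+2\leq d+1-2\rho_g$. Hence for every $\alpha$ with $|\alpha|\leq 2\ell+2$ we also have $|\alpha|\leq d+1-2\rho_g$, so the second part of Corollary~\ref{cor:l1} applies and gives $\lambda_R(\alpha)\leq \dim(L\cap \F^\alpha)\leq \dim L=\ell$. Because $R$ is a subalgebra of $S$, I can now invoke the Key Lemma~\ref{lemma:key} with $\gamma=\ell$: the verification just made for all $\alpha$ with $|\alpha|\leq 2\gamma+2$ upgrades to $\lambda_R(\alpha)\leq \ell$ for every $\alpha\in\ZZ_{\geq 0}^{r_1}\times\cdots\times\ZZ_{\geq 0}^{r_m}$.

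To conclude, I would combine the leftmost inequality $\sum_{i}\lambda_i(\alpha^i)\leq \lambda_R(\alpha)$ of Lemma~\ref{lemma:lambda} with Lemma~\ref{lemma:delta}. For each $i$, Lemma~\ref{lemma:delta} produces some $\beta^i\in\ZZ_{\geq 0}^{r_i}$ with $\lambda_i(\beta^i)=\delta(Q_i)$; setting $\beta=(\beta^1,\ldots,\beta^m)$ then yields $\sum_{i=1}^m\delta(Q_i)=\sum_{i=1}^m\lambda_i(\beta^i)\leq \lambda_R(\beta)\leq \ell$, which is the required bound $\rho_a-\rho_g\leq \dim L$.

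The main subtlety, and the reason the argument closes, is that the factor of $2$ in the Key Lemma's hypothesis $|\alpha|\leq 2\gamma+2$ is precisely matched by the factor of $2$ in the numerical assumption $2\dim L<d-2\rho_g$. This is exactly what lets me feed the estimate of Corollary~\ref{cor:l1}, which is only available in the restricted range $|\alpha|\leq d+1-2\rho_g$, straight into the Key Lemma and thereby avoid an apparent circularity: a naive approach that simply chooses $\alpha^i$ attaining $\delta(Q_i)$ and bounds $|\alpha|\leq 2\sum_i\delta(Q_i)$ would need $\sum_i\delta(Q_i)\leq \ell$ known \emph{in advance} to stay inside the admissible range. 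The Key Lemma dissolves this obstacle by propagating the small-$|\alpha|$ bound on $\lambda_R$ to all $\alpha$, using only that $R$ is a subalgebra.
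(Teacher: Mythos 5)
Your proposal is correct and follows essentially the same route as the paper's own proof: bound $\lambda_R(\alpha)\leq\dim L$ for $|\alpha|\leq 2\dim L+2\leq d+1-2\rho_g$ via Corollary~\ref{cor:l1}, propagate to all $\alpha$ with the Key Lemma~\ref{lemma:key}, and conclude via Lemmas~\ref{lemma:lambda} and~\ref{lemma:delta}. Your explicit remark on how the factor of $2$ in the Key Lemma matches the numerical hypothesis, and the integer step $2\dim L<d-2\rho_g\Rightarrow 2\dim L+2\leq d+1-2\rho_g$, are exactly the points the paper's terser argument relies on.
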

\begin{proof}
	We consider the gap function $\lambda_R$ from \S\ref{sec:rsing}. We will apply Lemma~\ref{lemma:key} to this function using $\gamma=\dim L$. By Corollary~\ref{cor:l1}, $\lambda_R(\alpha)\leq \gamma$ for all $\alpha\in\ZZ_{\geq 0}^{r_1+\ldots+r_m}$ satisfying 
	\[
	|\alpha|\leq 2\gamma+2=2\dim L+2\leq d+1-2\rho_g.
\]
Thus, $\lambda_R(\alpha)\leq \dim L$ for all $\alpha$. But then 
\[
\rho_a-\rho_g=\sum_i\delta(\lambda_i)\leq \dim L,
\]
where the equality on the left is by Lemma~\ref{lemma:delta}, and the inequality on the right is by Corollary~\ref{cor:l1}.
\end{proof}

We may use this to obtain the following.

\begin{cor}\label{thm:1new}
	Fix $d\geq 1$ and $n\geq 3$.  Assume that $d<2n$.
	Then for any non-degenerate degree $d$ curve $C\subset \PP^n$ with arithmetic genus $\rho_a$, we have
 $\rho_a\leq d-n$. 
\end{cor}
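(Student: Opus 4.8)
The plan is to realize $C$ as a birational image of a smooth curve and invoke Theorem~\ref{thm:b1}. Let $\nu\colon X\to C$ be the normalization and $\phi\colon X\to C\subset\PP^n$ its composition with the inclusion; then $X$ is a smooth projective curve, of some geometric genus $\rho_g$, and $\phi$ is birational onto $C$. Put $\mcL=\phi^*\CO_{\PP^n}(1)$; since $\phi$ is birational onto a curve of degree $d$, we have $\deg\mcL=d$. Let $M\subseteq H^0(X,\mcL)$ be the image of $H^0(\PP^n,\CO_{\PP^n}(1))\to H^0(X,\mcL)$. As $C$ is non-degenerate, no nonzero linear form vanishes on $C$, so this map is injective and $\dim M=n+1$; and $M$ is basepoint free because $\phi$ is an everywhere-defined morphism. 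We are thus precisely in the situation of Theorem~\ref{thm:b1}, with $L=M^\perp\subseteq H^0(X,\mcL)^*$ satisfying $\dim L=h^0(\mcL)-(n+1)$.

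The key step is to show that $\mcL$ is nonspecial, i.e. $h^1(\mcL)=0$, and this is where the hypothesis $d<2n$ is used. Suppose instead that $\mcL$ is special. It is also effective, since $h^0(\mcL)\geq\dim M=n+1>0$, so Clifford's theorem applies and yields $h^0(\mcL)\leq d/2+1$. Combining this with $h^0(\mcL)\geq n+1$ gives $n+1\leq d/2+1$, i.e. $2n\leq d$, contradicting $d<2n$. Hence $h^1(\mcL)=0$, and Riemann--Roch gives $h^0(\mcL)=d+1-\rho_g$, so that $\dim L=d-n-\rho_g$.

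It remains to check the numerical hypothesis of Theorem~\ref{thm:b1}. The condition $2\dim L<d-2\rho_g$ reads $2(d-n-\rho_g)<d-2\rho_g$, which simplifies to exactly $d<2n$ and therefore holds. Theorem~\ref{thm:b1} now gives $\rho_a-\rho_g\leq\dim L=d-n-\rho_g$, that is $\rho_a\leq d-n$, as claimed.

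I expect the nonspeciality of $\mcL$ to be the only genuine obstacle. Theorem~\ref{thm:b1} by itself only delivers $\rho_a-\rho_g\leq\dim L$, and without knowing $h^1(\mcL)=0$ one would obtain the weaker bound $\rho_a\leq d-n+h^1(\mcL)$; the passage to the sharp statement rests on Clifford's theorem. It is pleasant that the single inequality $d<2n$ simultaneously forces $h^1(\mcL)=0$ and verifies the numerical hypothesis of Theorem~\ref{thm:b1}.
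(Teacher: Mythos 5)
Your proposal is correct and follows essentially the same route as the paper's own proof: normalize $C$, rule out speciality of $\mcL$ via Clifford's theorem using $d<2n$, compute $\dim L = d-n-\rho_g$ by Riemann--Roch, verify $2\dim L < d-2\rho_g$, and apply Theorem~\ref{thm:b1}. The only cosmetic difference is how the Clifford contradiction is phrased (you derive $2n\leq d$ directly from $n+1\leq h^0(\mcL)\leq d/2+1$, while the paper writes $n+1+\dim L\leq d/2+1<n+1$), which is the same argument.
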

\begin{proof}
Let $\phi:X\to C$ be the normalization of $C$. The morphism $\phi$ is given by a linear system $M\subset H^0(X,\mcL)$ of dimension $n+1$, where $\mcL=\phi^*(\CO_{\PP^n}(1))$.
We let $L=M^\perp \subset H^0(\mcL)^*$ as in Theorem \ref{thm:b1}.

Suppose $h^1(\mcL)> 0$. Then $\mcL$ is special \cite[Example IV.1.3.4]{hartshorne} and Clifford's Theorem \cite[Theorem IV.5.4]{hartshorne} yields $h^0(\mcL)\leq d/2 + 1$, where $h^0(\mcL) = (n+1)+\dim L$. 
Thus, one obtains
\[
n+1+\dim L \leq d/2 + 1< n+1,
\]
which is a contradiction. Therefore $h^1(\mcL) = 0$.

Thus, by Riemann-Roch $n+1+\dim L = d+1-\rho_g$.
This implies
\[
2\dim L = 2d -2\rho_g - 2n = d - 2\rho_g + (d-2n),
\]
and hence $2\dim L < d - 2\rho_g$. Theorem \ref{thm:b1} gives $\rho_a - \rho_g \leq \dim L = d - n-\rho_g$ and the statement follows. 
\end{proof}

\begin{ex}\label{ex:1}
	The upper bound on the arithmetic genus from Corollary~\ref{thm:1new} is sharp. Indeed, for $d\leq 2n-1$, consider the rational curve
	\[
		C=\proj\KK[y^d,x^{d-n+1}y^{n-1},x^{d-n+2}y^{n-2},x^{d-n+3}y^{n-3},\ldots,x^d]\subset\PP^n.
	\]
This has degree $d$ and arithmetic genus $d-n$.
Indeed, this curve is smooth except for at the point $(1:0:\cdots:0)$, where the valuation semigroup contains all of $\NN$ except for $1,2,\ldots,d-n$.
\end{ex}
\begin{ex}\label{ex:2}
The hypothesis $d<2n$ is necessary in order to obtain that the arithmetic genus is at most $d-n$. Indeed, for $d=2n$, consider the rational curve 
	\[
		C=\proj\KK[y^d,x^{n+1}y^{n-1},x^{n+2}y^{n-2},x^{n+3}y^{n-3},\ldots,x^d]\subset\PP^n.
	\]
This curve is a projection of $X_d$ away from a projective linear space of dimension $n-1$, it has degree $d$ and arithmetic genus $n+1=d-n+1$.
Indeed, this curve is smooth except for at the point $(1:0:\cdots:0)$, where the valuation semigroup contains all of $\NN$ except for $1,2,\ldots,n$ and $2n+1$.
\end{ex}

\begin{rem}[{\bf Castelnuovo-Mumford regularity}]
Let $C$ be a non-degenerate degree $d$ curve in $\PP^n$ of arithmetic genus $\rho_a$ and geometric genus $\rho_g$. 
Then \cite[Theorem 1]{Noma} implies that
for any natural number $\ell$ satisfying $\ell<\rho_a$ and $n\geq \ell+2$, $C$ is $(d-n-\ell+2)$-regular.

If we assume that $d<2n$, we may apply Theorem~\ref{thm:1} to obtain that $\ell=\rho_a-1$ satisfies the hypothesis of loc.~cit. We conclude that
$C$ is $\left(d-n-\rho_a+3\right)$-regular.
By well-known properties of Castelnuovo-Mumford
regularity, this implies that $h^0(\mathcal O_C(t)) = h_C(t)$ for all $t\geq (d-n)-\rho_a+2$, where $h_C(t)$ is the Hilbert polynomial of $C$.

In particular, for curves with maximal arithmetic genus ($\rho_a=d-n$) we obtain that $C$ is $3$-regular.
\end{rem}

\section{Classifying Gap Functions}\label{sec:sing} 
\subsection{Classifying Standard Gap Functions for \texorpdfstring{$\delta\leq 3$}{delta at most 3}}\label{sec:classify}
In this section, we will classify all standard gap functions $\lambda_R$ for subalgebras $R\subset S$ with degree $\delta(\lambda)\leq 3$.
As described at the end of \S\ref{sec:prelim}, it suffices to describe the restriction of such $\lambda_R$ to $\NN^r$, which is what we will do in the following.

\begin{rem}
Any time we bound $\delta(\lambda)$, there are only finitely many possible standard gap functions $\lambda$ coming from subalgebras of $S$. Moreover, $\lambda$ is determined by its values on those $\alpha\in \NN^r$ satisfying $|\alpha|\leq 2\delta$. 

To see this, consider the natural partial order on $\NN^r$. For any $\alpha\in \NN^r$ satisfying $|\alpha|> 2\delta$, $\lambda(\alpha)=\max_{\alpha'}\lambda(\alpha')$, where the maximum is taken over all $\alpha'$ smaller than $\alpha$. Indeed, set 
$\gamma=\max_{\alpha'}\lambda(\alpha')$; certainly $\lambda(\alpha)\geq \gamma$, so if $\gamma=\delta$ we are done.
If $\gamma<\delta$, then $|\alpha|>2\gamma+2$, and we may apply Lemma~\ref{lemma:help} to conclude that $\lambda(\alpha)\leq \gamma$ as desired.
\end{rem}

To describe any such gap function $\lambda$ with $\delta\leq 3$, it suffices to identify those positions where $\lambda(\alpha)$ is larger than $\lambda(\alpha-e_i)$ for all $i$ with $\alpha-e_i\in \NN^r$. We highlight those positions in blue in the Tables~\ref{table:d1}, \ref{table:d2}, and~\ref{table:d3}. Note that the tables are only listing the values of $\lambda$ on $\NN^r$.

\begin{table}
	\[	\begin{array}{l@{\qquad} l}
		\textbf{1.1}:\quad		\begin{array} {|c c c c c}
			\hl{0}& \hl{1} & 1 & \ldots 
		\end{array}
	\\
	\\
	\\
		\textbf{1.2}:\quad	\begin{array}{|c c c  }
		\vdots&\vdots\\
		1&1&\ldots\\
		\hl{1}&1&\ldots\\
		\hline
	\end{array}&
\end{array}
\]
	\caption{Functions $\lambda$ with  $\delta=1$}\label{table:d1}
\end{table}

\begin{table}
	\[	\begin{array}{l@{\qquad} l}
		\textbf{2.1.a}:\quad		\begin{array} {|c c c c c}
			\hl{0}& \hl{1} & \hl{2} & \ldots 
		\end{array}
		&
\textbf{2.1.b}:\quad		\begin{array} {|c c c c c c c}
			\hl{0}& \hl{1} & 1&\hl{2} &  \ldots 
		\end{array}
	\\
	\\
	\\
	\textbf{2.2.a}:\quad	\begin{array}{|c c c }
		\vdots&\vdots\\
		1&\hl{2}&\ldots\\
		\hl{1}&1&\ldots\\
		\hline
	\end{array}&
	\textbf{2.2.b}:\quad	\begin{array}{|c c c }
		\vdots&\vdots\\
		 1&2&\ldots\\
		\hl{1}&\hl{2}&\ldots\\
		\hline
	\end{array}
\end{array}
\]
	\begin{align*}	
&\textbf{2.3}:\quad \lambda(1,1,1)=\hl 2
\end{align*}
	\caption{Functions $\lambda$ with  $\delta=2$}\label{table:d2}
\end{table}

\begin{table}
	\[	\begin{array}{l@{\qquad} l}
		\textbf{3.1.a}:\quad		\begin{array} {|c c c c c}
			\hl{0}& \hl{1} & \hl{2} & \hl{3}  & \ldots 
		\end{array}
		&
		\textbf{3.1.b}:\quad		\begin{array} {|c c c c c c}
			\hl{0}& \hl{1} & \hl{2} & 2&\hl{3}  & \ldots 
		\end{array}\\
		\\
		\\
		\textbf{3.1.c}:\quad		\begin{array} {|c c c c c c c}
			\hl{0}& \hl{1} & \hl{2} & 2&2 & \hl{3} & \ldots 
		\end{array}
&
\textbf{3.1.d}:\quad		\begin{array} {|c c c c c c c}
			\hl{0}& \hl{1} & 1&\hl{2} & 2& \hl{3} & \ldots 
		\end{array}
	\\
	\\
	\\
\textbf{3.2.a}:\quad	\begin{array}{|c c c c c}
		\vdots&\vdots&\vdots\\
		1&2&3&\ldots\\
		1& 2&3&\ldots\\
		\hl{1}&\hl{2}&\hl{3}&\ldots\\
		\hline
	\end{array}&
\textbf{3.2.b}:\quad	\begin{array}{|c c c c c}
		\vdots&\vdots&\vdots\\
		1&2&2&3&\ldots\\
		1& 2&2&3&\ldots\\
		\hl{1}&\hl{2}&2&\hl{3}&\ldots\\
		\hline
	\end{array}\\ \\ \\
\textbf{3.2.c}:\quad	\begin{array}{|c c c c c}
		\vdots&\vdots&\vdots\\
		2&3&3&\ldots\\
		\hl{2}& \hl{3}&3&\ldots\\
		\hl{1}&\hl{2}&2&\ldots\\
		\hline
	\end{array} &
\textbf{3.2.d}:\quad	\begin{array}{|c c c c c}
		\vdots&\vdots&\vdots\\
		1&2&3&\ldots\\
		1& 2&\hl{3}&\ldots\\
		\hl{1}&\hl{2}&2&\ldots\\
		\hline
	\end{array}
\\ \\ \\
\textbf{3.2.e}:\quad	\begin{array}{|c c c c c}
		\vdots&\vdots&\vdots\\
		1&2&2&3&\ldots\\
		1& 2&2&\hl{3}&\ldots\\
		\hl{1}&\hl{2}&2&2&\ldots\\
		\hline
	\end{array}
	&\textbf{3.2.f}:\quad	\begin{array}{|c c c c}
		\vdots&\vdots&\vdots\\
		1&2&\hl{3}&\ldots\\
		1&\hl{2}&2&\ldots\\
		\hl{1}&1&1&\ldots\\
		\hline
	\end{array}
	\\
	\\
\end{array}
\]
	\begin{align*}	
	&\textbf{3.3.a}:\qquad &\lambda(1,1,1)=\hl{2}, \quad\lambda(1,1,2)=\hl{3}\\
		&\textbf{3.3.b}:\qquad &\lambda(1,1,1)=\hl{2}, \quad\lambda(1,2,2)=\hl{3}\\
		&\textbf{3.3.c}:\qquad &\lambda(1,1,1)=\hl{2},\quad\lambda(2,2,2)=\hl{3}\\
&\textbf{3.4}:\quad &\lambda(1,1,1,1)=\hl 3
\end{align*}
	\caption{Functions $\lambda$ with  $\delta=3$}\label{table:d3}
\end{table}

Before we proceed with classifying the possible functions $\lambda$ with $\delta\leq 3$, we commence with a useful general observation:

\begin{rem}[{\bf Upward propagation}]\label{upward}
By Remark~\ref{rem:sigma}, if $\lambda(i,j)<\lambda(i+1,j)$, then $\lambda(i,k)<\lambda(i+1,k)$ for all $k>j$. Indeed, were $\lambda(i,k)=\lambda(i+1,k)$, then $\lambda(i,k)[1]\in \Sigma$, which in turn would have implied that $\lambda(i,j)=\lambda(i+1,j)$. We call such a behavior \emph{upward propagation}.
\end{rem}

\begin{prop}[{See also \cite[\S2.1]{ethan1}}]\label{prop:classifylambda}
The possible standard gap functions $\lambda$ for subalgebras $R$ of $S$ with $\delta(\lambda)\leq 3$ are, up to permutation of the $r$ coordinates, those listed in 
Tables~\ref{table:d1}, \ref{table:d2}, and~\ref{table:d3} .
\end{prop}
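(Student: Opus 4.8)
The strategy is to classify standard gap functions $\lambda=\lambda_R$ with $\delta(\lambda)\le 3$ by a bounded, essentially finite, case analysis on $\NN^r$. By the remark opening this section, such a $\lambda$ is completely determined by its values on the finite set $\{\alpha\in\NN^r : |\alpha|\le 2\delta\le 6\}$, and by Lemma~\ref{lemma:d2} the degree $\delta(\lambda)=\delta_{S/R}$ equals the maximum value attained, with some $\alpha$ of small norm realizing it. So the whole classification reduces to enumerating the admissible value-patterns on a bounded box and matching each to an entry of Tables~\ref{table:d1}, \ref{table:d2}, and~\ref{table:d3}. First I would record the basic constraints every $\lambda$ must satisfy: the step inequalities $\lambda(\alpha)\le\lambda(\alpha+e_i)\le\lambda(\alpha)+1$ from Remark~\ref{rem:sigma}; standardness, i.e.~$\lambda(e_i)=0$ and $\lambda(e_1+\dots+e_r)=r-1$; and the semigroup constraint that $\Sigma=\val(R)$ is closed under addition, since $R$ is a \emph{subalgebra}. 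The key structural tool is Remark~\ref{upward} (upward propagation): once $\lambda$ jumps in the $i$-th direction at some point, it keeps jumping in that direction at every larger point, so the ``blue'' positions where $\lambda$ strictly exceeds all its predecessors are highly constrained.

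\textbf{Reduction on the number of branches.} A first structural step is to bound the effective number of coordinates $r$. Since $\lambda(e_1+\dots+e_r)=r-1\le\delta\le 3$, we get $r\le 4$, and $r=4$ forces $\delta=3$ with $\lambda(1,1,1,1)=3$ (this is case \textbf{3.4}, the ordinary quadruple point). I would then treat $r=1,2,3,4$ in turn. For $r=1$, $\lambda$ is a non-decreasing step function on $\NN$ starting at $0$, jumping by at most $1$, with total rise $\delta$; the positions of the jumps encode exactly the gaps of the numerical semigroup $\Sigma\subseteq\NN$, and for $\delta\le 3$ these are the finitely many numerical semigroups of genus $\le 3$ containing the relevant generators — giving the unibranched entries \textbf{1.1}, \textbf{2.1.a}, \textbf{2.1.b}, \textbf{3.1.a}--\textbf{3.1.d}. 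The semigroup-closure condition is what rules out ``gap patterns'' that satisfy the step inequalities but cannot arise from a subalgebra (e.g.~a single gap at $1$ with a value at $2$ already present forces, by adding $2+2$, $3+2$, etc., the whole tail to belong to $\Sigma$).

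\textbf{The cases $r=2,3$.} These are the substance of the argument. For $r=2$ I would organize the analysis by $\delta\in\{1,2,3\}$ and, within each, by the value $\lambda(1,1)\in\{0,\dots,r-1\}=\{0,1\}$ forced by standardness to be $1$. Upward propagation reduces the description to: where does $\lambda$ first jump along each axis, and where do the ``diagonal'' jumps occur. Each time I identify a candidate pattern on the small box, I must verify it is \emph{realizable} by an actual subalgebra $R\subset\KK[[t_1]]\times\KK[[t_2]]$ and that the resulting $\Sigma$ is additively closed and consistent with the tabulated blue positions; conversely I must check no additional patterns survive the constraints. The same scheme, now with three axes and the extra diagonal directions, handles $r=3$ and produces \textbf{3.2.a}--\textbf{3.2.f} and \textbf{3.3.a}--\textbf{3.3.c}. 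I expect the main obstacle to be precisely this \textbf{completeness-plus-realizability bookkeeping}: showing that the combinatorial constraints (step, standardness, additive closure of $\Sigma$, upward propagation) are \emph{jointly} strong enough to cut the a priori large list of value-tables on the box down to exactly the tabulated ones, while exhibiting a witness subalgebra for each survivor. Lemma~\ref{lemma:help} is the tool that lets me stop at $|\alpha|\le 2\delta$ and be sure nothing new happens beyond the box, so the finiteness is genuine; the remaining work is the disciplined elimination of spurious patterns, where the additive structure of $\Sigma$ (not merely the inequalities on $\lambda$) does the decisive cutting.
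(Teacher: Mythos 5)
Your plan coincides with the paper's proof in both structure and tools: realizability is handled by exhibiting explicit witness subalgebras for each tabulated $\lambda$ (the paper's Tables~\ref{table:s1}--\ref{table:s33}, via Proposition~\ref{prop:sing}), and completeness is obtained by eliminating all other value-patterns on the bounded box using Remark~\ref{rem:sigma}, upward propagation (Remark~\ref{upward}), and Lemma~\ref{lemma:help} — exactly the ingredients you name, including the reduction $r\le 4$ and the semigroup-closure cutting. The only remaining work is to actually execute the elimination you defer to ``bookkeeping''; the paper does this explicitly only for the hardest case $\delta=3$, $r=2$, as a branching argument on the values $\lambda(3,1)$, $\lambda(4,1)$, $\lambda(2,1)$, $\lambda(1,2)$, $\lambda(3,2)$, $\lambda(4,2)$ (encoded in Figure~\ref{fig:proof}), and resolves the other cases by the same techniques.
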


\begin{small}
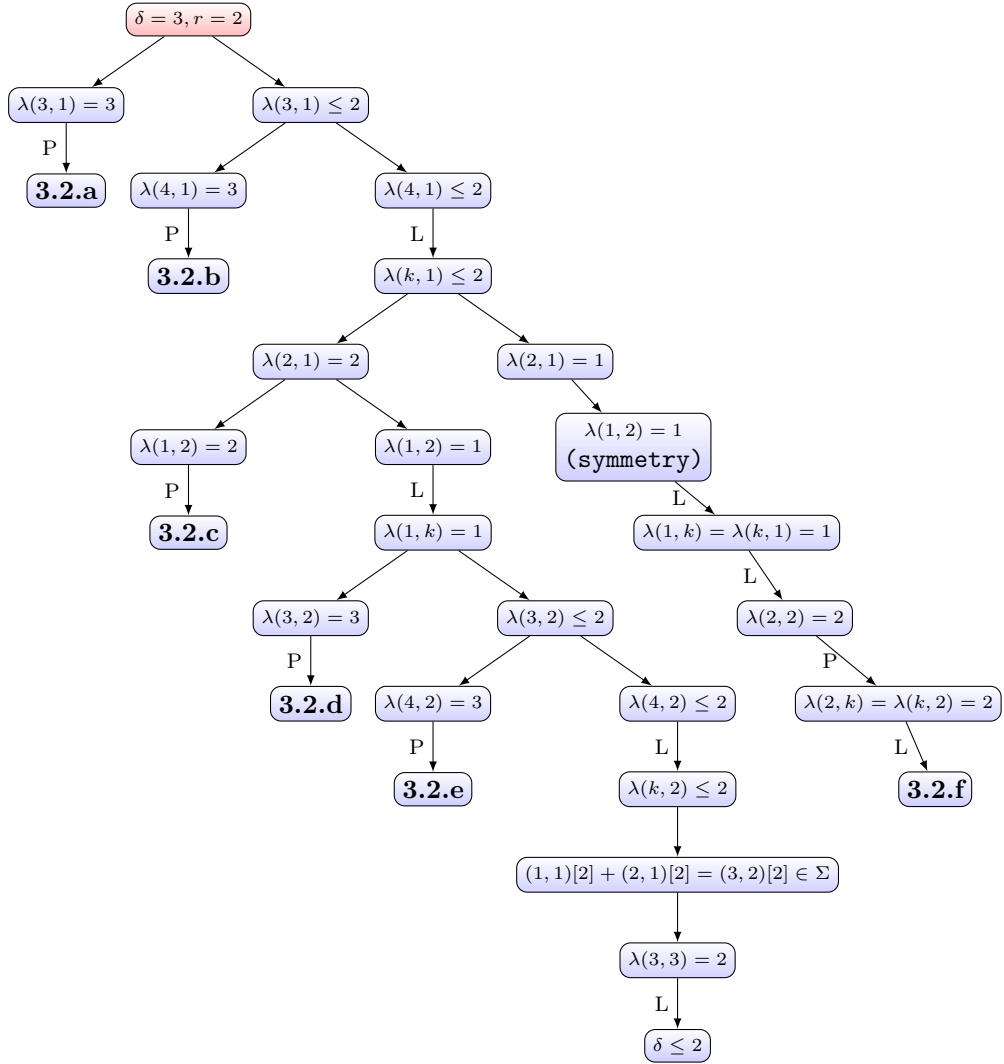
\begin{figure}
\begin{tikzpicture}
  [
    grow                    = down,
    sibling distance        = 10em,
    level distance          = 3.5em,
    edge from parent/.style = {draw, -latex},
    every node/.style       = {font=\footnotesize},
  ]
  \node [root] {\scriptsize{$\delta=3,r=2$}}
  child { node [env] {\scriptsize $\lambda(3,1)=3$}
  	child { node [env] {\bf 3.2.a}
	edge from parent node [left] {P}}}
	child { node [env] {\scriptsize $\lambda(3,1)\leq 2$}
	child { node [env] {\scriptsize $\lambda(4,1)=3$}
		child {node [env] {\bf 3.2.b}
		edge from parent node [left] {P}}}
	child { node [env] {\scriptsize $\lambda(4,1)\leq 2$}
	child { node [env] {\scriptsize $\lambda(k,1)\leq 2$}
		child { node [env] {\scriptsize $\lambda(2,1)=2$}
			child { node [env] {\scriptsize $\lambda(1,2)=2$}
			child { node [env] {\bf 3.2.c}
			edge from parent node [left] {P}
		}
		}
			child { node [env] {\scriptsize $\lambda(1,2)=1$}
		child { node [env] {\scriptsize $\lambda(1,k)=1$}
	child { node [env] {\scriptsize $\lambda(3,2)=3$}
			child { node [env] {\bf 3.2.d}
			edge from parent node [left] {P}}
}
	child { node [env] {\scriptsize $\lambda(3,2)\leq 2$}
	child { node [env] {\scriptsize $\lambda(4,2)=3$}
	child { node [env] {\bf 3.2.e}
			edge from parent node [left] {P}}
}
	child { node [env] {\scriptsize $\lambda(4,2)\leq 2$}
	child { node [env] {\scriptsize $\lambda(k,2)\leq 2$}
	child { node [env] {\scriptsize $(1,1)[2]+(2,1)[2]=(3,2)[2]\in\Sigma$}
	child { node [env] {\scriptsize $\lambda(3,3)=2$}
	child { node [env] {\scriptsize $\delta\leq 2$}
	edge from parent node [left] {L}}}
}
	edge from parent node [left] {L}}}
}
	edge from parent node [left] {L}
		}
	}}
	child { node [env] {\scriptsize $\lambda(2,1)=1$}
	child [right] { node [env] {\scriptsize $\lambda(1,2)=1$\\(symmetry)}
	child [right] { node [env] {\scriptsize $\lambda(1,k)=\lambda(k,1)=1$}
	child [right] { node [env] {\scriptsize $\lambda(2,2)=2$}
	child [right] { node [env] {\scriptsize $\lambda(2,k)=\lambda(k,2)=2$}
	child [right] { node [env] {\bf 3.2.f}
	edge from parent node [left] {L}
}
	edge from parent node [left] {P}
}
	edge from parent node [left] {L}
}
	edge from parent node [left] {L}
}}
	}
	edge from parent node [left] {L}
	}}
};
\end{tikzpicture}
\caption{The flow diagram representing the proof of Proposition~\ref{prop:classifylambda}}\label{fig:proof}
\end{figure}
\end{small}

\begin{proof}
	In Tables~\ref{table:s1}, \ref{table:s2}, \ref{table:s31}, \ref{table:s32}, \ref{table:s33}, we explicitly give subalgebras $R\hookrightarrow \prod \KK[t_i]$ which have the prescribed gap functions $\lambda$. See the proof of Proposition~\ref{prop:sing} for details on the tables.
Here we show that no other gap functions $\lambda$ are possible. 

	This is achieved via repeated use of Remark~\ref{rem:sigma} and Lemma~\ref{lemma:key}. We do this explicitly for the most involved case, which is when $\delta=3$ and $r=2$. The other cases are resolved using similar techniques.

	We will differentiate between a number of different cases. The proof is encoded in Figure~\ref{fig:proof}. The label ``L'' means we are applying Lemma~\ref{lemma:help}, whereas label ``P'' means we are using upward propagation discussed in Remark~\ref{upward}. 

We begin by assuming that $\lambda(3,1)=3$. Then we are in case {\bf 3.2.a}. Indeed, we are forced to have $\lambda(2,1)=2$ and $\lambda(1,1)=1$. Therefore, the structure of the rest of the values of $\lambda$ follows immediately by upward propagation.

Henceforth, we will assume that $\lambda(3,1)\leq 2$. Assume next that $\lambda(4,1)=3$. 
Then $\lambda(3,1)=\lambda(2,1)=2$. Indeed, $\lambda(3,1)\geq \lambda(4,1)-1$. Moreover, we cannot have $\lambda(2,1) = 1$, as
otherwise $(1,1)[1]\in \Sigma$, which would imply $\lambda(3,1) = \lambda(2,1)=1$. Upward propagation completely determines the rest of the values of $\lambda$; then we are in case {\bf 3.2.b}.

So we may assume $\lambda(4,1)\leq 2$. Then Lemma~\ref{lemma:help} applied inductively to $(k,1)$ for $k\geq 5$ implies then that $\lambda(k,1)\leq 2$ for all $k\geq 5$. 

Now assume that $\lambda(2,1)=2$. If $\lambda(1,2)=2$, then upward propagation implies we are in case {\bf 3.2.c}. 
By symmetry, we may thus assume that $\lambda(1,2)=1$ in all other cases. Lemma~\ref{lemma:key} implies that $\lambda(1,k)=1$ for all $k$. 

Maintaining the assumption that $\lambda(2,1)=2$, we next suppose $\lambda(3,2)=3$. Upward propagation implies that we are in case {\bf 3.2.d}. Hence, we may assume $\lambda(3,2)\leq 2$. If $\lambda(4,2)=3$, upward propagation implies we are in the case {\bf 3.2.e}. If instead, $\lambda(4,2)\leq 2$, then Lemma~\ref{lemma:help} applied repeatedly implies that $\lambda(k,2)\leq 2$ for all $k$. Furthermore, since $\lambda(1,1)=\lambda(1,2)$ and $\lambda(2,1)=\lambda(2,2)$, we see that $(1,1)[2],(2,1)[2]\in \Sigma$. Then $(3,2)[2]\in \Sigma$, and so $\lambda(3,3)=2$. Now applying Lemma~\ref{lemma:help} repeatedly would imply that $\lambda(i,j)\leq 2$ for all $(i,j)\in\NN^2$, which is impossible.

So this leaves us with the case that $\lambda(2,1)$ (and by symmetry $\lambda(1,2)$) are both equal to one. Repeated application of Lemma~\ref{lemma:help} gives us that $\lambda(i,j)=1$ when $i$ or $j$ is one. Again, Lemma~\ref{lemma:help} gives $\lambda(2,2)=2$. Now, upward propagation (and its horizontal variant) give that $\lambda(2,k)=\lambda(k,2)=2$, for all $k\geq 3$. If $\lambda(3,3)\neq 3$, then Lemma~\ref{lemma:help} would imply $\lambda(i,j)\leq 2$ for all $(i,j)\in \NN^2$, which is impossible. Finally, we deduce $\lambda(3,3)=3$, and we are in case {\bf 3.2.f}.
\end{proof}

\subsection{Singularities Corresponding to Standard Gap Functions}\label{ssec:sing}
Having classified all standard gap functions $\lambda$ for subalgebras of $S$ with $\delta(\lambda)\leq 3$ in \S\ref{sec:classify}, we now wish to see what each such function $\lambda$ corresponds to geometrically. See also \cite[\S 2.2]{ethan1} for a similar discussion.
\begin{prop}\label{prop:sing}
	Let $\lambda$ be one of the standard gap functions classified in Proposition~\ref{prop:classifylambda}, with $\lambda=\lambda_R$ for some subalgebra $R$ of $S$. Then the completion of the subalgebra $R$ is isomorphic to a quotient of a power series ring with generators and relations as listed in Tables~\ref{table:s1}, \ref{table:s2}, \ref{table:s31}, \ref{table:s32}, \ref{table:s33}. 
\end{prop}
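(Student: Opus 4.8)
The plan is to recover the isomorphism type of $\widehat{R}$ directly from $\lambda$ and then match it against the explicit presentations in the tables. Since Proposition~\ref{prop:classifylambda} leaves only finitely many functions $\lambda$ to consider, and each comes with a candidate subalgebra $R_{\mathrm{tab}}\hookrightarrow S$ whose gap function is (by the direct computation underlying the tables) exactly $\lambda$, it suffices to show that \emph{any} complete subalgebra $R$ with $\lambda_R=\lambda$ becomes equal to $R_{\mathrm{tab}}$ after applying a suitable automorphism $\psi$ of $S$. The engine driving this is the observation that $\lambda$ completely determines the value set $\Sigma=\val(R)$: by Remark~\ref{rem:sigma}, $\alpha[i]\in\Sigma$ precisely when $\lambda(\alpha)=\lambda(\alpha+e_i)$, so every slot recorded in the tables can be read off as a membership statement about $\Sigma$.

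First I would produce enough elements of $\widehat{R}$ with controlled leading terms. By Lemma~\ref{lemma:d2} we already have $t_i^k\in\widehat R$ for all $k\ge 2\delta(\lambda)$, and Lemma~\ref{lemma:convergence}, applied at the coordinates where $\lambda$ has stabilized, upgrades this to unit multiples of pure powers $u\cdot t_i^{\alpha_i}\in\widehat R$ at the relevant valuations. Reading the generating valuation vectors of $\Sigma$ off $\lambda$, I would then choose, for each such vector, an element of $\widehat R$ with exactly that leading valuation; these are the candidate generators.

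The core step is normalization. Using the automorphisms of $S$ that rescale and reparametrize each factor $\KK[[t_i]]$, together with subtraction of the high-order elements $t_i^k\in\widehat R$ already in hand, I would clear the higher-order tails of each candidate generator, bringing it into the normal form listed in the tables; the surviving tails are constrained exactly by the gaps of $\Sigma$, which is why only finitely many normal forms persist. Once the generators are in normal form, they generate a complete subalgebra $R_0\subseteq\psi(\widehat R)$ equal to the table ring $R_{\mathrm{tab}}$. Since $\lambda_{R_0}=\lambda_{R_{\mathrm{tab}}}=\lambda$, Lemma~\ref{lemma:d2} gives $\dim S/R_0=\delta(\lambda)=\dim S/\widehat R=\dim S/\psi(\widehat R)$, and the inclusion $R_0\subseteq\psi(\widehat R)$ of equal finite codimension forces $R_0=\psi(\widehat R)$. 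This identifies $\widehat R$ with the presented quotient.

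The hard part will be the simultaneous normalization in the multibranch cases ($r\ge 2$) and in the cases requiring several generators: clearing the tail of one generator via an automorphism of $S$ must not disturb the already-normalized shape of the others, and the cross-branch interaction terms — the data that the per-branch value semigroups miss — have to be eliminated coherently. This is precisely where the full multifiltration information, rather than the individual semigroups, is needed, and it is the step that distinguishes, for example, the tacnode from a cusp with smooth branch. I would also track the mild characteristic-dependent choices (completing squares and similar reparametrizations) entering the coordinate changes, checking that the normal forms in the tables remain valid over an arbitrary algebraically closed $\KK$.
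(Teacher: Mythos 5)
Your proposal is correct and follows essentially the same route as the paper's proof: read off $\Sigma$ from $\lambda$ via Remark~\ref{rem:sigma}, use Lemmas~\ref{lemma:convergence} and~\ref{lemma:d2} to obtain elements with controlled valuations, normalize the generators by automorphisms of $S$ and subtraction of high-order elements already in $R$, and conclude that the generated subalgebra has completion equal to $\widehat R$ because both have the same gap function (your codimension count via Lemma~\ref{lemma:d2} just makes explicit the step the paper leaves implicit). You also correctly identify the delicate point — the simultaneous, cross-branch normalization in the $r\ge 2$ cases — which the paper handles by case-by-case inspection in its worked examples.
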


\begin{table}
\small{	\[
	\begin{array}{|l| l| l | l |l|}
	\hline
		\lambda & \textrm{Elements in $\Sigma$} & \textrm{Generators} & \textrm{Relations} & \textrm{Description}\\
\hline
&&&&\\
\textrm{\bf 1.1} & 
\begin{array}{l}
2,3
\end{array}
&
\begin{array}{l}
x=t_1^2,y=t_1^3\\
\end{array}& 
\begin{array}{l}
x^3-y^2
\end{array}
& 
\begin{array}{l}
	\textrm{Cusp}\\
\end{array}\\
& & & &  \\
\hline
&&&&\\
\textrm{\bf 1.2} & 
\begin{array}{l}
(\infty,1),
(1,\infty)\\
\end{array}
&
\begin{array}{l}
x=t_1\\
y=t_2
\end{array}& 
\begin{array}{l}
xy
\end{array}
& 
\begin{array}{l}
	\textrm{Node}\\
\end{array}\\
& & & &  \\
\hline
\end{array}
\]}
\caption{Singularities with $\delta=1$}\label{table:s1}
\end{table}
\begin{table}
\small{	\[
	\begin{array}{|l| l| l | l |l|}
	\hline
		\lambda & \textrm{Elements in $\Sigma$} & \textrm{Generators} & \textrm{Relations} & \textrm{Description}\\
\hline
&&&&\\
\textrm{\bf 2.1.a} & 
\begin{array}{l}
3,4,5
\end{array}
&
\begin{array}{l}
x_1=t_1^3,x_2=t_1^4\\
x_3=t_1^5
\end{array}& 
\begin{array}{l}
x_1x_3-x_2^2\\
x_1^3-x_2x_3\\
x_1^2x_2-x_3^2
\end{array}
& 
\begin{array}{l}
	\textrm{$(3,4,5)$-cusp}\\
\end{array}\\
& & & &  \\
\hline
&&&&\\
\textrm{\bf 2.1.b} & 
\begin{array}{l}
2,4,5
\end{array}
&
\begin{array}{l}
x=t_1^2,y=t_1^5\\
\end{array}& 
\begin{array}{l}
x^5-y^2
\end{array}
& 
\begin{array}{l}
	\textrm{Rhamphoid cusp}\\
\end{array}\\
& & & &  \\
\hline
&&&&\\
\textrm{\bf 2.2.a} & 
\begin{array}{l}
(\infty,2),
(\infty,3)\\
(2,\infty),
(3,\infty)\\
(1,1)\\
\end{array}
&
\begin{array}{l}
x=t_1+t_2\\
y=t_2^2
\end{array}& 
\begin{array}{l}
y(x^2-y)
\end{array}
& 
\begin{array}{l}
	\textrm{Tacnode}\\
\end{array}\\
& & & &  \\
\hline
&&&&\\
\textrm{\bf 2.2.b} & 
\begin{array}{l}
(2,\infty)\\
(3,\infty)\\
(\infty,1)\\
\end{array}
&
\begin{array}{l}
x=t_2\\
y=t_1^2\\
z=t_1^3\\
\end{array}& 
\begin{array}{l}
xy,xz\\
y^3-z^2
\end{array}
& 
\begin{array}{l}
	\textrm{Cusp with}\\
	\textrm{smooth branch}\\
\end{array}\\
& & & &  \\
\hline
&&&&\\
\textrm{\bf 2.3} & 
\begin{array}{l}
(1,\infty,\infty)\\
(\infty,1,\infty)\\
(\infty,\infty,1)\\
\end{array}
&
\begin{array}{l}
x=t_1\\
y=t_2\\
z=t_3
\end{array}& 
\begin{array}{l}
xy,xz\\
yz
\end{array}
& 
\begin{array}{l}
	\textrm{Ordinary}\\
	\textrm{triple point}\\
\end{array}\\
& & & &  \\
\hline
\end{array}
\]}
\caption{Singularities with $\delta=2$}\label{table:s2}
\end{table}

\begin{table}
\small{	\[
	\begin{array}{|l| l| l | l |l|}
	\hline
		\lambda & \textrm{Elements in $\Sigma$} & \textrm{Generators} & \textrm{Relations} & \textrm{Description}\\
\hline
&&&&\\
\textrm{\bf 3.1.a} & 
\begin{array}{l}
4,5,6,7
\end{array}
&
\begin{array}{l}
x_1=t_1^4,x_2=t_1^5\\
x_3=t_1^6,x_4=t_1^7
\end{array}& 
\begin{array}{l}
x_1x_3-x_2^2\\
x_1x_4-x_3^3\\
x_2x_4-x_3^2\\
x_1^2x_3-x_4^2\\
x_1^2x_2-x_3x_4\\
x_1^3-x_2x_4\\
\end{array}
& 
\begin{array}{l}
	\textrm{$(4,5,6,7)$-cusp}\\
\end{array}\\
& & & &  \\
\hline
&&&&\\
\textrm{\bf 3.1.b} & 
\begin{array}{l}
3,5,7
\end{array}
&
\begin{array}{l}
x_1=t_1^3,x_2=t_1^5\\
x_3=t_1^7\\
\end{array}& 
\begin{array}{l}
x_1x_3-x_2^2\\
x_1^3x_2-x_3^2\\
x_2x_3-x_1^4\\
\end{array}
& 
\begin{array}{l}
	\textrm{$(3,5,7)$-cusp}\\
\end{array}\\
& & & &  \\
\hline
&&&&\\
\textrm{\bf 3.1.c} & 
\begin{array}{l}
3,4 
\end{array}
&
\begin{array}{l}
x =t_1^3,y =t_1^4\\
\end{array}
& 
\begin{array}{l}
x^4-y^3
\end{array}
& 
\begin{array}{l}
	\textrm{$(3,4)$-cusp}\\
\end{array}\\
& & & &  \\
\hline
&&&&\\
\textrm{\bf 3.1.d} & 
\begin{array}{l}
2,7
\end{array}
&
\begin{array}{l}
x=t_1^2,y=t_1^7\\
\end{array}& 
\begin{array}{l}
x^7-y^2\\
\end{array}
& 
\begin{array}{l}
	\textrm{$(2,7)$-cusp}\\
\end{array}\\
& & & &  \\
\hline
\end{array}
\]}
\caption{Singularities with $\delta=3,r=1$}\label{table:s31}
\end{table}

\begin{table}
\small{	\[
	\begin{array}{|l| l| l | l |l|}
	\hline
		\lambda & \textrm{Elements in $\Sigma$} & \textrm{Generators} & \textrm{Relations} & \textrm{Description}\\
\hline
&&&&\\
\textrm{\bf 3.2.a} & 
\begin{array}{l}
(3,\infty), (4,\infty)\\ (5,\infty), (\infty,1)
\end{array}
&
\begin{array}{l}
x_1=t_1^3,x_2=t_1^4\\
x_3=t_1^5,y=t_2
\end{array}& 
\begin{array}{l}
x_1y,x_2y,x_3y\\
x_1x_3-x_2^2\\
x_1^3-x_2x_3\\
x_1^2x_2-x_3^2
\end{array}
& 
\begin{array}{l}
	\textrm{$(3,4,5)$-cusp with}\\
\textrm{smooth branch}
\end{array}\\
& & & &  \\
\hline
&&&&\\
\textrm{\bf 3.2.b} & 
\begin{array}{l}
(2,\infty), (5,\infty)\\ (\infty,1)
\end{array}
&
\begin{array}{l}
x_1=t_1^2,x_2=t_1^5\\y=t_2
\end{array}& 
\begin{array}{l}
x_1y,x_2y\\
x_1^5-x_2^2
\end{array}
& 
\begin{array}{l}
	\textrm{Rhamphoid cusp}\\
	\textrm{with smooth branch}
\end{array}\\
&&&&\\
\hline
&&&&\\
\textrm{\bf 3.2.c} & 
\begin{array}{l}
(2,\infty), (3,\infty)\\ (\infty,2),(\infty,3)
\end{array}
&
\begin{array}{l}
x_1=t_1^2,x_2=t_1^3\\y_1=t_2^2,y_2=t_2^3
\end{array}& 
\begin{array}{l}
x_1y_1,x_1y_2\\
x_2y_1,x_2y_2\\
x_2^3-x_2^2\\
y_2^3-y_2^2
\end{array}
& 
\begin{array}{l}
	\textrm{Two independent}\\
	\textrm{cusps}
\end{array}\\
&&&&\\
\hline
&&&&\\
\textrm{\bf 3.2.d} & 
\begin{array}{l}
(3,\infty), (4,\infty)\\ 
(5,\infty), (\infty,2)\\
(\infty,3), (2,1)
\end{array}
&
\begin{array}{l}
x=t_1^2+t_2\\y=t_1^3\\z=t_2^2
\end{array}& 
\begin{array}{l}
yz\\ z(x^2-z) \\ x^3-y^2-xz
\end{array}
& 
\begin{array}{l}
	\textrm{Cusp with}\\
	\textrm{collinear}\\
	\textrm{smooth branch}\\
\end{array}\\
&&&&\\
\hline
&&&&\\
\textrm{\bf 3.2.e} & 
\begin{array}{l}
(2,\infty), (5,\infty)\\ 
(\infty,2),(\infty,3)\\
(3,1)
\end{array}
&
\begin{array}{l}
x=t_1^3+t_2\\y=t_1^2
\end{array}& 
\begin{array}{l}
y(x^2-y^3)
\end{array}
& 
\begin{array}{l}
	\textrm{Cusp with}\\
	\textrm{coplanar}\\
	\textrm{smooth branch}\\
\end{array}\\
&&&&\\
\hline
&&&&\\
\textrm{\bf 3.2.f} & 
\begin{array}{l}
(3,\infty), (4,\infty)\\ 
(5,\infty), (\infty,3)\\ 
(\infty,4),(\infty,5)\\
(1,1)
\end{array}
&
\begin{array}{l}
x=t_1+t_2\\y=t_1^3
\end{array}& 
\begin{array}{l}
y(x^3-y)
\end{array}
& 
\begin{array}{l}
	\textrm{Node with}\\
	\textrm{third order}\\
	\textrm{contact}\\
\end{array}\\
&&&&\\
\hline
\end{array}
\]}
\caption{Singularities with $\delta=3,r=2$}\label{table:s32}
\end{table}

\begin{table}
\small{	\[
	\begin{array}{|l| l| l | l |l|}
	\hline
		\lambda & \textrm{Elements in $\Sigma$} & \textrm{Generators} & \textrm{Relations} & \textrm{Description}\\
\hline
&&&&\\
\textrm{\bf 3.3.a} & 
\begin{array}{l}
(1,\infty,\infty)\\
(\infty,1,\infty)\\
(\infty,\infty,2)\\
(\infty,\infty,3)\\
\end{array}
&
\begin{array}{l}
x_1=t_1,x_2=t_2\\
y=t_3^2,z=t_3^3
\end{array}& 
\begin{array}{l}
x_1x_2,x_1y\\
x_1z,x_2y\\
x_2z\\
y^3-z^2
\end{array}
& 
\begin{array}{l}
	\textrm{Cusp with 2}\\
\textrm{smooth branches}
\end{array}\\
& & & &  \\
\hline
&&&&\\
\textrm{\bf 3.3.b} & 
\begin{array}{l}
(1,\infty,\infty)\\
(\infty,2,\infty)\\
(\infty,3,\infty)\\
(\infty,\infty,2)\\
(\infty,\infty,3)\\
(1,1,1)\\
\end{array}
&
\begin{array}{l}
x=t_1,y=t_2^2\\
z=t_2+t_3
\end{array}& 
\begin{array}{l}
xy,xz\\
y(z^2-y)
\end{array}
& 
\begin{array}{l}
	\textrm{Tacnode with}\\
	\textrm{extra smooth}\\
\textrm{branch}
\end{array}\\
& & & &  \\
\hline
&&&&\\
\textrm{\bf 3.3.c} & 
\begin{array}{l}
(2,\infty,\infty)\\
(3,\infty,\infty)\\
(1,1,2)\ \textrm{and}\\
\textrm{all permutations}
\end{array}
&
\begin{array}{l}
x=t_1+t_2\\
y=t_1+t_3
\end{array}& 
\begin{array}{l}
xy(x-y)
\end{array}
& 
\begin{array}{l}
	\textrm{Planar}\\
	\textrm{triple point}\\
\end{array}\\
& & & &  \\
\hline
&&&&\\
\textrm{\bf 3.4} & 
\begin{array}{l}
(1,\infty,\infty,\infty)\\
(\infty,1,\infty,\infty)\\
(\infty,\infty,1,\infty)\\
(\infty,\infty,\infty,1)\\
\end{array}
&
\begin{array}{l}
x=t_1\\
y=t_2\\
z=t_3\\
w=t_4
\end{array}& 
\begin{array}{l}
xy,xz,xw\\
yz,yw,zw
\end{array}
& 
\begin{array}{l}
	\textrm{Ordinary}\\
	\textrm{quadruple point}\\
\end{array}\\
& & & &  \\
\hline
\end{array}
\]}
\caption{Singularities with $\delta=3,r\geq 3$}\label{table:s33}
\end{table}

\begin{proof}
We outline our general argument here; the proof follows by inspection of the relevant tables.
By completing when necessary, we will always assume that our gap function $\lambda$ is obtained from a complete subalgebra $R$ of $S$.
\begin{enumerate}

\item[{\bf Step 1.}] Given $\lambda$, 
	we determine specific elements of $\Sigma$ and list them in the table. These elements are constructed using two techniques; the first is Lemma~\ref{lemma:convergence}. The second is the final observation of Remark~\ref{rem:sigma}.
\item[{\bf Step 2.}] For some specific $\alpha\in\Sigma$, we produce functions $f_\alpha\in R$ with $\val(f_\alpha)=\alpha$ and list them as generators. To guarantee that we can choose the $f_\alpha$ in the form we specify, we use ring automorphisms of $S$, paired with the existence of certain elements in $R$ of higher valuation.

\item[{\bf Step 3.}] Let $R'$ be the completion of the subalgebra inside $S$ generated by the $f_{\alpha}$. An explicit computation in each case shows that
	$\lambda=\lambda_{R'}$.

\item[{\bf Step 4.}] Compute the relations among the generators $f_{\alpha}$.
This is again a straightforward computation, which e.g.~may be carried out with the help of a computer algebra system.
\end{enumerate} 
\end{proof}

We illustrate the steps of the above proof in two examples. 

\begin{ex}[{\bf 3.1.c}]
The number of branches is $r=1$ and so $S = \KK[[t_1]]$. 
Since $\lambda(3) = \lambda(4) = \lambda(5)$, $3,4\in \Sigma$; likewise, $k\in \Sigma$, for $k\geq 6$. Hence there exists a function $x'\in R$ such that
$\val(x') = 3$. By an automorphism of $S$, we may assume $x' = t_1^3$.

Let $y'\in R$ be an element such that $\val(y') = 4$. So 
$y' = t_1^4+c\cdot t_1^5+\ldots$. Since $k\in \Sigma$ for all $k\geq 6$, we have functions $f_k\in R$ with valuation $\val(f_k) = k$, for all $k\geq 6$. 
We use these functions to get rid of the terms of  $y'$ of order at least $6$. In conclusion, we may assume that 
\[
x' = t_1^3; \\
y'= t_1^4 + c\cdot t_1^5
\]
are both in $R$.

We now consider the automorphism 
\[t_1\mapsto t_1-\frac{c}{4}t_1^2-\frac{c^2}{16}t_1^3.\]
The image of $y'$ is of the form $t_1^4+s$, where $\val(s)\geq 6$.
Likewise the image of $x'+\frac{3c}{4}y'$ is of the form $t_1^3+r$, where $\val(r)\geq 6$. Using the images of the $f_k$, $k\geq 6$, we may thus assume that 
\[
x = t_1^3; \\
y= t_1^4 
\]
are both in $R$.

The subalgebra generated by these two elements has the same function $\lambda$, so its completion must equal the completion of $R$. The unique relation between the generators $x$ and $y$ is $x^4-y^3$, and so the completion of $R$ has a presentation
\[
R \cong \KK[[x, y]]/(x^4-y^3).
\]
\end{ex}

\begin{ex}[{\bf 3.2.b}]
The number of branches is $r=2$ and so $S = \KK[[t_1]]\times\KK[[t_2]]$.  
Lemma~\ref{lemma:convergence} gives that $(2,\infty),(5,\infty),(\infty,1)$ all belong to $\Sigma$.

Let $x_1$ be a function whose valuation is $\val(x_1) = (2,\infty)$. Up to a ring automorphism of $\KK[[t_1]]$, 
we may assume $x_1 = t_1^2$. 
Let $x_2$ be a function whose valuation is $\val(x_2) = (5,\infty)$. Since $(k,\infty)\in \Sigma$ for all $k\geq 6$, we may kill any higher valuation terms of $x_2$ and assume that $x_2=t^5$. 

Finally, let $y$ be a function whose valuation is $\val(y) = (\infty,1)$. Again, up to an automorphism of $\KK[[t_2]]$, we may assume 
$y = t_2$. 
The completion of the algebra generated by $x_1,x_2,y$ agrees with the completion of $R$, and has presentation
\[
R'\cong \KK[[x_1,x_2,y]]/(x_iy, x_1^5-x_2^2).
\]
This is the union of a rhamphoid cusp with a smooth branch, with independent tangent spaces.
\end{ex}

\subsection{Gap Functions for Finite-Dimensional Vector Spaces}
Instead of considering a complete subalgebra $R\hookrightarrow S=\prod_{i=1}^r \KK[[t_i]]$,
we now consider a finite-dimensional $\KK$-vector subspace $R'$ of $S$ such that $\lambda'=\lambda_{R'}$ is a standard gap function.
Letting $R$ be the subring of $S$ generated by $R'$, we ask: what can we say about $\lambda_R$ based on $\lambda'$?
This is answered by the following proposition:
\begin{prop}\label{prop:linear}
Let $R'\subset S$ be a finite dimensional vector space leading to a standard gap function $\lambda'=\lambda_{R'}$. Assume that $R'$ contains $(1,\ldots,1)\in S$.  Let $R$ be the subalgebra of $S$ generated by $R'$, and assume that $\delta(\lambda_R)\leq 3$. Then $\lambda_R$ is one of the standard gap functions $\lambda$ classified in Proposition~\ref{prop:classifylambda} if and only if $\lambda'$ fulfills the conditions in the corresponding row of Table~\ref{table:vs}.
\end{prop}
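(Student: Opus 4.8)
The plan is to exploit the two structural relations between $R'$ and the subalgebra $R=\langle R'\rangle$ it generates: a pointwise inequality of gap functions, and the fact that the value semigroup $\Sigma=\val(R)$ is built additively out of the valuations realized in $R'$. First I would record the inequality. Since $R'\subseteq R$, for every $\alpha$ the subspace $R+\langle t_1^{\alpha_1},\dotsc,t_r^{\alpha_r}\rangle$ contains $R'+\langle t_1^{\alpha_1},\dotsc,t_r^{\alpha_r}\rangle$, so $S/(R+\langle t_i^{\alpha_i}\rangle)$ is a quotient of $S/(R'+\langle t_i^{\alpha_i}\rangle)$ and hence $\lambda_R(\alpha)\le\lambda'(\alpha)$ for all $\alpha$. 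Because we assume $\delta(\lambda_R)\le 3$, Proposition~\ref{prop:classifylambda} already tells us that $\lambda_R$ is exactly one of the finitely many functions tabulated in Tables~\ref{table:d1}, \ref{table:d2}, and~\ref{table:d3}; the remaining task is purely to decide, from the values of $\lambda'$, which row occurs.

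The engine of the argument is that $\Sigma=\val(R)$ can be detected from $\lambda'$. Applying Remark~\ref{rem:sigma} to the vector space $R'$, the membership $\alpha[i]\in\val(R')$ is equivalent to $\lambda'(\alpha)=\lambda'(\alpha+e_i)$, so which single valuations are realized in $R'$ is read directly off the jumps of $\lambda'$. On the algebra side, multiplying two witnessing elements shows that $\alpha[i],\beta[i]\in\val(R')$ force $(\alpha+\beta)[i]\in\Sigma$; iterating, the additive subsemigroup of $\ZZ_{\ge 0}^r$ generated by the $\val(R')$-jumps is contained in $\Sigma$. Combined with $\lambda_R\le\lambda'$ and the description of $\lambda_R$ through $\Sigma$ in Remark~\ref{heartproof}, this determines $\lambda_R$ on all small $\alpha$ from the jump data of $\lambda'$. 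The conditions recorded in each row of Table~\ref{table:vs} are precisely this finite list of jump requirements on $\lambda'$.

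With these tools the proof reduces to a finite verification, one row per type, in the same spirit as the case analysis in the proof of Proposition~\ref{prop:classifylambda}. For the sufficiency direction I would fix a row, use its $\lambda'$-hypotheses to produce the required generators of $\Sigma$ by taking products, compute the resulting $\lambda_R$ on the finitely many $\alpha$ with $|\alpha|\le 6$, and then invoke Lemma~\ref{lemma:help} together with upward propagation (Remark~\ref{upward}) to extend to all $\alpha$ and confirm that $\lambda_R$ matches the tabulated function. Since the rows are mutually exclusive and, by Proposition~\ref{prop:classifylambda}, the functions $\lambda_R$ with $\delta\le 3$ are exhausted by these types, the necessity direction follows formally: if $\lambda_R$ is of type $X$, then $\lambda'$ satisfies some row $Y$, sufficiency gives that $\lambda_R$ is of type $Y$, whence $X=Y$.

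I expect the main obstacle to be the gap between "valuations generated by products of $R'$-elements" and the full semigroup $\Sigma=\val(R)$: a sum $f+g$ of products in $R$ can have strictly larger valuation than $\min\{\val(f),\val(g)\}$ when leading terms cancel, potentially creating an element of $\Sigma$ invisible to the naive product generation. I would handle this by checking that, in the range $|\alpha|\le 6$ relevant to $\delta(\lambda_R)\le 3$, any such cancellation that enlarges $\Sigma$ already forces $\lambda_R<\lambda'$ in a manner recorded by the tabulated jumps — equivalently, that once the row conditions hold, the minimal generators of the corresponding semigroup are realized by single elements of $R'$. Verifying this realizability, and with it that the row conditions are not merely sufficient but sharp, is the delicate part that the exclusivity-and-exhaustiveness bookkeeping above is designed to keep finite.
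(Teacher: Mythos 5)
Your proposal has the right overall shape --- a finite, row-by-row verification driven by reading $\val(R')$ off the jumps of $\lambda'$ via Remark~\ref{rem:sigma}, generating part of $\Sigma=\val(R)$ by products, and playing this against the classification of Proposition~\ref{prop:classifylambda} --- but it has a genuine gap, and it is exactly the one you flag and then defer. To pin down $\lambda_R$ you need not only to show that certain elements lie in $\Sigma$ (which products of $R'$-elements give you) but also that certain elements do \emph{not} lie in $\Sigma$, and the latter is threatened precisely by cancellation in sums. The paper closes this with a structural statement (Remark~\ref{rem:key2}): writing $R=R'+R''$ with $R''$ spanned by the nontrivial products of elements of $R'\cap\mfm$, every $u\in\Sigma$ either lies in $\Sigma'=\val(R')$ or in $\Sigma''=\val(R'')$, or satisfies $u_i\geq\min(u_i',u_i'')$ for some $u'\in\Sigma'$, $u''\in\Sigma''$, with strict inequality only in coordinates where $u_i'=u_i''$. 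Since every element of $\Sigma''$ has all coordinates at least $2$, this forces any small element of $\Sigma$ (e.g.\ one with a coordinate equal to $1$) into $\Sigma'$, which is how both the necessary conditions on $\lambda'$ and the exclusion of ``invisible'' semigroup elements are obtained; see e.g.\ the treatment of case {\bf 2.2.b}, where one needs $(3,1)\in\Sigma$ to force $(3,1)\in\Sigma'$. Your plan to ``check that any such cancellation already forces $\lambda_R<\lambda'$'' is a restatement of the difficulty, not a resolution of it; without a lemma of the above form the case analysis cannot be completed.

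The second problem is your derivation of necessity from sufficiency. You assert that if $\lambda_R$ is of type $X$ then $\lambda'$ satisfies \emph{some} row, but nothing guarantees this a priori: the standard gap functions of finite-dimensional subspaces form a much larger class than the list of rows, so this step assumes the conclusion. Moreover the rows are not all mutually exclusive in the way your bookkeeping requires (several singularity types deliberately share a row, cf.\ Remark~\ref{rem:caveat}, and some rows' conditions are contained in others'). The paper instead proves necessity directly: the inequality $\lambda_R\leq\lambda'$ supplies the lower bounds on $\lambda'$ appearing in Table~\ref{table:vs}, and Remark~\ref{rem:key2} converts membership statements about $\Sigma$ into membership statements about $\Sigma'$, hence into equalities for $\lambda'$. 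You would need to argue this direction on its own rather than deducing it formally.
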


We remark that  some of the cases  in Table~\ref{table:vs} include
  the condition $\lambda'(1, \dotsc, 1) = r-1$.
In the setting of Proposition~\ref{prop:linear} this seems redundant,
  as the gap function $\lambda'$ is required to be standard.
However, we will state a slightly stronger implication in 
  Lemma~\ref{lem:stronger_for_vs} for which this condition is important.

\begin{table}
	\begin{tabular}{|l |l |}
		\hline
\rule{0pt}{4ex}$\lambda$ & Conditions on $\lambda'$\\	
\hline
\rule{0pt}{4ex}{\bf 1.1} & $\lambda'(2)=\lambda'(4)=1$ \\
\rule{0pt}{4ex}{\bf 1.2} & $\lambda'(1,1)=\lambda'(2,2)=1$ \\ 
\hline
\hline
\rule{0pt}{4ex}{\bf 2.1.a} & $\lambda'(3)=\lambda'(5)=2$ \\
\rule{0pt}{4ex}{\bf 2.1.b} or {\bf 3.1.d}& $\lambda'(2)=\lambda'(3)=1,\lambda'(4)=2$ \\
\rule{0pt}{4ex}{\bf 2.2.a} or {\bf 3.2.f} & \begin{tabular}{@{}l}$\lambda'(1,1)=\lambda'(1,2)=\lambda'(2,1)=1$, $\lambda'(2,2)=2$;
 \end{tabular} \\
\rule{0pt}{4ex}{\bf 2.2.b} & \begin{tabular}{@{}l} $\lambda'(2,1)=\lambda'(4,2)=2$\end{tabular} \\
\rule{0pt}{4ex}{\bf 2.3} & $\lambda'(1,1,1)=\lambda'(2,2,2)=2$ \\
\hline
\hline
\rule{0pt}{4ex}{\bf 3.1.a} & $\lambda'(4)=3$  \\
\rule{0pt}{4ex}{\bf 3.1.b} & $\lambda'(3)=\lambda'(4)=2$, $\lambda'(5)=3$ \\
\rule{0pt}{4ex}{\bf 3.1.c} & $\lambda'(3)=\lambda'(5)=2$, $\lambda'(6)=3$ \\
\hline
\hline
\rule{0pt}{4ex}{\bf 3.2.a} & $\lambda'(3,1)=3$  \\
\rule{0pt}{4ex}{\bf 3.2.b} &  $\lambda'(2,1)=\lambda'(3,2)=2$, $\lambda'(4,1)=3$\\
\rule{0pt}{4ex}{\bf 3.2.c} &  $\lambda'(2,2)=3$\\
\rule{0pt}{4ex}{\bf 3.2.d} &  $\lambda'(2,1)=\lambda'(2,2)=\lambda'(3,1)=2$, $\lambda'(3,2)=3$\\
\rule{0pt}{4ex}{\bf 3.2.e} &  $\lambda'(2,1)=\lambda'(3,2)=\lambda'(4,1)=2$, $\lambda'(4,2)=3$ \\
\hline
\hline
\rule{0pt}{4ex}{\bf 3.3.a} & $\lambda'(1,1,2)=3$  \\
\rule{0pt}{4ex}{\bf 3.3.b} & \begin{tabular}{@{}l} $\lambda'(1,1,1)=\lambda'(1,1,2)=\lambda'(1,2,1)=\lambda'(2,1,1)=2$;\\  $\lambda'(1,2,2)=3$ \end{tabular}\\
\rule{0pt}{4ex}{\bf 3.3.c} & \begin{tabular}{@{}l} $\lambda'(1,1,1)=\lambda'(1,2,2)=\lambda'(2,1,2)=\lambda'(2,2,1)=2$;\\  $\lambda'(2,2,2)=3$ \end{tabular}\\
\rule{0pt}{4ex}{\bf 3.4} & $\lambda'(1,1,1,1)=3$ \\
\hline
\end{tabular}\\
\vspace{1cm}
\caption{Conditions determining standard gap functions for $\delta\leq 3$}\label{table:vs}
\end{table}

\begin{proof}
Let $\mfm$ denote the ideal of $S$ generated by $t_1,\ldots,t_r$. 
By our assumption on $R'$, every element of $R'$ is of the form $c\cdot (1,\ldots,1)+s$ for some $c\in\KK$ and $s\in R'\cap \mfm$.
Let $R''$ be the span of all elements in $R$ arising (non-trivially) as products of elements of $R'\cap\mfm$. Then $R=R'+R''$. We let $\Sigma',\Sigma''$ denote the images of $R',R''$ under $\val$. The following observation is central to our argument:
\begin{rem}\label{rem:key2}
If $u\in \Sigma$, then either $u\in \Sigma'$, or $u\in \Sigma''$ or there exists $u',u''$ in $\Sigma',\Sigma''$ such that for all $i$, $u_i\geq \min (u_i',u_i'')$ with strict inequality  only if $u_i'=u_i''$.
\end{rem}

Our general strategy is to first show that if $\lambda_R=\lambda$, then $\lambda'$ must satisfy the conditions of Table~\ref{table:vs}.
We obtain conditions on $\lambda'$ in two ways. Firstly, $\lambda$ gives a lower bound for $\lambda'$. Secondly, using Remark~\ref{rem:sigma} we determine which $\alpha\in\ZZ_{\geq 0}^r$ are in $\Sigma$; using Remark~\ref{rem:key2}, we obtain some elements which \emph{must} be in $\Sigma'$, which then again by Remark~\ref{rem:sigma} gives conditions on $\lambda'$.

Once we have obtained necessary conditions on $\lambda'$, we observe conversely that $\lambda_R=\lambda$ holds. We again have two techniques. Firstly, the conditions on $\lambda'$ guarantee that $\Sigma'$ must or must not contain certain elements, which coupled with Remark~\ref{rem:key2} leads to conditions on $\lambda_R$. Secondly, using our classification of standard gap functions, we are able to rule out other possibilities.

The arguments are routine. We leave the details to the reader after illustrating them in several examples below. In the following, we will make use of the natural partial order on $\Sigma$. 
\end{proof}
\begin{ex}[{\bf 1.2}]
The set $\Sigma$ must contain $(1,1)$ so by Remark~\ref{rem:key2} $\Sigma'$ must contain $(1,1)$. The smallest element of $\Sigma''$ is $(2,2)$, and since $\Sigma$ contains $(2,1)$, $\Sigma'$ must contain $(k,1)$ for some $k\geq 2$.
It follows that $\lambda'(1,1)=\lambda'(2,2)=1$. Conversely, the only standard gap function in our classification satisfying this is {\bf 1.2}.
\end{ex}
\begin{ex}[{\bf 2.1.b} or \bf{3.1.d}]
In both cases, the set $\Sigma'$ must contain $2$ but not $1,3$ by Remark~\ref{rem:key2}. The condition on $\lambda'$ follows. Conversely, the only standard gap functions in our classification satisfying this condition are {\bf 2.1.b} and {\bf 3.1.d}.
It is impossible to differentiate between these two cases only using conditions on $\lambda'$. Indeed, one can take $R'$ to be the vector space generated by $1, t^2,t^4+ct^5,t^6,t^7$ for some $c\in \KK$. The function $\lambda'$ is independent of $c$, taking values $0,1,1,2,2,3,3,3,4,5,6,7,\ldots$. However, when $c=0$ we are in case {\bf 3.1.d} whereas when $c\neq 0$ we are in case {\bf 2.1.b}.
\end{ex}

\begin{ex}[{\bf 2.2.a} or {\bf 3.2.f}]
	In both cases, we see that $\Sigma'$ must contain $(1,1)$, but not $(1,k)$, $(k,1)$ for $k>1$ as these do not belong to $\Sigma$. This implies the required conditions on $\lambda'$ by upward propagation. It is straightforward to check that {\bf 2.2.a} and {\bf 3.2.f} are the only cases for which these conditions could be fulfilled.
It is impossible to differentiate between these two cases only using conditions on $\lambda'$. Indeed, one can take $R'$ to be the vector space generated by $1, t_1+t_2,t_1^3,t_1^2+t_2^2+c(t_1^2+t_2^3)$ for some $c\in \KK\setminus\{-1\}$. The function $\lambda'$ is easily seen to be independent of $c$.
However, when $c=0$ we are in case {\bf 3.2.f} whereas when $c\neq 0$ we are in case {\bf 2.2.a}.
\end{ex}

\begin{ex}[{\bf 2.2.b}]
	The smallest non-zero element of $\Sigma$ is $(2,1)$, so this must be in $\Sigma'$. This implies that the smallest element of $\Sigma''$ is $(4,2)$. Since $\Sigma$ contains $(3,1)$ and $(4,1)$, Remark \ref{rem:key2} implies that $\Sigma'$ contains $(3,1)$ and $(k,1)$ for some $k\geq 4$. Together, this implies that $\lambda'(2,1)=\lambda'(4,2)=2$.

	Conversely, if $\lambda'$ fulfills these conditions we are clearly in case {\bf 2.2.b}.
\end{ex}

\begin{ex}[{\bf 3.2.e}]
	The smallest non-zero element of $\Sigma$ is $(2,1)$, so this must be in $\Sigma'$. This implies that the smallest element of $\Sigma''$ is $(4,2)$, so $\Sigma'$ also contains $(3,1)$. Together, this implies that $\lambda'(2,1)=\lambda'(3,2)=\lambda'(4,1)=2$.
	Furthermore, we must have $\lambda'(4,2)=3$,  since $\lambda(4,2)=3$.

	Conversely, if $\lambda'$ fulfills these conditions we are clearly in case {\bf 3.2.e}.
\end{ex}

\begin{ex}[{\bf 3.3.b}]
	Since $\Sigma$ contains $(1,1,1)$, $\Sigma'$ must as well by Remark \ref{rem:key2}. The conditions on $\lambda'$ follow. Conversely, if $\lambda'$ fulfills these conditions, we clearly cannot be in case {\bf 3.3.a}; we cannot be in case {\bf 2.3} either as in the latter case $\lambda'(2,2,2)=2$. We rule out {\bf 3.3.c} as well since in that case, $\Sigma'$ must also contain e.g. $(1,1,k)$ for some $k\geq 2$ so $\lambda'(1,2,2)$ would be $2$.
\end{ex}

We conclude this section with an observation that will be useful for proving the relations of the singularities
   with families of Schubert varieties, as described in \S\ref{sec_schubert_conditions}. 

\begin{lemma}\label{lem:stronger_for_vs}
   Let $R'\subset S$ be a finite dimensional vector space containing a unit of $S$ 
      and leading to a gap function $\lambda'=\lambda_{R'}$. 
   If $\lambda'$ satisfies the conditions in one of the rows of Table~\ref{table:vs},
      then $\lambda'$ is a standard gap function.
\end{lemma}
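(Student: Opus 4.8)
The plan is to check the two defining conditions of a standard gap function directly: that $\lambda'(e_i)=0$ for every $i$, and that $\lambda'(e_1+\dots+e_r)=r-1$. The first comes for free: since $R'$ contains a unit of $S$, the observation recorded in \S\ref{sec:prelim} immediately after the definition of a standard gap function gives $\lambda'(e_i)=0$ for all $i$. When $r=1$ the two conditions coincide (both read $\lambda'(1)=0$), so the lemma holds by the unit hypothesis alone. Thus I may assume $r\ge 2$, and the entire content reduces to the single equality $\lambda'(1,\dots,1)=r-1$.

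For the upper bound I would telescope the elementary inequality $\lambda'(\alpha+e_i)\le\lambda'(\alpha)+1$ of Remark~\ref{rem:sigma}. Beginning from $\lambda'(e_1)=0$ and adjoining $e_2,\dots,e_r$ one coordinate at a time, this gives
\[
\lambda'(1,\dots,1)\le\lambda'(e_1)+(r-1)=r-1.
\]
The same iteration, used in both directions, records the monotonicity I will rely on for the reverse inequality: whenever $\beta\ge\gamma$ coordinatewise,
\[
\lambda'(\gamma)\le\lambda'(\beta)\le\lambda'(\gamma)+|\beta-\gamma|.
\]

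For the lower bound I would argue uniformly across the rows of Table~\ref{table:vs}. The key point is that each row with $r\ge 2$ either lists $\lambda'(1,\dots,1)=r-1$ outright (rows \textbf{1.2}, \textbf{2.2.a}/\textbf{3.2.f}, \textbf{2.3}, \textbf{3.3.b}, \textbf{3.3.c}, \textbf{3.4}), or contains a condition of the borderline form $\lambda'(\beta)=|\beta|-1$ at some $\beta\ge(1,\dots,1)$: for example $\beta=(2,1)$ with value $2$ in \textbf{2.2.b}, \textbf{3.2.b}, \textbf{3.2.d}, \textbf{3.2.e}; $\beta=(3,1)$ with value $3$ in \textbf{3.2.a}; $\beta=(2,2)$ with value $3$ in \textbf{3.2.c}; and $\beta=(1,1,2)$ with value $3$ in \textbf{3.3.a}. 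Applying the monotonicity bound to such a $\beta$ and $\gamma=(1,\dots,1)$ yields
\[
|\beta|-1=\lambda'(\beta)\le\lambda'(1,\dots,1)+(|\beta|-r),
\]
so $\lambda'(1,\dots,1)\ge r-1$. Together with the upper bound this forces equality, proving $\lambda'$ standard.

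The main obstacle is organizational rather than mathematical: one must confirm by inspection that every row of Table~\ref{table:vs} supplies at least one value realizing $\lambda'(\beta)=|\beta|-1$ at a point $\beta$ with all coordinates positive. Rows that also list a value with a larger defect---such as $\lambda'(4,1)=3$ in \textbf{3.2.b} or $\lambda'(3,2)=3$ in \textbf{3.2.d}---cause no trouble, since in each such case a sharper condition at a smaller $\beta$ is present and is the one to use. This is exactly why the hypothesis that $R'$ contains a unit (which pins down the upper bound) must be retained even though the statement concerns the conditions of the table; the monotonicity inequalities alone would only give the lower bound.
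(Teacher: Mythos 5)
Your proof is correct and follows essentially the same route as the paper's: the unit hypothesis gives $\lambda'(e_i)=0$ and, by telescoping the step inequality of Remark~\ref{rem:sigma}, the upper bound $\lambda'(1,\dots,1)\le r-1$, while the lower bound comes from observing that every row of Table~\ref{table:vs} contains a condition of the form $\lambda'(\beta)=|\beta|-1$ at some $\beta\in\NN^r$. Your row-by-row verification of that last observation is just a more explicit version of what the paper asserts by inspection.
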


\begin{proof}
   Since $R'$ contains a unit, it follows that $\lambda'(e_i)=0$ and $\lambda'(1, \dotsc, 1) \le r-1$. 
   If remains to show that $\lambda'(1, \dotsc, 1) \ge r-1$, 
      which follows since each row of Table~\ref{table:vs} contains a condition of the form 
      $\lambda'(\alpha)=|\alpha|-1$ for some $\alpha\in \NN^{r}$ 
      (in some cases this is satisfied directly for $\alpha = (1, \dotsc, 1)$).
\end{proof}

\section{Singularities from Projections}\label{sec:project}
\subsection{Setup and First Results}
We now finally return to the geometric situation of considering the singularities of a curve that arise via projection.
Fix a smooth projective curve $X$ with a very ample line bundle $\mcL$ of degree $d$. We will use notation as in \S \ref{sec:osc}.
In particular, set $W=H^0(X,\mcL)$ with dual space $V$.
Since we are assuming that $\mcL$ is very ample, we can view $X$ as being embedded in $\PP(V)$.

We are interested in stratifying the Grassmannian of codimension-$(n+1)$-planes $L$ in $V$ according to the singularities of $\phi(X)$, where $\phi:X\to \PP^n$ is the projection with center $\PP(L)$.
As in the statement of Theorem \ref{thm:2}, $\ell=\dim V-(n+1)$, that is, it will be the dimension of $L$.

\begin{lemma}\label{lemma:bir}
Let $L\subset V$ be a codimension-$(n+1)$ subspace, and $M=L^\perp\subset W$.
Then $M$ is basepoint free if and only if $\PP(L)\cap X=\emptyset$. 
Furthermore, assuming that $M$ is basepoint free:
\begin{enumerate}
	\item $\phi$ is generically one-to-one if and only if $\PP(L)$ intersects only finitely many secant lines of $X$.\label{cl:1}
	\item $\phi$ is generically unramified if and only if $\PP(L)$ intersects only finitely many tangent lines of $X$.\label{cl:2}
	\item $\phi$ is birational onto its image if and only if $\PP(L)$ intersects only finitely many secant and tangent lines  of $X$.\label{cl:3}
\end{enumerate}
\end{lemma}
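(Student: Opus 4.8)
The plan is to reduce every assertion to a finiteness statement on the curve $X$ via the linear algebra of the projection $\pi_L\colon \PP(V)\dashrightarrow\PP(V/L)$. Throughout I identify a point $P\in X$ with the line $\hat P\subset V$ it spans, and I use that $M=L^\perp$ forces $M^\perp=L$. First, $P=[v]$ is a base point of $M$ iff $s(v)=0$ for every $s\in M$, i.e.\ $v\in M^\perp=L$, i.e.\ $P\in\PP(L)$; thus the base locus is $X\cap\PP(L)$, so $M$ is basepoint free iff $\PP(L)\cap X=\emptyset$. Assuming this, $\phi$ is a morphism and no $\hat P$ lies in $L$. The elementary observation $\pi_L([u])=\pi_L([u'])\iff u'\in\KK u+L$ then gives two dictionaries: \emph{(A)} for distinct $P=[v],Q=[w]$ one has $\phi(P)=\phi(Q)$ iff $\langle v,w\rangle\cap L\neq0$, i.e.\ the secant $\overline{PQ}$ meets $\PP(L)$; and \emph{(B)}, applying the same computation to a tangent vector spanning $V^2(P)/\hat P$, the morphism $\phi$ is ramified at $P$ (that is $d\phi_P=0$) iff $V^2(P)\cap L\neq0$, i.e.\ the tangent line $\PP(V^2(P))$ meets $\PP(L)$. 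The one point requiring care is that $v,w\notin L$ (basepoint freeness), which is exactly what makes $\langle v,w\rangle\cap L\neq0$ equivalent to $w-cv\in L$ for some scalar $c$.

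With these in hand, item (\ref{cl:2}) is quick: the locus $R=\{P: V^2(P)\cap L\neq0\}$ is closed in $X$, so $\phi$ is generically unramified iff $R\neq X$ iff $R$ is finite. Since $X$ is non-degenerate in $\PP(V)$ with $\dim\PP(V)\ge2$, it is not a line, so the tangent-line map $P\mapsto\PP(V^2(P))$ is non-constant, hence finite onto its image; therefore $R$ is finite iff its image, the set of tangent lines meeting $\PP(L)$, is finite. For item (\ref{cl:1}), let $Z=\{(P,Q)\in X\times X:\phi(P)=\phi(Q)\}$, a closed symmetric subset, and put $T=Z\setminus\Delta$. Because a secant meets $X$ in at most $d$ points, the secant-line map is finite-to-one, so only finitely many secants meet $\PP(L)$ iff $T$ is finite. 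Now $\phi$ is generically one-to-one iff $\mathrm{pr}_1(T)$ is not dense in $X$; this certainly holds when $T$ is finite, and conversely if $T$ is infinite then some one-dimensional component $\Gamma$ of $\overline T$ lies in $Z$, and by the symmetry of $Z$ at least one of $\mathrm{pr}_1(\Gamma),\mathrm{pr}_2(\Gamma)$ is one-dimensional, forcing $\mathrm{pr}_1(T)$ to be dense. This proves item (\ref{cl:1}).

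Item (\ref{cl:3}) follows formally: the induced morphism $\phi\colon X\to C$ onto its image is finite, and $\deg\phi=[K(X):K(C)]$ factors as the product of its separable and inseparable degrees, where the separable degree equals the number of points in a general fibre and $\phi$ is separable precisely when it is generically unramified. Hence $\phi$ is birational iff both degrees equal one, i.e.\ iff $\phi$ is simultaneously generically one-to-one and generically unramified, which by items (\ref{cl:1}) and (\ref{cl:2}) means exactly that $\PP(L)$ meets only finitely many secant and tangent lines.

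The step I expect to be the main obstacle is the bookkeeping in item (\ref{cl:1}): matching ``finitely many coincident pairs'' with ``finitely many secant \emph{lines}'', and upgrading non-density of a single projection to genuine generic injectivity. The symmetry of $Z$ is what bridges this. A secondary subtlety, invisible in characteristic zero, is that generic unramifiedness is the genuine separability condition (so item (\ref{cl:2}) is non-vacuous); keeping the separable/inseparable factorization explicit in item (\ref{cl:3}) is what makes all three equivalences hold uniformly in arbitrary characteristic.
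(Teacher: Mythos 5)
Your proof is correct and follows essentially the same route as the paper's: base locus equals $X\cap\PP(L)$, identification of points corresponds to secants meeting $\PP(L)$, vanishing differential corresponds to tangents meeting $\PP(L)$, and birationality is generic injectivity plus generic unramifiedness. The paper states these equivalences without elaboration, whereas you supply the supporting details (the symmetry argument for the incidence locus $Z$ and the separable/inseparable degree factorization in positive characteristic), which are welcome but do not change the approach.
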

\begin{proof}
A point $P\in X$ is a basepoint of $M$ if and only if every section of $M$ vanishes at $P$, or equivalently, the point $P$ is contained in $\PP(L)$; the claim regarding basepoint freeness of $M$ follows.

Assuming that $\PP(L)\cap X=\emptyset$, we note that points $P_1,P_2\in X$ are identified under the projection if and only if $\PP(L)$ meets the secant line through $P_1,P_2$. Claim~\ref{cl:1}
follows.
Likewise, the map $\phi$ has vanishing differential at $P\in X$ if and only if $\PP(L)$ meets the tangent line though $P$; claim~\ref{cl:2} follows. Claim~\ref{cl:3} now follows since being birational onto the image is the same as being generically one-to-one and unramified. 
\end{proof}

\begin{lemma}\label{lemma:sec}
Assume that $\PP(L)\cap X=\emptyset$ and $2\ell < d-2\rho_g$. Then $\PP(L)$ only intersects finitely many secant and tangent lines of $X$.
\end{lemma}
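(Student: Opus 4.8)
The plan is to reduce the statement, via Lemma~\ref{lemma:bir}, to showing that the projection $\phi\colon X\to\PP^n$ with centre $\PP(L)$ is birational onto its image; in fact I would prove the sharper statement that $\deg\phi=1$. Since $\PP(L)\cap X=\emptyset$, Lemma~\ref{lemma:bir} guarantees that $M=L^\perp$ is basepoint free, so $\phi$ is an everywhere-defined morphism with $\phi^*\CO_{\PP^n}(1)\cong\mcL$. Once $\phi$ is birational onto its image, claim~\ref{cl:3} of Lemma~\ref{lemma:bir} yields exactly the desired finiteness for secant \emph{and} tangent lines simultaneously, so no separate analysis of the two families is needed.

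First I would record that $C:=\phi(X)$ is nondegenerate in $\PP^n=\PP(V/L)$: a hyperplane containing $C$ corresponds to a nonzero $s\in(V/L)^*=M\subseteq W$ whose pullback $\phi^*s=s$ vanishes identically on $X$, which is impossible since $s\neq 0$ in $W=H^0(\mcL)$. Consequently $\deg C\geq n$, the classical lower bound on the degree of a nondegenerate integral curve in $\PP^n$. Now suppose for contradiction that $k:=\deg\phi\geq 2$. As $\phi$ is a finite morphism onto $C$ with $\phi^*\CO(1)\cong\mcL$, the projection formula gives $d=\deg\mcL=k\cdot\deg C$, whence $d=k\,\deg C\geq 2\,\deg C\geq 2n$.

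It then remains to see that $d\geq 2n$ contradicts the hypothesis $2\ell<d-2\rho_g$. Writing $\dim V=h^0(\mcL)=\ell+n+1$ and applying Riemann--Roch, $h^0(\mcL)=d+1-\rho_g+h^1(\mcL)$, so that
\[
n=d-\rho_g+h^1(\mcL)-\ell .
\]
Substituting into $d\geq 2n$ and using $h^1(\mcL)\geq 0$ gives $2\ell\geq d-2\rho_g+2h^1(\mcL)\geq d-2\rho_g$, contradicting $2\ell<d-2\rho_g$. Hence $\deg\phi=1$, so $\phi$ is birational onto its image, and claim~\ref{cl:3} of Lemma~\ref{lemma:bir} finishes the proof.

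The step I expect to require the most care is insisting on an \emph{honest} degree computation rather than mere generic injectivity. Over a field of positive characteristic, ``$\PP(L)$ meets only finitely many secant lines'' forces only that $\phi$ is generically one-to-one, i.e.\ purely inseparable onto its image; an inseparable $\phi$ would still have vanishing differential everywhere, so \emph{every} tangent line would meet $\PP(L)$. The virtue of the argument above is that it bounds the \emph{actual} degree $\deg\phi=[K(X):K(C)]$, thereby excluding purely inseparable maps as well and treating secant and tangent lines uniformly, with no appeal to generic smoothness.
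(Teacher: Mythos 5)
Your proof is correct, but it takes a genuinely different route from the paper's. The paper argues directly with the multifiltration: since $2(\ell+1)\leq d+1-2\rho_g$, Lemma~\ref{lemma:bound} shows that any $\ell+1$ tangent lines (and likewise the relevant configurations of secant lines) span a subspace of $V$ of the maximal possible dimension, so an $\ell$-dimensional $L$ cannot meet all of them; this gives the sharper quantitative statement that $\PP(L)$ meets at most $\ell$ tangent lines and at most $\ell$ pairwise disjoint secants, with a separate short argument excluding infinitely many secants through a common point. You instead reduce the lemma to birationality of $\phi$ via Lemma~\ref{lemma:bir}(\ref{cl:3}) and rule out $\deg\phi\geq 2$ globally: non-degeneracy of $C=\phi(X)$ gives $\deg C\geq n$, so $\deg\phi\geq 2$ would force $d\geq 2n$, which by your Riemann--Roch computation (the same one as in the remark following Theorem~\ref{thm:2}) is equivalent to $2\ell\geq d-2\rho_g+2h^1(\mcL)$, contradicting the hypothesis. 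Your approach is shorter, treats secants and tangents uniformly, and your insistence on bounding the actual degree rather than proving generic injectivity is well placed: in positive characteristic a purely inseparable projection would indeed have every tangent line meeting $\PP(L)$, and the degree computation excludes this cleanly. The trade-offs are that you import the classical lower bound $\deg C\geq\codim C+1$ for non-degenerate integral curves as an outside fact, and you lose the explicit numerical bounds that fall out of the paper's dimension count --- though those bounds are not used elsewhere, so the lemma as stated is fully established. Both arguments are valid in arbitrary characteristic.
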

\begin{proof}
	We first show that $\PP(L)$ intersects at most $\ell$ tangent lines. Indeed, suppose instead that $\PP(L)$ intersects tangent lines through  $P_1,\ldots,P_{\ell+1}\in X$.
The span of the preimages of these tangent lines in $V$ is $\langle V^2(P_1), \ldots, V^2(P_{\ell+1})\rangle$. Since $2(\ell+1)<d-2\rho_g+2$, 
taking $\F^\alpha=\langle V^2(P_1), \ldots, V^2(P_{\ell+1})\rangle$ in Lemma~\ref{lemma:bound} yields that these tangent lines are independent. For $L$ to intersect them all non-trivially, we must have $\dim L\geq \ell+1$, a contradiction.

	If $\PP(L)$ intersects $\ell+1$ secant lines through disjoint pairs of points, we would arrive at a similar contradiction; 
	here we are applying Lemma~\ref{lemma:bound} to
	\[\F^\alpha=\langle V^1(P_1),V^1(P_1'), \ldots, V^1(P_{\ell+1}),V^1(P_{\ell+1}')\rangle\] for distinct points $P_1,P_1',\ldots,P_{\ell+1},P_{\ell+1}'$.
	So it remains to show that $\PP(L)$ cannot intersect infinitely many secant lines through a single point $P$. To that end, suppose that $\PP(L)$ intersects the $\ell+1$ secant lines through $P$ and $P_i$, $i=1,\ldots,\ell+1$. Applying Lemma~\ref{lemma:bound} with
	\[\F^\alpha=\langle V^1(P),V^1(P_1), \ldots, V^1(P_{\ell+1})\rangle,\] 
 the span in $V$ of the preimages of these secant lines has dimension $\ell+2$. Since we have assumed that $\PP(L)$ does not intersect $P$, we must again have $\dim L \geq \ell+1$, a contradiction.
\end{proof}

By Lemmas~\ref{lemma:bir} and~\ref{lemma:sec}, we obtain statement (\ref{item:bir}) of Theorem~\ref{thm:2}. Based on this, we will henceforth assume that $\PP(L)\cap X=\emptyset$ and $2\ell<d-2\rho_g$. 
Let
\[
\{P_{ij}\}_{\substack{1\leq i \leq m\\ 1\leq j\leq r_i}}
\]
be the finite set of points of $X$ such that $L$ intersects a tangent or a secant line through each $P_{ij}$. We have indexed the $P_{ij}$ such that the secant line between $P_{ij}$ and $P_{i'j'}$ intersects $L$ if and only if $i=i',j\neq j'$.
Let $C\subset \PP^n$ be the image of $X$ obtained by projecting from $L$. Then the  
singularities $Q_i$ of $C$ are indexed by $i=1,\ldots,m$.
We will refer to the points $P_{ij}$ as the \emph{ramification points} of $X$.

The following theorem implies statement  (\ref{item:mult}) of Theorem~\ref{thm:2}.

\begin{thm}\label{mainthmproject}
Assume that $\ell=\dim L\leq 3$ and $2\ell < d-2\rho_g$. 
Then the singularity type of the point $Q_i\in C$ is determined from the conditions of Proposition~\ref{prop:linear} applied to the function	
\begin{align*}
		\lambda':\ZZ_{\geq0}^{r_i}&\to \ZZ_{\geq 0}\\
					\alpha&\mapsto\dim L\cap \F^\alpha(P_{i1},P_{i2},\ldots,P_{ir_i}), 
\end{align*}
and the classification in Proposition~\ref{prop:sing}.
\end{thm}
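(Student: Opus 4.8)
The plan is to localize the problem at a single singular point $Q_i$ and then apply Propositions~\ref{prop:linear} and~\ref{prop:sing}, so that the purely linear-algebraic datum $\dim(L\cap\F^\alpha)$ dictates the ring-theoretic singularity type. First I would fix $i$ and replace the global setup of \S\ref{sec:rsing} by its single-point version: write $S_i=\prod_{j=1}^{r_i}\KK[[t_{ij}]]$ for the normalization of $R_i=\widehat{\CO}_{C,Q_i}$, pick a section $s\in M$ vanishing at none of the $P_{ij}$ (possible since $M$ is basepoint free), and set $R_i'=(1/s)M\subset S_i$, with $R$ the subalgebra of $S_i$ generated by $R_i'$. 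There are three gap functions to keep track of: $\lambda_{R_i'}$, $\lambda_R$, and $\lambda_i=\lambda_{R_i}$, the last of which encodes the singularity type by Proposition~\ref{prop:sing}.

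The heart of the argument is a chain of three identifications. First, running Lemma~\ref{lemma:mu} and Corollary~\ref{cor:l1} for the one-point multifiltration $\F^\bullet(P_{i1},\dotsc,P_{ir_i})$ gives $\lambda_{R_i'}(\alpha)=|\alpha|-\dim\F^\alpha+\dim(L\cap\F^\alpha)$, so by the equality case of Lemma~\ref{lemma:bound} we get $\lambda_{R_i'}(\alpha)=\dim(L\cap\F^\alpha)=\lambda'(\alpha)$ whenever $|\alpha|\le d+1-2\rho_g$; thus the function $\lambda'$ of the statement is exactly the finite-dimensional gap function $\lambda_{R_i'}$ in that range. Second, $R$ is dense in $R_i$, and since each ideal $\langle t_{i1}^{\alpha_1},\dotsc,t_{ir_i}^{\alpha_{r_i}}\rangle$ contains a power of the maximal ideal $\mfm$ of $S_i$, passing to the completion leaves all quotients $S_i/(R+\langle t^\alpha\rangle)$ unchanged; hence $\lambda_R=\lambda_{R_i}=\lambda_i$. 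Third, $R_i'$ contains the unit $s/s=(1,\dotsc,1)$, and because every $P_{ij}$ maps to the single point $Q_i$ the sections of $M$ cannot separate the branches, so the image of $R_i'$ in $S_i/\mfm\cong\KK^{r_i}$ is the one-dimensional diagonal; this yields $\lambda_{R_i'}(1,\dotsc,1)=r_i-1$ and, with $\lambda_{R_i'}(e_j)=0$, shows $\lambda_{R_i'}$ is standard (alternatively invoke Lemma~\ref{lem:stronger_for_vs}).

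To control the degree I would use the genus bound: Lemmas~\ref{lemma:bir} and~\ref{lemma:sec} make $\phi$ birational with $M$ basepoint free, so Theorem~\ref{thm:b1} gives $\sum_i\delta(Q_i)=\rho_a-\rho_g\le\ell\le 3$ and in particular $\delta(\lambda_R)=\delta(Q_i)\le 3$. All hypotheses of Proposition~\ref{prop:linear} are now in place, so $\lambda_i=\lambda_R$ is the classified gap function whose row of Table~\ref{table:vs} is satisfied by $\lambda'=\lambda_{R_i'}$, and Proposition~\ref{prop:sing} then reads off the singularity type of $Q_i$. One bookkeeping point must be checked: every entry $\lambda'(\alpha)$ inspected in the relevant rows of Table~\ref{table:vs} must lie in the range $|\alpha|\le d+1-2\rho_g$ where $\lambda'$ equals $\dim(L\cap\F^\alpha)$. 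A direct inspection of the table shows these exponents satisfy $|\alpha|\le 2\delta(Q_i)+2\le 2\ell+2\le d+1-2\rho_g$, the last inequality coming from $2\ell<d-2\rho_g$.

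The main obstacle is precisely this bookkeeping across the three levels $\lambda_{R_i'}$, $\lambda_R$, $\lambda_i$: one must verify both that the subalgebra generated by $R_i'$ computes the gap function of the \emph{completed} local ring $R_i$, and that the linear-algebra invariant $\dim(L\cap\F^\alpha)$ agrees with $\lambda_{R_i'}$ on exactly the finite set of exponents that Table~\ref{table:vs} inspects; everything else is an application of results already established. Finally, the conclusion should be phrased as ``determined up to the two ambiguous rows'' {\bf 2.1.b}/{\bf 3.1.d} and {\bf 2.2.a}/{\bf 3.2.f} of Table~\ref{table:vs}, in accordance with the caveat of Remark~\ref{rem:caveat}: on these rows $\lambda'$ alone cannot distinguish the two singularity types, which differ in singularity degree.
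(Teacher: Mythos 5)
Your proposal is correct and follows essentially the same route as the paper's own proof: bound $\delta(Q_i)\leq\ell\leq 3$ via Theorem~\ref{thm:b1}, identify $\lambda'$ with the gap function of $(1/s)M$ on the range $|\alpha|\leq d+1-2\rho_g$ using Corollary~\ref{cor:l1}, verify standardness from the fact that all $P_{ij}$ map to $Q_i$, check that the exponents inspected in Table~\ref{table:vs} lie in that range, and conclude by Propositions~\ref{prop:linear} and~\ref{prop:sing}. Your added detail on why the completion of the algebra generated by $(1/s)M$ recovers $R_i$, and the explicit nod to the two ambiguous rows of Remark~\ref{rem:caveat}, are consistent with (and slightly more explicit than) the paper's argument.
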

\begin{proof}

Theorem~\ref{thm:b1} guarantees that $\delta = \rho_a-\rho_g$ is bounded by $\ell\leq 3$.
The same bound holds for the values of the gap function $\lambda_i$ for $R_i$, the completion of the local ring at $Q_i$.

By Corollary~\ref{cor:l1}, the function $\lambda'$ defined in the statement agrees with the gap function $\lambda_{(1/s)M}$ of $(1/s)M$ in $S_i$ for all $\alpha$ with $|\alpha|\leq d+1-2\rho_g$.  Note that the assumptions imply $d+1-2\rho_g\geq 2\ell +2\geq 2\delta +2$.

The gap function of $(1/s)M$ in $S_i$ is a standard gap function: indeed, since $s\in M$,\
   we must have $\lambda_{(1/s)M}(1,\dotsc,1) \le r_i-1$ and $\lambda_{(1/s)M}(e_j)=0$ for all $j$. 
But also any element $f/s$ of $(1/s)M$ whose valuation is not $(0,....0)$ 
  must have valuation $(\alpha_1....\alpha_r)$ where no $\alpha_k=0$. 
Otherwise the section $f$ would vanish at some $P_{ij}$ but not at others, contradicting the assumption that the $P_{ij}$ all map to the same $Q_i$.
Thus also  $\lambda_{(1/s)M}(1,\dotsc,1) \ge r_i-1$.

In Table~\ref{table:vs}, the conditions for $\lambda'(\alpha)$ are within the range $|\alpha|\leq 2\delta + 2$. Moreover, the completion of the algebra generated by $(1/s)M$ in the ring $S_i$ coincides with $R_i$, since the curve $C$ is parametrized by the functions in $M$. Hence, applying Proposition~\ref{prop:linear} gives the statement. The possible singularity types are classified in Proposition~\ref{prop:sing}. 
\end{proof}

\begin{rem}[Caveat and higher dimensions]\label{rem:caveat}
	We note a caveat to Theorem~\ref{mainthmproject}: the result does not let us differentiate between the classes {\bf 2.1.b} (a rhamphoid cusp) and {\bf 3.1.d} (a $(2,7)$ cusp).
	Likewise, we cannot differentiate between {\bf 2.2.a} and {\bf 3.2.f}.

	One might wonder what can be said when $\dim L> 3$. In principle, the same classification presented in \S\ref{sec:classify} can be carried out for $\delta >3$, however it will be much more complicated. Furthermore, the situation in the caveat mentioned above will occur much more frequently: 
	 we will not be able to differentiate between many different classes of singularities by looking only at gap functions corresponding to vector spaces (as opposed to those of local rings).
	Finally, for more complicated singularities, classification via gap functions of their local rings will lead to non-discrete classes of singularities.
	For example, a planar quadruple point whose four branches have  pairwise linearly independent tangent directions occurs in a one-dimensional family.
\end{rem}
\subsection{Schubert Conditions}\label{sec_schubert_conditions}
As in Theorem~\ref{mainthmproject}, let $1\leq \ell\leq 3$ with $2\ell<d-2\rho_g$; we also assume $n>2$. Additionally, let $U\subset G(\ell,V)$ 
be the locus of $\ell$-planes such that $\PP(L)$ does not meet  $X$.
For each $i=1,\ldots,m$, we fix a singularity type with singularity degree $\delta_i$.
By Theorem~\ref{thm:b1}, we may assume that  $\sum_{i=1}^m \delta_i\leq \ell$, and hence $m\leq 3$.
In the rest of the section, we analyze the locus of those linear spaces $L$ in $U$ whose corresponding projection gives rise to such singularities. 

To achieve this, for each $i=1,\ldots, m$, we first fix distinct ramification points $P_{i1},\ldots,P_{i{r_i}}$ of $X$; these will be the points forming the fiber
at $Q_i$ of the projection.

For fixed $i$, we consider the conditions in Table~\ref{table:vs} applied to the function
\begin{align}\label{equ:define_lambda'_from_multifiltration}
\lambda':\alpha\mapsto \dim L\cap \F^\alpha(P_{i1},P_{i2},\ldots,P_{ir_i}).
\end{align}
These give a locally closed condition in $G(\ell,V)$. Removing the open conditions, one may check case by case that the closure of this locus is a Schubert variety 
\[\mfS_i=\mfS_i(P_{i1},\ldots,P_{ir_i})\] for a flag obtained from subspaces of the form $\F^\alpha(P_{i1},P_{i2},\ldots,P_{ir_i})$.
By inspecting Table~\ref{table:vs},  one finds that only linear spaces of  dimension at most $2\delta_i$ are required to be chosen when defining $\mfS_i$; the rest
can be chosen arbitrarily. 

As in the proof of Theorem~\ref{mainthmproject}, 
   for any $L\in U \subset G(\ell,V)$ 
   the function $\lambda'$ defined in \eqref{equ:define_lambda'_from_multifiltration} 
   agrees with the gap function $\lambda_{(1/s) M}$ for small values of $\alpha$.
If $\lambda'$ satisifes any  of the conditions from Table~\ref{table:vs},
   we must have that $\lambda_{(1/s) M}$ is a standard gap function by Lemma~\ref{lem:stronger_for_vs}.
In particular, $\phi$ maps all points $P_{i1},P_{i2},\ldots,P_{ir_i}$ to a single point $Q_i \in C$.
   Assuming that no other points $P\in X$ map to $Q_i$, Proposition~\ref{prop:linear} implies that the singularity of $C$ at $Q_i$ has the corresponding type as listed in Table~\ref{table:vs}.

The partitions corresponding to the Schubert variety $\mfS_i$ 
   are found to be those featured in Tables~\ref{table:strat1} and~\ref{table:strat2};
The codimension of $\mfS_i$ in $G(\ell,V)$ is the size of this partition. 
Furthermore, it is straightforward to check that the closed locus being removed from $\mfS_i$ by the open conditions is a proper subset, being contained in the union of a finite number of Schubert varieties $\mfS_i'$, all properly contained in $\mfS_i$.
Again, inspecting Table ~\ref{table:vs}, one finds that in order to define these loci $\mfS_i'$, we only need to specify linear spaces in the flag up to dimension $2\delta_i+2$. 
Denote by $\mfS_i^\circ$ the open set of $\mfS_i$ where the conditions on $\lambda'$ are fulfilled.

We illustrate this with two examples:
\begin{ex}[{\bf 2.3}]
The closed condition in this case is that 
\[
\dim L\cap \F^{(1,1,1)}(P_{i1},P_{i2},P_{i3})\geq 2.
\]
Consider any flag of $V$ beginning with  
\[
0\subset \F^{(1,0,0)}(P_{i1},P_{i2},P_{i3})
\subset \F^{(1,1,0)}(P_{i1},P_{i2},P_{i3})
\subset \F^{(1,1,1)}(P_{i1},P_{i2},P_{i3})
\subset\ldots.
\]
The locus of $G(\ell,V)$ given by the closed condition above is exactly the Schubert variety $\mfS$ corresponding to the partition
\[
\textrm{\tbc}
\]
with respect to this flag.

The locus that we are removing by the open condition is the locus where
\[
\dim L\cap \F^{(2,2,2)}(P_{i1},P_{i2},P_{i3})\geq 3.
\]
However, this is the Schubert variety $\mfS'$ corresponding to the partition
\[
\textrm{\tbcf}.
\]
and so it is a proper subset.
\end{ex}
\begin{ex}[{\bf 3.2.e}]
The closed condition in this case is that 
\begin{align*}
\dim L\cap \F^{(1,1)}(P_{i1},P_{i2})\geq 1;\\
\dim L\cap \F^{(2,1)}(P_{i1},P_{i2})\geq 2;\\
\dim L\cap \F^{(4,2)}(P_{i1},P_{i2})\geq 3.
\end{align*}
Consider any flag of $V$ beginning with  
\begin{align*}
0&\subset \F^{(1,0)}(P_{i1},P_{i2})
\subset \F^{(1,1)}(P_{i1},P_{i2})
\subset \F^{(2,1)}(P_{i1},P_{i2})\\
&\subset \F^{(2,2)}(P_{i1},P_{i2})
\subset \F^{(3,2)}(P_{i1},P_{i2})
\subset \F^{(4,2)}(P_{i1},P_{i2})
\subset\ldots.
\end{align*}
The locus of $G(\ell,V)$ given by the closed condition above is exactly the Schubert variety $\mfS$ corresponding to the partition
\[
\textrm{\tbcf}
\]
with respect to this flag.

The locus that we are removing by the open condition is the locus where
\[
\dim L\cap \F^{(3,2)}(P_{i1},P_{i2})\geq 3
\]
or
\[
\dim L\cap \F^{(4,1)}(P_{i1},P_{i2})\geq 3.
\]
Each of these is a Schubert variety $\mfS'$ corresponding to the partition
\[
\textrm{\tbce}.
\]
and so it is a proper subset. (In the second case we must modify the flag so it includes $\F^{(4,1)}$ instead of $\F^{(3,2)}$).
\end{ex}

For each $i$, let $\Delta_i \subset G(\ell, V)$ be the locus where for any point $P$ distinct from $P_{i1},\ldots,P_{ir_i}$, and for some $j$ we have $\dim L\cap \F^{(1,1)}(P, P_{ij})\geq 1$.
In other words, $\Delta_i$ is the set of those $L$ that intersect a secant line to $X$ passing through exactly one of the points $P_{i1},\ldots,P_{ir_i}$.
By Theorem~\ref{mainthmproject}, $(\mfS_i^\circ\setminus\Delta_i)\subset U$ is exactly the locus of those linear spaces $L\in U$ such that the $i$th singularity type arises with fiber $P_{i1},\ldots,P_{ir_i}$ under the projection corresponding to $L$.

\begin{lemma}\label{Schubertloci}
Assume $n>2$, $1\leq \ell\le 3$, and $2\ell < d -2\rho_g$. Then 
\[
\mfS = \bigcap_{i=1}^m   (\mfS_i^\circ\setminus\Delta_i)
\]
is non-empty, and of codimension equal to the sum of codimensions of the varieties $\mfS_i$ in $G(\ell,V)$.
\end{lemma}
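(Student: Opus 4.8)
The plan is to reduce everything to a single independence statement for the multifiltration subspaces at distinct points, furnished by Lemma~\ref{lemma:bound}, and then to combine a Schubert-calculus transversality argument (for the codimension) with an explicit construction (for non-emptiness), finally checking that the removed open conditions carve out strictly smaller loci. Throughout I will use that $\sum_{i=1}^m\delta_i\le\ell$ by Theorem~\ref{thm:b1}, so in particular $m\le 3$, and I write $c_i=\codim\mfS_i$.

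\textbf{Independence.} The closed conditions defining $\mfS_i$ only involve subspaces $\F^\alpha(P_{i1},\ldots,P_{ir_i})$ of dimension at most $2\delta_i$, and since these form a chain they all lie in a single block $E_i:=\F^{\beta_i}$ with $|\beta_i|\le 2\delta_i$. As $\sum_i|\beta_i|\le\sum_i 2\delta_i\le 2\ell<d-2\rho_g$, the concatenated multi-index over the disjoint point sets has total weight at most $2\ell\le d+1-2\rho_g$, so Lemma~\ref{lemma:bound} yields $\dim\langle E_1,\ldots,E_m\rangle=\sum_i\dim E_i$; that is, the $E_i$ are in direct sum, and any collection of the subspaces used to define the various $\mfS_i$ spans a subspace of the expected dimension. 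The crucial point is that there is room for two further units of weight: enlarging one block to weight $2\delta_i+2$ (as needed to describe the removed locus $\mfS_i'$) or adjoining one secant plane $\F^{(1,1)}(P,P_{ij})$ (as needed to describe $\Delta_i$) keeps the total at most $2\ell+2\le d+1-2\rho_g$, so independence persists. This slack is exactly what the hypothesis $2\ell<d-2\rho_g$ buys.

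\textbf{Codimension and a point of $\bigcap_i\mfS_i$.} Because the $E_i$ are independent, the partial flags in the $E_i$ cutting out the $\mfS_i$ extend to complete flags of $V$ in general position, whence by standard Schubert calculus (e.g.\ \cite[Chapter 4]{3264}) the intersection $\bigcap_i\mfS_i$ is proper: every component has codimension exactly $\sum_i c_i$. Inspecting Tables~\ref{table:strat1} and~\ref{table:strat2} one sees $c_i\le\delta_i n-1$, so $\sum_i c_i\le \ell n-m<\ell(n+1)=\dim G(\ell,V)$ and the intersection is positive-dimensional. For non-emptiness I would construct a point directly: inside each block choose $L_i\subset E_i$ of dimension $\delta_i$ satisfying the closed Schubert conditions for $\mfS_i$, and set $L=L_1\oplus\cdots\oplus L_m\oplus L_0$ for an arbitrary complement $L_0$ of dimension $\ell-\sum_i\delta_i$. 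Since these conditions are the lower bounds $\dim(L\cap F)\ge k$ with $F\subseteq E_i$, and $L\supseteq L_i$, they hold automatically, so $L\in\bigcap_i\mfS_i$.

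\textbf{Removing the open conditions.} It remains to see that $\mfS=\bigl(\bigcap_i\mfS_i\bigr)\setminus B$, with $B=\bigcup_i\bigl[(\mfS_i\setminus\mfS_i^\circ)\cup(\mfS_i\cap\Delta_i)\bigr]$, stays non-empty of codimension $\sum_i c_i$; for this I would show $B\cap\bigcap_j\mfS_j$ is a proper closed subset. Each locus $\mfS_i\setminus\mfS_i^\circ$ lies in a finite union of Schubert varieties $\mfS_i'\subsetneq\mfS_i$ of codimension strictly greater than $c_i$, described using one block enlarged to weight $\le 2\delta_i+2$; by the independence above these are in general position with respect to the remaining $\mfS_j$, so $\mfS_i'\cap\bigcap_j\mfS_j$ has codimension $>\sum_j c_j$. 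For $\Delta_i$, note that for each fixed point $P$ the condition $\dim\bigl(L\cap\F^{(1,1)}(P,P_{ij})\bigr)\ge 1$ is a Schubert condition of codimension $n$, again in general position with respect to $\bigcap_j\mfS_j$ by the two-units-of-slack independence; letting $P$ range over the one-dimensional family of points of $X$ shows $\Delta_i\cap\bigcap_j\mfS_j$ has dimension at most $\dim\bigl(\bigcap_j\mfS_j\bigr)-n+1$, strictly smaller since $n>2$. Hence $B\cap\bigcap_j\mfS_j$ is proper, so $\mfS$ is a non-empty dense open subset of $\bigcap_j\mfS_j$ of codimension $\sum_i c_i$.

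\textbf{Main obstacle.} The delicate part is the interface between the two halves: guaranteeing that the very special, non-generic flags coming from the multifiltration are nonetheless transverse enough for $\bigcap_i\mfS_i$ to be proper, and simultaneously that the removed loci $\mfS_i'$ and $\Delta_i$ meet this intersection in strictly smaller dimension. In particular the secant loci $\Delta_i$ must be handled with care, since the relevant plane $\F^{(1,1)}(P,P_{ij})$ shares the direction $V^1(P_{ij})\subset E_i$ with block $i$ and only the new direction $V^1(P)$ is independent. Both points hinge on pushing the independence of Lemma~\ref{lemma:bound} all the way to total weight $2\ell+2$ --- precisely the margin afforded by $2\ell<d-2\rho_g$ --- together with the hypothesis $n>2$, which forces the one-parameter family $\Delta_i$ to be genuinely lower-dimensional.
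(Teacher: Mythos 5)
Your argument is correct and follows the paper's proof in its essential structure: both hinge on the second part of Lemma~\ref{lemma:bound} to put the defining flags in relative general position (using that only subspaces of dimension at most $2\delta_i$, resp.\ $2\delta_i+2$ for the boundary loci, are prescribed, and that the totals stay below $d+1-2\rho_g$), both deduce that $\bigcap_i\mfS_i$ has the expected codimension, and both dispose of the removed loci $\mfS_i\setminus\mfS_i^\circ$ and $\Delta_i$ by the same kind of dimension count, with the family over $P\in X$ costing one dimension and $n>2$ supplying the needed margin. The one genuinely different step is non-emptiness of $\bigcap_i\mfS_i$: the paper computes the intersection product via the Pieri rule (exploiting that all but at most one $\mfS_i$ is a special Schubert class) and concludes non-emptiness from non-vanishing of the product, whereas you exhibit an explicit point $L=L_1\oplus\cdots\oplus L_m\oplus L_0$ with $L_i\subset E_i$ chosen along the nested chain of closed conditions. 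Your construction is more elementary and avoids any appeal to the structure of the partitions; the Pieri route has the advantage of not requiring one to verify that the closed conditions can be simultaneously realized inside each block, but both are valid given the independence of the $E_i$. Two small blemishes, neither of which affects the conclusion: the inequality $c_i\le\delta_i n-1$ fails if $c_i$ denotes the codimension of $\mfS_i$ itself rather than of the $r_i$-parameter family (e.g.\ case {\bf 2.1.a} has $\codim\mfS_i=2n$), though $c_i\le\delta_i n$ still gives positive-dimensionality; and the claim that the secant condition $\dim\bigl(L\cap\F^{(1,1)}(P,P_{ij})\bigr)\ge 1$ is ``in general position'' with codimension $n$ relative to $\mfS_i$ is not literally right because of the shared direction $V^1(P_{ij})$ --- the paper instead bounds the extra codimension by that of the repeated bottom row, getting only $n-1$ for fixed $P$ --- but you flag this yourself, and the weaker bound still yields a proper closed subset after varying $P$ since $n>2$.
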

\begin{proof}
Since $\sum \delta_i\le 3$, by Theorem~\ref{thm:b1} we must have $\delta_i=1$ for all indices $i$ except at most one. As noted above, the flag we need to define each $\mfS_i$ is only determined up to dimension $2\delta_i$; its higher dimension subspaces may
be chosen arbitrarily. 

Consider the intersection $\bigcap_i \mfS_i$. Since all of the varieties $\mfS_i$, except possibly one, correspond to {\bf 1.1} or {\bf 1.2}, we may calculate their intersection product using the {\it Pieri rule} (possibly repeatedly). Inspecting Tables~\ref{table:strat1} and~\ref{table:strat2} shows that the intersection product is non-zero, hence these Schubert varieties have non-trivial intersection.

Since $\sum \delta_i\leq 2\ell \leq d+1-2\rho_g$, the second part of Lemma~\ref{lemma:bound} states that the $m$ defining flags may be taken to lie in relative general position. Hence, the intersection of the $\mfS_i$ has the expected dimension, i.e., its codimension is the sum of the codimensions.
This implies the statement once we establish that the intersection of the loci $(\mfS_i^\circ\setminus\Delta_i)$ is in fact non-empty.

To see this, we will momentarily focus on $\mfS_i^\circ$ for $i=1$, and then permute indices.
The complement of $(\mfS_1^\circ\setminus\Delta_1)$ in $\mfS_1$ is contained in the union of the following varieties: 

\begin{enumerate}[(i)]
\item\label{i:1}
 Schubert varieties obtained by changing the open conditions of Table~\ref{table:vs} to closed ones (for $i=1$);
 \item\label{i:2} Loci obtained by requiring the conditions for $\mfS_1$ as well as \[\dim L\cap \F^{(1,1)}(P,P_{1j})\geq 1\] for some $j=1,\ldots,r_1$ and  $P\in X$ distinct from 
	$P_{11},\ldots,P_{1r_1}$;
\end{enumerate}

The loci in (\ref{i:1}) are the boundaries of the open conditions described in Table~\ref{table:vs}; those in (\ref{i:2}) are $\mfS_1\cap \Delta_1$. 
We now analyze these loci more closely to show the statement. 

Type (\ref{i:1}). By inspection of Table~\ref{table:vs} we derive that the Schubert varieties $\mfS'$ arising from this condition are determined by specifying a flag only up to dimension $2\delta_1+2$. Since $\sum 2\delta_i+2\leq d+1-2\rho_g$, we may again use the second part of Lemma~\ref{lemma:bound} to be able to choose a flag for $\mfS'$ and flags for $\mfS_j$, $j\neq 1$, that are in relative general position. Therefore the locus
$
\mfS' \cap \bigcap_{j=2}^m \mfS_j
$
has the expected dimension; this is strictly smaller than the dimension of $\bigcap_{i=1}^m \mfS_i$. 

Type (\ref{i:2}). Suppose that $\mfS_1$ is specified by a flag where the largest dimensional linear space imposing a condition on $L$ is $\F^\alpha(P_{11},\ldots,P_{1r_1})$; we have $|\alpha|\leq 2\delta_1$. For fixed $P$ and $j$, this second type of locus is contained in a Schubert variety $\mfS'$ where we have specified all the conditions for $\mfS_1$, along with the condition
\[
\dim L\cap \F^{(\alpha,1)}(P_{11},\ldots,P_{1r_1}, P)>
\dim L\cap \F^{\alpha}(P_{11},\ldots,P_{1r_1}).
\]

If $\delta_1 = 3$, this is not possible, as $\dim L\cap \F^{\alpha}(P_{11},\ldots,P_{1r_1})$ is already $3$. Otherwise, the corresponding partition is the same as for $\mfS_1$, except that the bottom row is repeated. These Schubert varieties $\mfS'\subset \mfS_1$ are determined by specifying a flag only up to dimension $2\delta_1+1$. Again, by Lemma~\ref{lemma:bound} the flags for $\mfS'$ along with the flags for the Schubert varieties $\mfS_j$ for $j\neq 1$ can be chosen to be in relative general position. Hence the intersection of these Schubert varieties has the expected dimension; its codimension in $\bigcap_{i=1}^m \mfS_i$ is at least $n-1$. Letting $P\in X$ vary, we obtain a locus of codimension at least $n-2>0$, again a proper closed subset.

In conclusion, removing the loci (\ref{i:1}) and (\ref{i:2}) from $\mfS_1$ produces an open subset 
   $U'= \mfS_1^\circ\setminus\Delta_1 \subset \mfS_1$. 
   Above, we showed that the intersection of $U'$ with the varieties $\mfS_i$ for $i> 1$
must be non-empty.
Permuting the indices and taking the intersection, we see that
\[
\mfS = \bigcap_{i=1}^m (\mfS_i^\circ\setminus\Delta_i)
\]
must also be non-empty. This concludes the proof. 
 \end{proof}

Lemma~\ref{Schubertloci} shows that, for any configuration of singularities with $\sum_{i=1}^m \delta_i\leq \ell$, the locus of $U$ for which the induced projection has exactly these singularities with $P_{i1},\ldots,P_{ir_i}$ as ramification points is non-empty and of the expected dimension.
In particular, statement (\ref{item:sing}) of Theorem~\ref{thm:2} follows. However, in order to obtain the locally closed subvariety of $U$ leading to a fixed configuration of singularities, we have to allow the ramification points $P_{i1},\ldots,P_{ir_i}$ to vary. This gives us $r=\sum r_i$ parameters, leading to a family $\mathcal{Y}\subset U$ whose dimension is
\[
r+\dim \bigcap_{i=1}^m \mfS_i.
\]
Indeed, the family comes with a dominant morphism $\mathcal{Y}\to X^{(r)}$
obtained by mapping $L$ to the corresponding ramification points, where $X^{(r)}$ is the $r$-th symmetric product of the curve $X$. The fibers of this map are just unions of the 
\[
\bigcap_{i=1}^m (\mfS_i^\circ\setminus\Delta_i),
\]
obtained by permuting the points in the fiber at $Q_i$ of the projection from $L$.
Since the $\mfS_i$ intersect dimensionally transversally, we obtain that 
\[\dim \mathcal{Y}=\dim G(\ell,V)-\sum_{i=1}^m(\codim \mfS_i-r_i).\]
Statements (\ref{item:flags}) and (\ref{item:codim}) of Theorem~\ref{thm:2} then follow from the observation that the codimension of $\mfS_i$ is just the number of boxes in the corresponding partition.
The proof of Theorem~\ref{thm:2} is now complete.

\bibliographystyle{amsplain}
\bibliography{paper}
\end{document}